\documentclass[12pt]{amsart}
\usepackage{graphicx}
\usepackage{tabularx}
\usepackage{mathtools}
\usepackage{subcaption}
\usepackage{float}

\usepackage{mathrsfs}     
\usepackage{helvet}         
\usepackage{courier}        
\usepackage{type1cm}      

\usepackage{todonotes}
\usepackage{color}
\usepackage{url}

\usepackage{geometry,calc,color}
\usepackage{amsmath,amssymb}
\usepackage{mathabx}

\usepackage{enumerate}
\usepackage{arydshln}
\usepackage{siunitx}
\usepackage{booktabs}
\usepackage{multirow}
\usepackage{tikz}
\usepackage{pgfplots}
\usepackage{enumitem}

\usepackage{esint}

\usepackage{appendix}
\pgfplotsset{compat=1.9}

\newtheorem{theorem}{Theorem}[section]
\newtheorem{corollary}[theorem]{Corollary}
\newtheorem{lemma}[theorem]{Lemma}
\newtheorem{proposition}[theorem]{Proposition}
\theoremstyle{definition}

\newtheorem{problem}[theorem]{Problem}
\newtheorem{assumption}[theorem]{Assumption}
\newtheorem{commento}[theorem]{Comment}
\theoremstyle{remark}
\newtheorem{remark}[theorem]{Remark}

\numberwithin{equation}{section}

\newcommand{\roundPrecision}{2}

\long\def\NOTE#1{} 
 
\allowdisplaybreaks

\usepackage[final]{changes}
\definechangesauthor[name=daniele, color=blue]{dp}
\definechangesauthor[name=monica, color=red]{mm}
\definechangesauthor[name=silvia, color=teal]{sb}

\newcommand{\Kt}{\widetilde K}

\renewcommand{\epsilon}{\varepsilon}
\renewcommand{\phi}{\varphi}
\renewcommand{\theta}{\vartheta}
\newcommand{\p}{k} 

\newcommand{\GDh}{\GD_h}

\newcommand{\fh}{F_h}

\newcommand{\tauD}{\tau}
\newcommand{\tauN}{\widehat\tau}

\newcommand{\lra}{\longrightarrow}

\newcommand{\bEdges}{\mathcal{E}_h^\partial}

\newcommand{\tE}{\widetilde E}

\newcommand{\DEdges}{\mathcal{E}_D}

\newcommand{\PinablaK}{\Pi^ {\nabla}_K}

\newcommand{\n}{{n}}

\newcommand{\scrG}{\mathscr{L}}

\newcommand{\h}{h}
\newcommand{\mh}{h}

\newcommand{\Poly}[1]{\mathbb{P}_{#1}}

\newcommand{\hu}{\widehat u}

\newcommand{\Th}{\mathcal{T}_\h}

\newcommand{\Gh}{\mathcal{G}_\h}

\newcommand{\x}{{\mathbf x}}

\newcommand{\Pinabla}{\Pi^{\nabla}}

\newcommand{\dn}[1]{\partial_{\n} #1 }

\newcommand{\Vh}{V_\h}
 
\newcommand{\Oh}{\Omega_{\mh}}

\newcommand{\ThD}{\Th^D}

\newcommand{\dnh}[1]{\partial_{\nh}#1}

\newcommand{\pvE}{\pv^{E}}

\newcommand{\nK}{\nu_K}

\newcommand{\VEM}{V_h}
\newcommand{\VEMK}{V^K_k}

\newcommand{\ah}{A_\h}

\newcommand{\mE}{E}

\newcommand{\KE}{K(E)}

\newcommand{\VE}{V_E}
\newcommand{\cVE}{\widecheck V_\mE}

\newcommand{\cv}{\widecheck v}

\newcommand{\cu}{\widecheck u_h}

\renewcommand{\hu}{\widehat u_h}

\newcommand{\bK}{{\partial K}}

\newcommand{\ThN}{\Th^N}

\renewcommand{\O}{\Omega}

\newcommand{\switch}{\chi}

\newcommand{\E}{\mathcal{E}}
\newcommand{\G}{\Gamma}

\newcommand{\dist}{\mathrm{d}}

\newcommand{\delD}{\delta_D}
\newcommand{\delN}{\delta_N}

\renewcommand{\phi}{\varphi}

\newcommand{\pu}{\pi_u}

\newcommand{\pv}{\pi_v}
\newcommand{\nh}{n_h}

\renewcommand{\Gh}{\Gamma_h}
\newcommand{\GD}{\Gamma^D}
\newcommand{\GN}{\Gamma^N}
\newcommand{\GhD}{\Gamma_h^D}
\newcommand{\GhN}{\Gamma_h^N}
\newcommand{\etax}{\eta_{\x}}

\renewcommand{\E}{\mathscr{E}}
\newcommand{\XN}{\mathcal{X}^N}

\newcommand{\DK}{\Delta_E}

\newcommand{\jump}[1]{[#1]}

\newcommand{\DNnodes}{\mathcal{N}_{DN}}

\newcommand{\Norm}[1]{|\!|\!| #1 |\!|\!|_{\flat,\Oh}}
\newcommand{\Normup}[1]{|\!|\!| #1 |\!|\!|_{\sharp,\Oh}}

\newcommand{\tK}{n_{\DK}}
\newcommand{\hE}{\widehat{\mathscr{E}}}
\newcommand{\taux}{\nu_{\x}}
\newcommand{\tx}{\tilde{\x}}

\newcommand{\domain}{G}
\newcommand{\Besicovich}{\mathscr{B}}
\newcommand{\Besk}{\Besicovich_k}
\newcommand{\Beskj}{\Besicovich^j_k}
\newcommand{\one}{{\mathbf 1}}

\newcommand{\linears}{L^K}
\newcommand{\bubbles}{B^K_k}
\newcommand{\node}{\zeta_i}
\newcommand{\Linear}{\mathcal{L}}
\newcommand{\Bubble}{\mathcal{B}}

\renewcommand{\tilde}[1]{\widetilde{#1}}

\begin{document}

\title[]{The high order virtual element method on approximate domains:  Neumann boundary conditions}%

\author[S. Bertoluzza]{Silvia Bertoluzza}
\address{IMATI ``E. Magenes'', CNR, Pavia (Italy)}%
\email{silvia.bertoluzza@imati.cnr.it}%

\author{Monica Montardini}
\address{Dipartimento di Matematica "F. Casorati", Universit\`a di Pavia}
\email{monica.montardini@unipv.it}

\author[D. Prada]{Daniele Prada}
\address{IMATI ``E. Magenes'', CNR, Pavia (Italy)}%
\email{daniele.prada@imati.cnr.it}%

\date{\today}
\thanks{This paper was funded by the MIUR Progetti di Ricerca di Rilevante Interesse Nazionale (PRIN) Bando 2020, grant 20204LN5N5, and Bando 2022 PNRR, grant P2022BH5CB (funded by the European Union - NextGeneration EU). \replaced[id=mm]{The first and second authors are member of the INdAM -- GNCS Research Group.}{The authors are member of the INdAM -- GNCS Research Group.} }%
\subjclass{}%
\keywords{Virtual element method, shifted boundary method, curved geometry, Neumann boundary conditions}%

\begin{abstract} 
In the framework of the virtual element method with shifted boundary type treatment of  curved geometries, we propose a novel approach to the treatment of Neumann boundary conditions that does not require the approximate aligning of the physical and numerical normal directions and is therefore well suited to handle domain approximations obtained by agglomeration from underlying fine structured grids. For such an approach we give conditions under which we are able to prove optimal error estimates for arbitrary orders of the virtual element discretization.
\end{abstract}
\maketitle

\section{Introduction}
In recent papers \cite{bertoluzza2024virtual,VEM_curvo,VEM_weakly} it has been shown that a combination of the Virtual Element Method (VEM) \added[id=mm]{\cite{3DVEM,basicVEM,hitchVEM}} with a boundary correction strategy \replaced[id=dp]{inspired by }{of} the {Shifted Boundary Method (SBM)}  \cite{SBM1}   allows to  obtain approximation of arbitrarily high order for Dirichlet problems set on domains with piecewise smooth boundaries, and solved on suitable polytopal  approximating domains. The VEM is, among the polytopal approximation methods that have\deleted[id=dp]{, in the last decade,} risen to the attention of the scientific community \added[id=dp]{in the last decade}, the closest on to being an extension of the classical finite element method to polytopal tessellations. As in the finite element method (FEM), the VEM relies, for second order elliptic equations, on a subspace of $H^1$ obtained by continuously gluing local spaces defined on each element of the tessellation. Unlike the FEM, the local basis functions are not available in closed form, and computations of the quantities needed for designing the discretization scheme are carried out approximately by resorting to suitable local projections onto polynomials. \added[id=dp]{Virtual elements have gathered an increasing interest for their flexibility in the definition of local element spaces that offers advantages in many complex problems, their robustness with respect to the shape of the elements, and the possibility to use arbitrary order of accuracy \added[id=mm]{\cite{3DVEM2}}. \added[id=mm]{Numerous works use VEM to deal with different kind of equations \cite{Antonietti_VEM_Stokes,Antonietti_VEM_Cahn,ABPV:minsurf,VEM_second_order,BEIRAODAVEIGA2021,navier-stokes2d,beirao_Navier_Stokes,Moraetal15,
perugia_Helmholtz,beirao_parab} and with  different formulations \cite{curved_Trefftz,de2016nonconforming,VEM_mixed,
BMPP_VEM_nonconforming_stab,MASCOTTO_Trefftz}. Indeed,} the range of applications is really vast: compressible and nearly incompressible materials \added[id=mm]{\cite{beirao_linear_elasticity,VEM_3D_elasticity}}, electromagnetics \added[id=mm]{\cite{BEIRAODAVEIGA2021}}, fluid dynamics \added[id=mm]{\cite{VEM_discrete_fracture}}, mesh adaptation to deal with singularities \added[id=mm]{\cite{beirao_hp_exponential}}, arbitrarily regular elements \added[id=mm]{\cite{da2014virtual}}, just to name a few. We refer to~\cite{vem_applications, beirao_acta2023} for further references}.

Initially designed under the assumption that the domain of definition of the continuous problem is a polygon or polyhedron, the VEM has been extended to domains with piecewise smooth curved boundaries by different approaches \cite{VEM_curved_beirao, VEM_curved_brezzi, prada2026virtual}. We focus here on boundary corrections techniques, such as the ones underlying the \replaced[id=dp]{SBM }{shifted boundary method (SBM)} \added[id=dp]{\cite{SBM1,SBM2,SBMmechanics, SBMfreesurface, SBMhyperbolic, SBMreduced}}. In such a method the domain of the problem is approximated by a polytope and the discrepancy between the two domains is taken into account when imposing Dirichlet boundary conditions,  by resorting to a Nitsche type strategy including a correction term that involves the evaluation on the physical boundary, by means of Taylor extrapolation, of the trial functions.  This strategy  has successfully been adapted to the VEM in \cite{VEM_curvo,VEM_weakly,hou2024high} and in \cite{bertoluzza2024virtual}. In particular, the latter paper focuses on the combination of VEM and boundary correction strategies for the solution of problems on domains with curved boundary approximated by polygonal tessellations with elements obtained as agglomerations of squares out of a uniform structured mesh. The use of polygonal elements allows to tune the ratio between the distance from the approximated to the physical boundary and the diameter of the element, which, in turn, allows to obtain optimal error bounds for the VEM of arbitrarily high order.

In the present paper we deal with the extension of such a method to the case of mixed Dirichlet/Neumann boundary conditions. The treatment of Neumann boundary \replaced[id=mm]{conditions}{condition} within the SBM and other boundary correction \added[id=dp]{methods} has been, until not long ago, an open problem. Different approaches were proposed in the literature. When the normal direction to the approximated boundary is close to the corresponding normal to the physical boundary, suitable boundary correction terms via Taylor extrapolation give satisfactory result, while otherwise the solution proposed in the literature is to resort, for the elements on the Neumann boundary, to mixed method \added[id=mm]{\cite{SBMmechanics}}, thus reducing the natural boundary condition to an essential boundary condition on the flux to be treated analogously to the Dirichlet boundary condition. A new approach has been recently proposed in \cite{collins2026gap}, that allows, in the order one case, to efficiently handle such boundary conditions even when the physical and numerical normals are not closely aligned, by cleverly recasting a numerical \replaced[id=mm]{quadrature }{quadrarure} for the ``gap'' domain between the two boundaries as a functional defined on the numerical boundary, thus remaining in the spirit of the shifted boundary method.  In this paper we  propose an alternative for which, in the   framework of the VEM, we are able to provide a full theoretical analysis for \deleted[id=dp]{methods of} arbitrarily high order \added[id=dp]{of accuracy}.

The paper is organized as follows. In Section \ref{sec:1} we present the new formulation and state our main result, namely Theorem \ref{thm:main}, that we prove in Section \ref{sec:proof}. Also in Section 2, \added[id=dp]{we present a novel stabilization strategy that is particularly well suited for the present framework} and recall the static condensation procedure originally introduced in \cite{bertoluzza2024virtual}, which plays an even greater role when dealing with Neumann boundary \replaced[id=mm]{conditions}{ condition}. In Section \ref{sec:numerical} we present numerical tests.

	\section{The virtual element method on domains with curved boundary} \label{sec:1}
	We let $\O\subset \mathbb{R}^2$ denote a piecewise smooth domain with (possibly curved) boundary $\G$, and outer unit normal $\n$, and we consider the mixed boundary value problem
	\begin{equation}\label{contpb}
		- \Delta u = f \ \text{ in }\O, \qquad u = g^D \ \text{ on }\GD, \qquad  \qquad \nabla u \cdot \n = g^N \ \text{ on }\GN,
	\end{equation}
  where $\GD$ and $\GN$ \deleted[id=mm]{ with $\widebar \GD \cup \widebar \GN = \G$ and $\GD \cap \GN = \emptyset$ } are the  Dirichlet and  Neumann portions of $\G$, respectively, \added[id=mm]{that satisfy    $\overline \GD \cup \overline \GN = \G$ and $\GD \cap \GN = \emptyset$}.
	
	\
	
We consider	a family \(\{\Oh\}\) of polygonal computational domains, depending on a meshsize parameter $h$, with, for all $h$, $\Oh\subseteq \O$. Each approximate domain is endowed with a polygonal tessellation $\Th$, which, for the sake of simplicity, we assume to be quasi uniform with mesh size $h$. We let $\Gh$ and $\nh$ respectively denote the boundary and \added[id=mm]{the} outer unit normal to the domain $\Oh$,
and we let $\bEdges$ \replaced[id=mm]{denote}{ denotes} the set of edges of $\Th$ lying on the boundary of $\Oh$. 
In the following it will be convenient to introduce the concept of  ``macro edges'',  defined as the connected components of either $\partial K \cap \partial K'$  or of $\partial K \cap \partial \Oh$, $K, K'\in \Th$. The common vertexes of two or more macro edges will be referred to as ``macro vertexes''. We make the following assumption on the tessellation.
	\begin{assumption}\label{assmp:shapereg} We assume that all the elements $K$ of $\Th$ satisfy the following shape regularity assumption
	\begin{enumerate}
		\item $K$ is star shaped with respect to all points of a ball of radius $\simeq h$.
		\item The number of macro edges of $K$ is bounded by a constant independent of $h$.
	\end{enumerate}	
\end{assumption}
	
	\
	
 	For all points $x\in \Gh$ on the boundary of the polygonal  computational domain we choose an extrapolation direction, represented by an outer unit vector $\sigma(x)$.  We let $\delta(x)\geq 0$ denote the smallest positive number such that $\widetilde x = x + \delta(x) \sigma(x) \in \G$. \added[id=sb]{For each boundary element $K$ we let $\delta_K = \max_{x \in \bK \cap \Gamma} \delta(x)$.}
	For each macro vertex $\x$ we let $\tx = \x + \sigma(\x)\delta(\x)$ denote the corresponding point on the physical boundary, and we let $\etax$ denote the segment $(\x,\tx)$.
	We make the following assumptions. 
	
	\begin{assumption}\label{ass:smoothness}
		The function $x \to \sigma(x)$ is continuous and its restriction to each boundary edge of the tessellation is smooth. 
	\end{assumption}
	
\added[id=sb]{	\begin{assumption}\label{ass:microedges}
For all boundary edge $e$ it holds that
\[
h_e = | e | \gtrsim \delta_K, \qquad K\text{ such that }e \subset \partial K.
\]
	\end{assumption}}
	
	\begin{assumption}
		For each boundary macro edge $E$, letting \[\tE = \left\{\widetilde x:\  \widetilde x = x + \delta(x)\sigma(x), \ x \in E\right\},\] we have either $\tE \subset \GD$ or $\tE \subset \GN$. 
	\end{assumption}
	Thanks to such an assumption, the computational boundary $\Gh$ can be split as $\Gh = \GhD \cup \GhN$,  the union being non overlapping,  where $\GhD$ and $\GhN$ are the union of those macro edges $E$ such that $\tE$ is, respectively, in $\GD$ or $\GN$. We let \[\delD = \max_{x\in \GhD} \delta(x)\qquad\text{ and }\qquad \delN = \max_{x\in \GhN} \delta(x).\]

\begin{figure}
	\includegraphics[width=7cm]{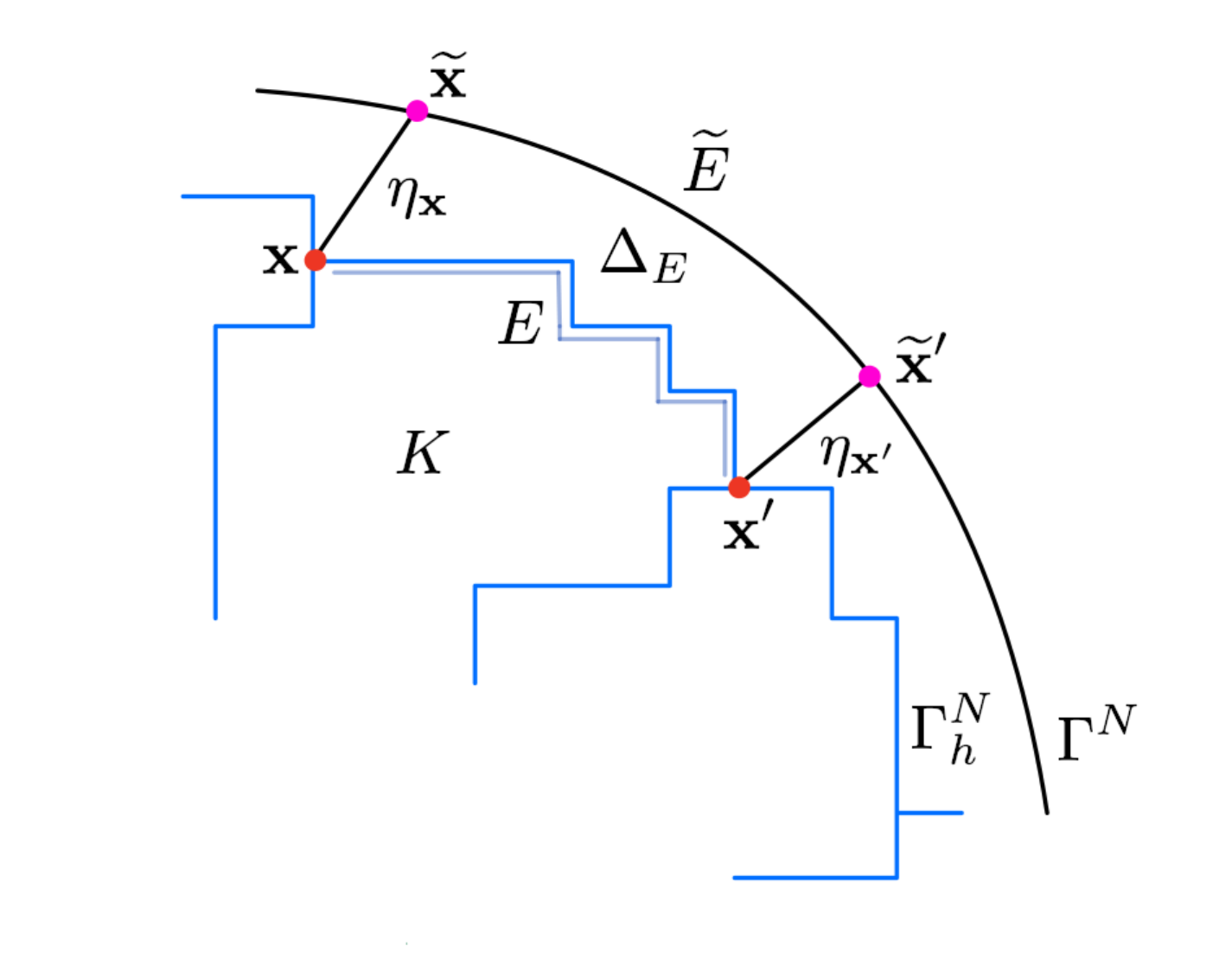}
	\includegraphics[width=7cm]{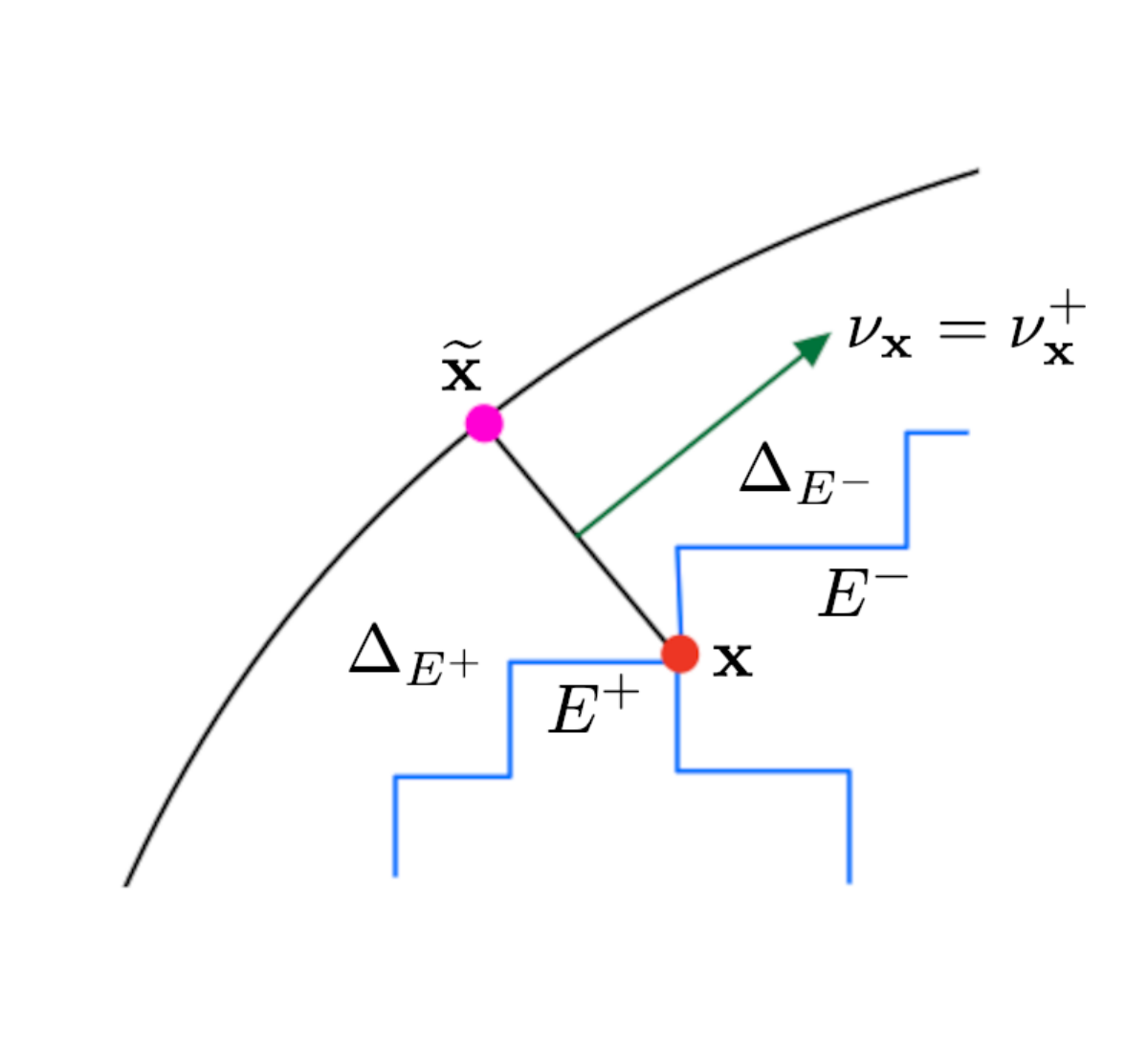}
	\caption{The different geometrical entities involved in the definition of the method, and the related notation.}\label{fig:geometry}
\end{figure}	

Corresponding to the splitting of 	the approximate boundary $\Gh$ into the union of macro edges $E$, the thin region between $\Gh$ and $\G$ can be split into subregions $\DK$, with $\DK$ defined as the bounded region with boundary $E \cup \tE \cup \etax \cup \eta_{\x'}$, $\x$ and $\x'$ being the macro vertexes of $E$ (see Figure \ref{fig:geometry}). 	For each macro vertex $\x$  we choose a  direction $\taux$ normal to $\etax$, and we mark the two macro edges sharing $\x$ as a macro vertex as $E^+$ and $E^-$, in such a way that $\taux^+ = \taux$ and $\taux^- = -\taux$ are pointing outwards of $\Delta_{E^+}$ and $\Delta_{E^-}$, respectively. To simplify the theoretical exposition, if $\x$ is one of the macro vertex that \replaced[id=mm]{separates}{ separate} $\GhD$ from $\GhN$, 	we take care to choose $E^+$ in the Neumann region.

\begin{remark}	Assumption \ref{ass:smoothness} can be relaxed. Indeed, continuity of $\sigma$ is only required on the Neumann portion of the boundary.
	On the Dirichlet boundary $\sigma$ can be taken piecewise constant on each edge of the polygonal approximate domain, as proposed in \cite{VEM_curvo}. We observe that  smoothness on Dirichlet edges is needed only for implementation, since Dirichlet boundary correction terms  requires numerical integration of quantities involving $\sigma$ and $\delta$. On the other hand, the implementation of the Neumann boundary terms only require to construct $\sigma$ at macro vertexes, while $\sigma$ at points interior to macro edges is only required for carrying out the theoretical analysis.
	\end{remark}

	\subsection{The virtual element method on $\Oh$} It is out of the scope of this paper to give a full presentation of the virtual element method, for which we refer to \cite{hitchVEM,basicVEM}. Here, we simply introduce the notation and definitions that we will need later on.	We will consider an order $k$ virtual element method. For the sake of simplicity we will focus on the plain definition of the VEM space, but the results that we will present also hold for the so called enhanced VEM space (see \cite{3DVEM}). For the polygonal elements $K \in \Th$ we let
	\[
	\VEMK = \left\{
	v \in C^0(K), v|_e \in \Poly{k}(e)\text{ for all edge $e$ of $K$}, \ -\Delta v \in \Poly{k-2}(K)
	\right\},
	\]
	where, for $D$ one- or two-dimensional domain, $\Poly{k}(D)$ denotes the  restriction to $D$ of the space of bi-variate polynomials of order up to $k$. As usual we conventionally write $\Poly{-1}(D) = \{0\}$.
	We define 
	\[
	V_h = \left\{ v \in H^1(\Oh),\ v|_K \in \VEMK\text{ for all $K \in \Th$}  \right\}.
	\]
	We let  $H^1(\Th)$ and $\Poly{k}(\Th)$ respectively denote the space of discontinuous piecewise $H^1$ functions and  the space of discontinuous piecewise polynomials of degree at most $k$ on the tessellation $\Th$. We let $| \cdot |_{1,\Th}$ and $\| \cdot \|_{1,\Th}$ respectively denote the broken $H^1$ semi norm and norm:
	\[
	| v |^2_{1,\Th} = \sum_{K} | u |_{1,K}^2, \qquad 	\| v \|^2_{1,\Th} = \sum_{K} \| u \|_{1,K}^2,
	\]
	where $| \cdot |_{1,K}$ and $\| \cdot \|_{1,K}$ denote the standard semi norm and norm for the Sobolev space $H^1(K)$. 
We let  $\PinablaK: H^1(K) \to \Poly{k}(K)$ and 
	$\Pinabla: H^1(\Th) \to \Poly{k}(\Th)$ be, as usual, respectively defined as
	\[\int_K \nabla \PinablaK v \cdot \nabla q = \int_K 
	\nabla v \cdot \nabla q\quad \text{for all }q \in \Poly{k}(K), \ \int_K \PinablaK v = \int_K v, \qquad
	\Pinabla(v)|_K = \PinablaK(v|_K).
	\]
  Moreover we let $\Pi^0_l: L^2(\Th) \to \Poly{l}(\Th)$ denote the $L^2(\Th)$ projection onto the space of discontinuous \replaced[id=mm]{piecewise}{ picewise} polynomials of degree at most $l$.
	It is out of the scope of this paper to detail  how the projectors $\PinablaK, \Pi^0_l$ are actually evaluated. We refer to \cite{hitchVEM} for an extensive discussion of the computability issue.
	
	As usual, to get control on the non polynomial component of the solution we will need to resort to stabilization, and to this aim we introduce  symmetric bilinear forms $s^K: \VEMK \times \VEMK \to \mathbb{R}$, $K \in \Th$, satisfying, for some constant $A^*(h) > \alpha_* > 0$
	\[
\alpha_*	| v_h |^2_{1,K} \leq s^K(v_h,v_h) \leq A^*(h)  | v_h |_{1,K}^2\qquad \text{for all }v_h \in \ker \PinablaK.
	\]
We assume the constant $\alpha_*$ to be independent of $h$, while, unlike what is usually assumed in the virtual element method analysis, it will be convenient to assume that $A^*(h)$ might depend on it (remark that, after possibly rescaling the stabilization bilinear form, this assumption is always satisfied). 	Different versions of $s^K$ have been proposed in the literature, {we refer to \cite{beirao_stab}} for a comprehensive discussion of various possibilities that can be found in the literature. Below we will also present a new option,  better suited to the present framework.

	\subsection{The discrete formulation} 	Given $v \in V_h$ we introduce the shorthand notation  $\pv = \Pinabla v$ to denote its polynomial component. 
	We treat the Dirichlet boundary conditions as proposed in \cite{VEM_weakly}. We then let 
	\[
	\E(u) = \sum_{j=0}^{k} \frac{\delta^j}{j!} \partial_\sigma^j u, \qquad \hE(u) = \sum_{j=0}^{\widehat k} \frac{\delta^j}{j!} \partial_\sigma^j u,
	\]
	denote two Taylor extrapolation operators in the $\sigma$ direction.
	Observe that for $x \in \Gh$ we have
	$\E(\pu) = \widetilde \pi_u$ where $\widetilde\pi_u(x) = \pu(\widetilde x)$. In essence, $\E(\pu)$ is a function defined on $\Gh$ obtained by ``transferring'' to $\Gh$, along the direction $\sigma$, the values of $\pu$ on $\G$, so that $\E(\pu)|_{\Gh}$ and $\pu|_\G$ carry the same information.
	
	\

To handle the Neumann boundary condition, we observe that $\pv$ can be naturally extended to a discontinuous piecewise polynomial  function defined on the whole domain $\O$, which, by abuse of notation, we will also call $\pv$,  by setting, for each boundary macro edge $E$,  \[\pv|_{\DK} = \pvE = \pv^{\KE}\quad \text{with $\KE$ such that } E \subset \partial \KE.\] 
	We let $\dnh{\pv}\in L^2(\G_h)$ and $\dn{\pv}\in  L^2(\G)$ be respectively defined macro edge by macro edge as
\[
\dnh \pv|_E = \nabla \pv^{\KE} \cdot \nh, \qquad \dn \pv|_{\tE} = \nabla \pv^{\KE} \cdot \n, \qquad \KE \text{ such that } E \subset \partial K.
\]

	\
	
	We then introduce the bilinear form $\ah  : \VEM \times \VEM \to \mathbb{R}$ defined as
		\begin{multline}
		\ah (u,v) =	\int_{\Th} \nabla \pu \cdot \nabla \pv  + \beta
		s_h(u-\pu,v-\pv) - \int_{\Gh} \dnh{\pu} \, v  \\+ \int_{\GhD}  \E(\pu) \dnh{\pv}   
		+ \gamma h^{-1} \int_{\GhD} \E(\pu) \hE (\pv)\\
		+ \int_{\GN} \dn{\pu}  \pv 
		+ \switch \sum_{\x  \in \XN}\int_{\etax}  [\nabla \pu\cdot \taux ]  \{ \pv \}.
	\end{multline}
	where \[[\nabla \pu \cdot \taux] = \nabla \pu^+ \cdot \taux^+ + \nabla \pu^- \cdot \taux^-,
	\qquad \{\pv\} = (\pv^+ + \pv^-)/2.
	\]
  Above, 
	the coefficients $\beta$ and $\gamma$ are stabilization parameters that one might need to adjust to ensure the stability of the method. The relevant values for the parameter $\switch$  are $1$ and $0$, its role being to switch on-off the last term in the definition of $a$, which is a correction term dealing with macro vertices which are off the physical boundary.  Letting
\[
  F_h(v) =  \int_{\Oh} f \Pi^0_{k-2} v + \int_{\GhD} \widetilde g^D (\dnh \pv + \gamma h^{-1} \hE(\pv) ) + \int_{\GN} g^N  \pv,
\]
we	consider the following discrete formulation.
		\begin{problem}\label{pb:discrete1}
		Find $u \in V_h$ such that for all $v \in V_h$ it holds
		\[
		\ah (u,v) = F_h(v).
		\]
	\end{problem}

%
%
%

The following theorem, which we will prove in Section \ref{sec:proof}, is the main theoretical result of the paper.
\begin{theorem}\label{thm:main}
There exists $\gamma_0$ and $\beta_0$ such that, if $\beta > \beta_0$ and $\gamma > \gamma_0$ the following holds: there exist $\tau_0$ depending on $\beta$ and $\gamma$,  such that, if the condition
\begin{equation}\label{condmesh}
\delta_D \leq \tau h, \qquad \delta_N \leq \tau h^{3/2}, 
\end{equation}
holds for $\tau < \tau_0$, then
Problem \ref{pb:discrete1} admits a unique solution $u_h \in \Vh$, and, if the solution $u$ of \eqref{contpb} satisfies $u \in H^{k+1}(\O)$, we have the following error estimate
\[
\Norm{u - u_h} \lesssim A^*(h) h^{k} | u |_{k+1,\O} + h^{-1/2} \delta^{k+1} \| u \|_{k+1,\infty,\O},
\]
where the discrete norm $\Norm{\cdot}$ is defined as
\begin{equation}\label{defnorm}
\Norm{v}^2 = | v |^2_{1,\Th} + h^{-1} \| v \|^2_{0,\GhD}.
\end{equation}
\end{theorem}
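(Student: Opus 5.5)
The proof will follow the Strang-type pattern of \cite{bertoluzza2024virtual,VEM_weakly}: first a discrete coercivity (inf-sup) estimate for $\ah$ in the norm $\Norm{\cdot}$, then a consistency estimate for a suitable interpolant of $u$.

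\textbf{Step 1: stability.} For $v\in\Vh$ we test with $v$ itself. The volume and stabilization terms give
\[
\int_{\Th}|\nabla\pv|^2+\beta s_h(v-\pv,v-\pv)\ge \min(1,\beta\alpha_*)|v|_{1,\Th}^2 .
\]
The Dirichlet terms are handled as in \cite{VEM_weakly}. We write $\E(\pv)=\pv+(\E(\pv)-\pv)$ and $\hE(\pv)=\pv+\dots$, and bound the Taylor remainders by $\delD h^{-1}\lesssim\tau$ times $|\pv|_{1}$, via inverse inequalities; this uses $\delD\le\tau h$ and Assumption \ref{ass:microedges}. The term $-\int_{\GhD}\dnh\pv\, v$ combines with $\int_{\GhD}\E(\pv)\dnh\pv$ to give only a remainder of size $\tau$. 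The term $\gamma h^{-1}\int \E(\pv)\hE(\pv)$ then controls $h^{-1}\|v\|^2_{0,\GhD}$, up to $\|v-\pv\|_{0,\GhD}^2\lesssim h|v-\pv|^2_{1}$, which is absorbed by the $\beta$-term once $\beta>\beta_0(\gamma)$.

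The Neumann part is the key new ingredient. We use the divergence theorem on each $\DK$ applied to $\pv$, extended polynomially:
\[
\int_{\tE}\dn\pv\,\pv-\int_E\dnh\pv\,\pv=\int_{\DK}\big(|\nabla\pv|^2+\Delta\pv\,\pv\big)-\sum_{\x}\int_{\etax}\nabla\pv\cdot\taux^\pm\,\pv .
\]
Summing over macro edges, the $\etax$ contributions pair into jumps $[\nabla\pv\cdot\taux\,\pv]$, and the $\switch$ term compensates the averaged part. What remains is a volume term, which is nonnegative or of size $|\DK|/|K|\lesssim\delN/h$, together with residual jump terms $\{\nabla\pv\cdot\taux\}[\pv]$ on segments of length $\lesssim\delN$. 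We estimate the latter by inverse and trace-type inequalities for polynomials on $\etax$. These give a factor $\delN h^{-3/2}\cdot h^{1/2}$ or $\delN^{1/2}h^{-1/2}\cdot\dots$. I expect the condition $\delN\le\tau h^{3/2}$ to be exactly what makes these terms $\lesssim\tau|v|^2_{1,\Th}$.

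The mismatch between $v$ and $\pv$ in $-\int_{\GhN}\dnh\pv\,v$ versus $\int\dnh\pv\,\pv$ is bounded by $|\pv|_1|v-\pv|_1$ via a trace inequality, and absorbed by $\beta$. Choosing $\tau_0$ small then yields $\ah(v,v)\gtrsim\Norm{v}^2$, which gives uniqueness, and hence existence by finite dimensionality.

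\textbf{Step 2: consistency.} Let $u_I\in\Vh$ be the VEM interpolant and $\pi$ a piecewise polynomial approximation of $u$, with $|u-u_I|_{1,\Th}+|u-\pi|_{1,\Th}\lesssim h^k|u|_{k+1}$. We set $e_h=u_h-u_I$ and bound $\ah(e_h,v)=F_h(v)-\ah(u_I,v)$. Integrating by parts the exact solution on $\Oh$ and on the gap regions $\DK$, and using $\Delta u=-f$, $u=g^D$ and $\partial_n u=g^N$, the residual splits into four groups. The first is polynomial-approximation terms; the stabilization contributes $A^*(h)h^k|u|_{k+1}$ here. The second is the Taylor remainder $\widetilde g^D-\E(u)=O(\delta^{k+1}\|u\|_{k+1,\infty})$ tested against $h^{-1}\hE(\pv)$, which gives the $h^{-1/2}\delta^{k+1}$ term. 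The third is the Neumann gap terms, namely integrals over $\DK$ and $\etax$ of $(u-\pi)$-type quantities, bounded using $|\DK|\lesssim\delN h$. The fourth is the $f$ term with $\Pi^0_{k-2}$, which is standard.

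Finally, the triangle inequality with $\Norm{u-u_I}$ concludes. The main obstacle is the Neumann stability estimate in Step 1. It requires a careful accounting of the jump terms at off-boundary macro vertices, and of polynomial extensions outside $K$ over the region $\DK$, whose thickness is controlled by the $h^{3/2}$ condition.
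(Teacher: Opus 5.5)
Your overall architecture (coercivity of $\ah$ in $\Norm{\cdot}$, a consistency estimate, Strang lemma) matches the paper's, and the Dirichlet part of Step~1 is essentially fine. The Neumann part of Step~1, however, has a genuine gap.

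After the divergence theorem on $\DK$, the volume remainder is $\int_{\DK}|\nabla\pv|^2+\int_{\DK}\Delta\pv\,\pv$. The first piece is harmless. The second cannot be bounded by any multiple of $|v|^2_{1,\Th}$, however small $\delN$ is. Replacing $v$ by $v+c$ leaves $|v|_{1,\Th}$ and $\Delta\pv$ unchanged, but shifts the term by $c\int_{\DK}\Delta\pv$, which is in general nonzero as soon as $k\geq 2$.

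The same holds for the leftover terms $\int_{\etax}\nabla\pv^-\cdot\taux^-\,\pv^-$ at the macro vertices separating $\GhD$ from $\GhN$, which your sketch does not mention. There the divergence identity is applied only on the Neumann side, so the $\switch$ term is not fully compensated. Hence your claim that all Neumann remainders are $\lesssim\tau|v|^2_{1,\Th}$ is false, and coercivity does not follow from what you wrote.

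You also misplace the origin of the $h^{3/2}$ condition. The jump terms $\{\nabla\pv\cdot\taux\}\jump{\pv}$ cost only $\delN/h$: write $\jump{\pv}=\jump{\pv-v}$ and use polynomial inverse estimates to get $\|\jump{\pv}\|_{0,\etax}\lesssim\delN^{1/2}|v|_{1,K^+\cup K^-}$. This is consistent with your own factor $\delN h^{-3/2}\cdot h^{1/2}$. It is $\int_{\DK}\Delta\pv\,\pv$ that forces $\delN\lesssim h^{3/2}$.

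The missing idea is that these terms must be paid for with the $L^2$ norm of $\pv$, and in $\Norm{\cdot}$ the only source of $L^2$ control is the Dirichlet boundary. The paper proceeds as follows.
\begin{enumerate}
\item Lemma~\ref{lem:boundI} bounds the Neumann remainders by $\tau|\pv|^2_{1,\Th}+\tauN|\pv|_{1,\Th}\|\pv\|_{0,\Oh}$, with $\tauN=\max_{\GhN}\delta/h^{3/2}$.
\item The junction terms are controlled through $h^{-1/2}\|\pv\|_{0,E^-}$, with $E^-\subset\GhD$.
\item The discrete Poincar\'e inequality of Proposition~\ref{prop:poincare} gives $\|\pv\|^2_{0,\Oh}\lesssim|\pv-v|^2_{1,\Th}+|\pv|^2_{1,\Th}+\|\pv\|^2_{0,\GhD}$.
\end{enumerate}
After Young's inequality this produces terms of size $\tauN^2\varepsilon^{-1}|v-\pv|^2_{1,\Th}$ and $\tauN^2\varepsilon^{-1}h^{-1}\|\hE(\pv)\|^2_{0,\GhD}$. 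These must be absorbed by the $\beta$-term and the $\gamma$-term respectively. This is what couples $\beta_0$, $\gamma_0$ and $\tau_0$ in Lemma~\ref{lem:coercivity}, and it is absent from your parameter choice.

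When you prove the $\tauN$-bound, note that elementwise Cauchy--Schwarz gives only $\int_{\DK}\Delta\pv\,\pv\lesssim\delN h^{-2}|\pv|_{1,K}\|\pv\|_{0,K}$. In general one cannot do better on a single element: take $\pv=c+q$ with $\Delta q$ a nonzero constant and $c$ large. So the extra factor $h^{1/2}$ has to come from summing over the boundary layer. For example, use $\|\pv\|_{0,K}\lesssim h^{1/2}\|\pv\|_{0,E}+h|\pv|_{1,K}$, together with a global trace inequality and the Poincar\'e inequality above.

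The same $L^2$ control of $\pv$ is needed in your Step~2 for the gap terms $\int_{\DK}\Delta(u-\pi)\,\pv$.
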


\begin{remark} A condition on the distance from the physical to the computational Neumann boundary  similar to \eqref{condmesh} also appears in   \cite{cheung_optimally_2019}.
\end{remark}

For convex domains in two dimensions it is always possible to construct polygonal domains and relative triangular meshes for which condition \eqref{condmesh} holds. Indeed, if we construct a triangular mesh whose boundary nodes lie on $\Gamma$, we have that $\delta \simeq h^2$, so that, for any given $\tau >0$ there exists $h_0$ such that, provided $h < h_0$, \eqref{condmesh} holds. This is generally not true for non convex domains, for which the requirement that $\Oh \subset \O$ implies that $\delta \simeq h$. Resorting to polygonal meshes, possibly (but not necessarily) obtained by agglomeration from triangular or quadrangular  meshes, allows for elements of diameter $h$ and edge-length $\delta$, with $h$ and $\delta$ satisfying \eqref{condmesh}. Unfortunately, in this case,  in order for the condition on the Neumann boundary, namely $\delta_N \leq \tau h^{3/2}$, to be satisfied as $h$ decreases, the concerned elements will have an increasing number of edges, thus violating one of the assumptions underlying the analysis of basically all the stabilizations available in the literature. In the next section we then present a novel  stabilization strategy that overcomes this limitation and that is particularly well suited for the present framework. For such a stabilization we will be able to prove that $A^*(h) \simeq \log(h)^2$. This will give us the following corollary.

\begin{corollary}
Under the assumptions of Theorem \ref{thm:main}, and assuming that the stabilization for elements adjacent to the Neuman boundary is defined according to \eqref{stabneumann}, if the solution of \eqref{contpb} satisfies $u \in H^{k+1}(\O)$  then we have that
	\[
	\Norm{u - u_h} \lesssim  h^k \log(h)^2 (| u |_{k+1,\O} + \| u  \|_{k+1,\infty,\O}).
	\]
\end{corollary}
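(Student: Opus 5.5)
The corollary follows from Theorem \ref{thm:main} once two ingredients are in place. The first is the bound $A^*(h)\lesssim \log(h)^2$ for the stabilization \eqref{stabneumann}. The second is a check that the geometric term $h^{-1/2}\delta^{k+1}\|u\|_{k+1,\infty,\O}$ is dominated by $h^k$.

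The plan is to apply Theorem \ref{thm:main} directly, with $\beta>\beta_0$, $\gamma>\gamma_0$ and $\tau<\tau_0$ fixed. This gives
\[
\Norm{u-u_h}\lesssim A^*(h)\,h^k|u|_{k+1,\O}+h^{-1/2}\delta^{k+1}\|u\|_{k+1,\infty,\O}.
\]
The global constant $A^*(h)$ is the maximum over the elements of the local stabilization constants. For elements not adjacent to the Neumann boundary we use a standard stabilization, for instance the dofi--dofi one. Assumption \ref{assmp:shapereg} then gives $A^*\lesssim 1$ there, and this should hold even when the micro edges are short (Assumption \ref{ass:microedges}). Elements adjacent to $\GhN$ use \eqref{stabneumann}, and the key step is the bound $s^K(v,v)\lesssim \log(h)^2|v|_{1,K}^2$ on $\ker\PinablaK$, which I take from the analysis of the new stabilization presented in the next section. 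I expect that analysis to follow the classical logarithmic argument. The stabilization only sees macro-edge quantities, namely traces on macro edges or an $H^{1/2}$-type seminorm on $\partial K$. Since the macro edges are bounded in number and have length $\simeq h$, one can pass from $H^{1/2}(\partial K)$ to edge-wise pieces at the cost of a $\log(h/h_{\min})$ factor. With $h_{\min}\gtrsim\delta\gtrsim h^{p}$, each such factor is $\lesssim|\log h|$.

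For the geometric term we use \eqref{condmesh}: $\delta\le\max(\delta_D,\delta_N)\le\tau h$. Hence
\[
h^{-1/2}\delta^{k+1}\le \tau^{k+1}h^{k+1/2}\le h^k\le h^k\log(h)^2
\]
for $h$ small. Here I interpret $\delta^{k+1}$ as $\max_x\delta(x)^{k+1}$; on the Neumann part it is even smaller. Collecting terms gives the claim, with constants depending on $\beta,\gamma,\tau$.

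The main obstacle is the $\log(h)^2$ bound for \eqref{stabneumann}. The number of micro edges of Neumann elements grows as $h\to0$, so the usual inverse and trace estimates on individual edges degenerate. These must be replaced by estimates on macro edges, where the $\log^2$ comes from the polynomial-trace/$H^{1/2}$ decoupling, applied once for each side of the equivalence.
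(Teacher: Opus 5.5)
Your overall structure matches the paper's: insert $A^*(h)\simeq\log(h)^2$ into Theorem~\ref{thm:main}, and absorb the geometric term via $h^{-1/2}\delta^{k+1}\le\tau^{k+1}h^{k+1/2}$. That geometric step is correct.

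The gap is in the stabilization step. You put the logarithm on the wrong side of the equivalence, and you never verify the side that actually matters. Theorem~\ref{thm:main} rests on Lemma~\ref{lem:coercivity}, whose threshold $\beta_0$ is proportional to $1/\alpha_*$. Fixing $\beta>\beta_0$ once and for all, as you do, therefore requires $s^K(v,v)\ge\alpha_*|v|_{1,K}^2$ on $\ker\PinablaK$ with $\alpha_*$ independent of $h$. For \eqref{stabneumann} as written, this is exactly what fails. Combining \eqref{stab2bd}, the stability of the splitting $W=\linears\oplus\bubbles$, the equivalence $|b|_{1/2,e}\simeq h_e^{-1/2}\|b\|_{0,e}$ on bubbles, and Lemma~\ref{lem:breakonehalf}, one gets
\[
\log(\hh/h)^{-2}\,|v|_{1,K}^2\lesssim s^K(v,v)\lesssim |v|_{1,K}^2,\qquad v\in\ker\PinablaK .
\]
So the upper bound you single out as the key step holds with no logarithm at all; your version of it is trivially true. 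The lower bound, by contrast, degenerates like $\log(h)^{-2}$. Nor does the logarithm arise ``once on each side''. Restricting $|\cdot|_{1/2,\partial K}$ to the macro edges costs nothing. The factor $\log(h/\hh)$, squared, is paid only when the global $H^{1/2}(\partial K)$ seminorm is reassembled from macro-edge contributions, via the $H^{1/2-\varepsilon}$ argument with $\varepsilon=1/\log(h/\hh)$.

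The missing step is the rescaling allowed by the remark following the definition of $s^K$. Multiply \eqref{stabneumann} by $\log(h)^2$, or equivalently take $\beta\simeq\log(h)^2$ on the Neumann elements. This restores an $h$-independent $\alpha_*$ and turns the log-free upper bound into $A^*(h)\simeq\log(h)^2$, which is exactly the input the corollary needs. You also need your extra hypothesis $\hh\gtrsim h^{p}$, so that $\log(h/\hh)\simeq|\log h|$, as in the paper's case $\hh\simeq h^{3/2}$. With both in place, your argument coincides with the paper's.

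A smaller point: you claim a dofi--dofi stabilization gives $A^*\lesssim 1$ on the remaining elements ``even when the micro edges are short''. This is not justified, because its upper constant grows with the number of edges. You need to assume, as the paper tacitly does, that only the elements adjacent to $\GhN$ carry an unbounded number of edges.
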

\begin{remark}Theorem \ref{thm:main} requires the  approximate Neumann boundary to be closer to the continuous boundary than it is required for Dirichlet boundary conditions.
	It does not, however, require the normal to the approximate boundary to be an approximation to the normal to the continuous boundary.\end{remark}

\begin{remark}	 An alternative formulation of the Neumann boundary terms is obtained by \replaced[id=mm]{transferring}{ transfering} the integrals over $\G$ to integrals over $\Gh$ by using $\sigma$ to parametrize $\G$ by $\Gh$. 
	This alternative formulation is better suited  for handling Robin boundary conditions.
\end{remark}

\begin{remark} 
	Two other residual free stabilization terms might be added to the formulation, namely
	\[
	\int_{\GN} (\dn{\pu} - g) \dn {\pv} , \qquad \sum_{\x\in \XN} \int_{\etax} \jump{\pu} \jump{\pv }.
	\] 
	However, for the sake for simplicity, we do not consider such terms in our analysis, though it is possible to handle them with the same arguments we will use in the proof of Theorem \ref{thm:main}.
\end{remark}

\newcommand{\hh}{\widehat h}

\subsection{The stabilization term}\label{sec:robust-stabilization} To build a stabilization term robust with respect to the number of edges, we start by recalling \deleted[id=mm]{\cite{beirao_stab}} that, for $v_h \in \ker \PinablaK$, it holds that
\begin{equation}\label{stab2bd}
a(v_h,v_h) \simeq | v_h |^2_{1/2,\partial K}.
\end{equation}
\added[id=mm]{This is a result from \cite{beirao_stab}.}
We first consider a splitting of the $H^{1/2}$ norm into the sum of the contributions of the individual macro edges of $K$. The following lemma holds, \added[id=sb]{where $\hh$ denote the smallest edge length of $K$.}
\begin{lemma}\label{lem:breakonehalf} For all $v_h \in \VEMK$ with $\fint_{\partial K} v_h = 0$ it holds that
	\[  \log (\hh / h)^{-2}| v_h |_{1,\partial K}^2 \lesssim \sum_{E \text{ macro edge of }K} \left(| v_h |^2_{1/2,E} +  | \fint_{E} v_h | ^2\right)  \lesssim | v_h |_{1/2,\partial K}^2
	\]
\end{lemma}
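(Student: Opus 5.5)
The plan is to work directly with the Gagliardo form of the seminorm,
\[
| w |^2_{1/2,D} = \int_D\int_D \frac{|w(x)-w(y)|^2}{|x-y|^2}\,dx\,dy ,
\]
and split $\partial K\times\partial K$ into the blocks $E\times E'$ over pairs of macro edges of $K$. I read the left-hand side of the statement as $\log(\hh/h)^{-2}| v_h |^2_{1/2,\partial K}$; the subscript $1$ looks like a typo. By Assumption \ref{assmp:shapereg} there are only boundedly many blocks. I will also use that each macro edge has length $|E|\simeq h$. I expect this to follow from the star-shapedness with respect to a ball of radius $\simeq h$ together with the bounded number of macro edges. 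If it does not, it must be added as a hypothesis, since the mean terms need it.

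\emph{Upper bound.} The diagonal blocks are trivial, because $| v_h |^2_{1/2,E}$ is just the restriction of the double integral to $E\times E$. For the means I use $\fint_{\partial K} v_h = 0$ and Jensen's inequality:
\[
\Bigl| \fint_E v_h \Bigr|^2 = \Bigl|\fint_E\fint_{\partial K} (v_h(x)-v_h(y))\,dy\,dx\Bigr|^2 \le \frac{1}{|E|\,|\partial K|}\int_E\int_{\partial K} |v_h(x)-v_h(y)|^2\,dy\,dx .
\]
Since $|x-y|\le \mathrm{diam}(K)\simeq h$, this is bounded by $\frac{h^2}{|E|\,|\partial K|}\,| v_h |^2_{1/2,\partial K}\lesssim | v_h |^2_{1/2,\partial K}$. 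Summing over the boundedly many macro edges gives the upper bound.

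\emph{Lower bound.} I bound each off-diagonal block $E\times E'$. On each macro edge write $v_h = w_E + c_E$ with $c_E=\fint_E v_h$. Then
\[
|v_h(x)-v_h(y)|^2\lesssim |w_E(x)|^2+|w_{E'}(y)|^2+|c_E-c_{E'}|^2 .
\]
The constant part contributes at most a bounded multiple of $h^{-2}|E||E'|\,|c_E-c_{E'}|^2\lesssim |c_E|^2+|c_{E'}|^2$, as long as $E$ and $E'$ are either non-adjacent (so $|x-y|\gtrsim h$) or meet at a macro vertex with a nondegenerate angle. For adjacent edges sharing a macro vertex, the fluctuation part gives integrals of the form
\[
\int_E \frac{|w_E(x)|^2}{\mathrm{dist}(x,\text{vertex})}\,dx ,
\]
possibly up to an extra logarithm coming from integrating in $y$. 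These are $H^{1/2}_{00}$-type weighted terms, and they are not controlled by $| w_E |_{1/2,E}$ for general functions.

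This is the main obstacle, and I would handle it with two discrete estimates valid because $v_h|_E$ is piecewise $\Poly{k}$ on edges of length at least $\hh$:
\begin{enumerate}
\item a discrete Sobolev inequality $\| w_E \|^2_{L^\infty(E)}\lesssim \log(h/\hh)\,| w_E |^2_{1/2,E}$, proved by the usual inverse-estimate argument on the smallest edges;
\item the splitting of the weighted integral at distance $\hh$ from the vertex. On the first edge I use the $L^\infty$ bound, which gives a contribution $\lesssim\|w_E\|^2_\infty$. On the remainder I use $\int_{\hh}^{h} dt/t=\log(h/\hh)$ times $\|w_E\|^2_\infty$.
\end{enumerate}
Together these give $\int_E |w_E|^2/\mathrm{dist} \lesssim \log(h/\hh)^2\, | w_E |^2_{1/2,E}$. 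If instead the vertex angle degenerates, I would also route the constant jump $|c_E-c_{E'}|$ through the vertex value, using continuity of $v_h$ there together with the $L^\infty$ bound. This yields $|c_E-c_{E'}|^2\lesssim \log(h/\hh)\,(| v_h |^2_{1/2,E}+| v_h |^2_{1/2,E'})$, which is again absorbed.

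Summing all blocks then gives $| v_h |^2_{1/2,\partial K}\lesssim \log(\hh/h)^2\sum_E\bigl(| v_h |^2_{1/2,E}+|\fint_E v_h|^2\bigr)$, which is the claimed lower bound. I expect the delicate part to be the precise bookkeeping of the logarithm powers in the adjacent-edge blocks, so that they do not exceed two.
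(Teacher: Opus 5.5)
Your upper bound is correct and is essentially the paper's Poincar\'e-type argument; both rely on $|E|\simeq h$. Reading the left-hand side as $|v_h|_{1/2,\bK}$ also agrees with the paper's proof. The lower bound, however, has a genuine gap in the blocks $E\times E'$ where $E,E'$ are adjacent at a macro vertex $\x$. Let $s,t$ be the arclength distances of $x\in E$, $y\in E'$ from $\x$. Then $|x-y|\le s+t$, hence
\[
\int_E\int_{E'}\frac{dx\,dy}{|x-y|^2}\ \ge\ \int_0^{|E|}\int_0^{|E'|}\frac{ds\,dt}{(s+t)^2}=+\infty ,
\]
whatever the angle at $\x$. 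So the constant part contributes $|c_E-c_{E'}|^2\cdot(+\infty)$, not $h^{-2}|E|\,|E'|\,|c_E-c_{E'}|^2$: a piecewise constant with a jump is not in $H^{1/2}$. The fluctuation part diverges for the same reason. In general $w_E(\x)=v_h(\x)-c_E\neq 0$, so $\int_E|w_E|^2\,\mathrm{dist}(\cdot,\x)^{-1}=+\infty$. Your step (2) therefore fails on the first edge, where $\|w_E\|^2_{L^\infty}\int_0^{\hh}ds/s$ is infinite rather than $\lesssim\|w_E\|^2_{L^\infty}$. The split $|v_h(x)-v_h(y)|^2\lesssim|w_E(x)|^2+|w_{E'}(y)|^2+|c_E-c_{E'}|^2$ throws away exactly the cancellation that keeps the block finite, namely the continuity of $v_h$ at $\x$. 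Using the vertex value only for degenerate angles does not repair this.

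The missing idea is to subtract the common vertex value, not the edge means, in adjacent blocks: $v_h(x)-v_h(y)=(v_h(x)-v_h(\x))-(v_h(y)-v_h(\x))$. With your weight estimate $\int_{E'}|x-y|^{-2}dy\lesssim \mathrm{dist}(x,\x)^{-1}$, the block is bounded by $\int_E|v_h-v_h(\x)|^2\,\mathrm{dist}(\cdot,\x)^{-1}$ plus the analogous term on $E'$. Now the integrand vanishes at $\x$. On the edge $e_1\subset E$ containing $\x$ (length $h_{e_1}\ge\hh$), a polynomial inverse estimate gives $|v_h(x)-v_h(\x)|\lesssim (s/h_{e_1})\|v_h-v_h(\x)\|_{L^\infty(e_1)}$. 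That piece is therefore $\lesssim\|v_h-v_h(\x)\|^2_{L^\infty(E)}$, and the rest of $E$ gives $\lesssim\log(h/\hh)\,\|v_h-v_h(\x)\|^2_{L^\infty(E)}$. Since $\|v_h-v_h(\x)\|_{L^\infty(E)}\le 2\|v_h-c_E\|_{L^\infty(E)}$, your discrete Sobolev inequality then gives $\log(h/\hh)^2|v_h|^2_{1/2,E}$, exactly the claimed power.

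With this repair your Gagliardo-block route works and differs genuinely from the paper's, which never meets the vertex issue. The paper uses an inverse inequality to reduce to $\hh^{-\varepsilon}|v_h|_{1/2-\varepsilon,\bK}$. It then splits $v_h=\sum_E\chi_E v_h$ in $H^{1/2-\varepsilon}$, where extension by zero is bounded, so cutting at macro vertices costs nothing. Next it uses $\|v_h\|_{H^{1/2-\varepsilon}_0(E)}\lesssim\varepsilon^{-1}h^{\varepsilon}\bigl(|v_h|_{1/2,E}+|\fint_E v_h|\bigr)$. Finally it takes $\varepsilon=1/\log(h/\hh)$, so that $\varepsilon^{-1}(h/\hh)^{\varepsilon}\simeq\log(h/\hh)$.
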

Observe that, if   \replaced[id=sb]{$\hh\simeq\delta \simeq h^{3/2}$}{$\delta = h^{3/2}$}, then \replaced[id=sb]{$\log(\hh/h) \simeq \log(h^{1/2}) \simeq \log(h)$}{$\log(\delta/h) = \log(h^{1/2}) \simeq \log(h)$}.

\newcommand{\LE}{L^E}
\begin{proof} 
Using an inverse inequality and the equivalence of the norms of 
 $H^{1/2-\varepsilon}(E)$ and $H^{1/2-\varepsilon}(E)$ (see \cite{Bsub3field}) we can then write
	\begin{multline*}
		| v_h |_{1/2,\partial K} \lesssim \hh^{-\varepsilon}	| v_h |_{1/2-\varepsilon,\partial K} \lesssim  \hh^{-\varepsilon} | \sum_E \chi_{E} v_h |_{1/2-\varepsilon,\partial K}  \leq \sum_E \hh^{-\varepsilon} | v_h |_{H^{1/2-\varepsilon}_0(E)} \\
		\lesssim 
		\sum_E \hh^{-\varepsilon}\left(
		\frac {h^{\varepsilon}} \varepsilon \left(
		h^{-1/2} \| v_h - \fint_{E} v_h \|_{0,E} +  | v_h - \fint_E v_h |_{1/2,E}
		\right) + \frac {h^{\varepsilon}} {\sqrt{\varepsilon}} | \fint_{E} v_h |
		\right) \\
		\lesssim  
		\sum_E	 \frac {h^\varepsilon} {\hh^\varepsilon} \frac 1 \varepsilon
		\left(  | v_h  |_{1/2,E} +    | \fint_{E} v_h | \right).
	\end{multline*}
	Choosing $\varepsilon = 1/(\log(h/\hh))$ and exploiting the shape regularity of $K$ to pass to the squares, we get the first inequality, where the implicit constant in the inequality depends on the number of macro edges of $K$. 
  To prove the second inequality we observe that we have 
  \begin{multline*}
  	\sum_E |	\fint_E v |  \leq h^{-1} \sum_E \int_E | v | \lesssim h^{-1/2} \sum_E
  	\| v \|_{0,E} 
  	\lesssim h^{-1/2} ( h^{-1/2} \sum_E \| v \|_{0,E} + h^{1/2} \sum_E | v |_{1,E})\\
  	\lesssim h^{-1/2} (
  	h^{-1/2} \| v \|_{0,\bK} + h^{1/2} | v |_{1,\bK}) \lesssim | v |_{1,\bK}
  \end{multline*}
where we used Assumption \ref{assmp:shapereg} and a Poincar\'e inequality.  In view of \eqref{stab2bd}, this gives us the desired bound.
\end{proof}

\NOTE{
\begin{multline*}
\sum_E |	\fint_E v |  \leq h^{-1} \sum_E \int_E | v | \lesssim h^{-1/2} \sum_E
\| v \|_{0,E} 
\lesssim h^{-1/2} ( h^{-1/2} \sum_E \| v \|_{0,E} + h^{1/2} \sum_E | v |_{1,E})\\
 \lesssim h^{-1/2} (
h^{-1/2} \| v \|_{0,\bK} + h^{1/2} | v |_{1,\bK}) \lesssim | v |_{1,\bK}
\end{multline*}
}
Let us now, as a first step,  consider the case $k = 1$. We introduce the space
\[
\LE =V^K_1|_{E} = \left\{ v \in C^0(E): \ v|_e \in \Poly 1(e)  \ \text{ for all edge $e$ of $K$ with $e\subset E$}\right\}.
\]
Several options are available for defining a computable bilinear form $s^E_L: \LE \times \LE \to \mathbb{R}$ satisfying
\[
s_L^E(w,w) \simeq | w |_{1/2,E}^2 \quad \text{ for all }w\in \LE.
\]
We can, for instance, resort to a wavelet decomposition, or define the bilinear form at the algebraic level, as a matrix acting directly on the vector of nodal values of the $\LE$ elements, the matrix being defined as the square root of the 1-D stiffness matrix corresponding to the $H^1(E)$ semi scalar product.

\

Let  then \deleted[id=mm]{next} focus on the case $k \geq 2$.  We  let
\begin{gather}
\bubbles = \left\{v \in \VEMK|_{\bK} : v(\node) = 0\ \text{ for all node $\node$ of $K$} \right\},
\end{gather}
so that we can write 
\begin{equation}\label{splitting}
W = \VEMK|_{\bK} = \linears \oplus \bubbles.
\end{equation}
We have the following lemma.
\begin{lemma}
	For all $b \in \bubbles$ we have that
	\begin{equation}\label{normbubbles}
	| b |_{1/2,E}^2 \simeq \sum_{e\subset E} | b|_{1/2,e}^2,\end{equation}
		where the sum is taken over all edges $e$ of the polygon $K$ contained in the macro edge $E$. 
\end{lemma}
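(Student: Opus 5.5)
Since $b\in\bubbles$ vanishes at every node of $K$, its restriction to each edge $e\subset E$ vanishes at both endpoints of $e$. Hence $b|_E=\sum_{e\subset E} b_e$, where $b_e$ is the extension by zero of $b|_e$ and each $b_e$ lies in $H^{1/2}_{00}(e)$. The plan is to work with the Lions--Magenes characterization of the $H^{1/2}$ seminorm on an interval,
\[
| w |^2_{1/2,E} = \int_E\int_E \frac{|w(s)-w(t)|^2}{|s-t|^2}\,ds\,dt,
\]
and to compare both sides of \eqref{normbubbles} with the quantity
\[
\sum_{e\subset E}\Bigl(| b|^2_{1/2,e} + \int_e \frac{|b(s)|^2}{\dist(s,\partial e)}\,ds\Bigr)=\sum_{e\subset E}\| b_e\|^2_{H^{1/2}_{00}(e)}.
\]
Here $\dist(s,\partial e)$ is the distance from $s$ to the endpoints of $e$.

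For the lower bound $\sum_e | b|^2_{1/2,e}\lesssim | b|^2_{1/2,E}$ nothing specific is needed. The double integral over $E\times E$ contains the disjoint diagonal blocks $e\times e$, and the integrand is nonnegative.

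For the upper bound I split $E\times E$ into the diagonal blocks $e\times e$ and the off-diagonal blocks $e\times e'$ with $e\neq e'$. The diagonal blocks give exactly $\sum_e | b|^2_{1/2,e}$. On an off-diagonal block I use $|b(s)-b(t)|^2\le 2|b(s)|^2+2|b(t)|^2$. For fixed $s\in e$, integrating $|s-t|^{-2}$ over $t\in E\setminus e$ gives at most $2/\dist(s,\partial e)$. The off-diagonal contribution is therefore bounded by $4\sum_e\int_e |b(s)|^2/\dist(s,\partial e)\,ds$. It remains to show
\[
\int_e \frac{|b(s)|^2}{\dist(s,\partial e)}\,ds \lesssim | b|^2_{1/2,e}
\]
for each edge, with a constant independent of $|e|$ and of the number of edges in $E$.

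This weighted Hardy-type estimate is the main step. Both sides are invariant under affine rescaling of $e$ to the reference interval $(0,1)$. The function $b|_e$ is then a polynomial of degree at most $k$ vanishing at $0$ and $1$, that is, an element of the finite-dimensional space $\Poly{k}\cap H^1_0(0,1)$. On this space the left side is finite because $b(s)=O(s)$ near $s=0$, and similarly near $1$. The right side vanishes only for constants, and the only constant in the space is $0$. Both sides are therefore squared seminorms with trivial kernel on a finite-dimensional space, hence equivalent norms, with a constant depending only on $k$. Alternatively, one can invoke the general fact that for $w\in H^{1/2}_{00}$ the weighted $L^2$ term is controlled by the $H^{1/2}$ seminorm, but the finite-dimensional argument keeps the dependence explicit.

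The key point to check is that all constants are uniform. They must not depend on the number of edges in $E$, which is unbounded since $\hh\simeq\delta$, nor on the ratio $|e|/h$. Scale invariance of both sides and the exact per-point bound $\int_{E\setminus e}|s-t|^{-2}\,dt\le 2/\dist(s,\partial e)$ ensure this, so \eqref{normbubbles} holds with constants depending only on $k$.
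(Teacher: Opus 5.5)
Your proposal is correct and follows the same route as the paper: the lower bound comes from keeping only the diagonal blocks $e\times e$ of the double integral, and the upper bound comes from $| b |^2_{1/2,E}\lesssim\sum_{e\subset E}\| b\|^2_{H^{1/2}_{00}(e)}$ combined with scaling and norm equivalence on $\Poly{k}(0,1)\cap H^1_0(0,1)$. The only difference is that the paper cites Lemma 3.1 of \cite{bertoluzza2023localization} for this localization estimate, whereas you prove it directly via the pointwise bound $\int_{E\setminus e}|s-t|^{-2}\,dt\le 2/\dist(s,\partial e)$, which makes the independence from the number of edges in $E$ transparent (your off-diagonal constant should be $8$ rather than $4$, which is immaterial).
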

\begin{proof}
	We recall (see  \cite[Lemma 3.1]{bertoluzza2023localization}) that we have that 
	\[
	| b |^2_{1/2,E} \lesssim \sum_{e \subset E} \| b \|^2_{H^{1/2}_{00}(e)}.
	\]
	Moreover, as $| \cdot |_{1/2,e}$ is a norm on $\bubbles|_e$, by a scaling argument and by the equivalence of all norms on the  finite dimensional spaces $\Poly{k}(0,1)\cap H^1_0(0,1)$, we can see that
	\[
	\sum_{e \subset E} \| b  \|^2_{H^{1/2}_{00}(e)} \lesssim  \sum_{e \subset E} | b |^2_{1/2,e} \lesssim | b |^2_{1/2,E} 
	\]
	finally yielding \eqref{normbubbles}.
\end{proof}

We now let $\Linear: W \to \linears$ and $\Bubble: W \to \bubbles$ denote the linear operators mapping functions of $W$ into the two unique components of the splitting \eqref{splitting}:
\[
\text{for all }w\in W \qquad  w = \Linear(w) + \Bubble(w) \text { with } \Linear(w) \in \linears, \ \Bubble(w) \in \bubbles.
\] 
The splitting \eqref{splitting} is $H^{1/2}(E)/\mathbb{R}$ stable, as stated by the following lemma.
\begin{lemma}
	For all $w \in W$ with $\fint_E w = 0$ it holds that 
	\[
	| w |^2_{1/2,E} \simeq | \Linear(w) |^2_{1/2,E} + | \Bubble(w) |^2_{1/2,E}.
	\]
\end{lemma}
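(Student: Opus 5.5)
The plan is to prove only the nontrivial direction. The bound $| w |^2_{1/2,E} \lesssim | \Linear(w) |^2_{1/2,E} + | \Bubble(w) |^2_{1/2,E}$ is immediate from $w = \Linear(w) + \Bubble(w)$ and the triangle inequality. For the converse it suffices to show
\[
| \Bubble(w) |_{1/2,E} \lesssim | w |_{1/2,E},
\]
since then $| \Linear(w) |_{1/2,E} \leq | w |_{1/2,E} + | \Bubble(w) |_{1/2,E} \lesssim | w |_{1/2,E}$. This route avoids any direct analysis of the stability of piecewise linear interpolation in $H^{1/2}(E)$ on a possibly very non-uniform partition of $E$ into edges, whose lengths may range from $\hh$ to $h$.

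To bound the bubble component, I would first invoke \eqref{normbubbles}, which gives $| \Bubble(w) |^2_{1/2,E} \lesssim \sum_{e\subset E} | \Bubble(w) |^2_{1/2,e}$. On each edge $e$, $\Linear(w)|_e$ is the linear interpolant $I_e w$ of $w|_e$ at the two endpoints of $e$, so $\Bubble(w)|_e = w|_e - I_e w$. The key local estimate is
\[
| w - I_e w |_{1/2,e} \lesssim | w |_{1/2,e} \qquad \text{for all } w|_e \in \Poly{k}(e),
\]
with a constant independent of $|e|$. I would prove it on the reference interval $(0,1)$ and transport it by affine scaling. The one-dimensional Gagliardo seminorm $| \cdot |_{1/2}$ is invariant under dilation, since the double integral $\int\int |v(x)-v(y)|^2/|x-y|^2$ is scale-free in 1D, so no power of $|e|$ appears. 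On $(0,1)$, the map $p \mapsto p - Ip$ is linear on the finite-dimensional space $\Poly{k}(0,1)$. It annihilates constants, and $| \cdot |_{1/2}$ vanishes exactly on constants. It therefore induces a linear map on $\Poly{k}(0,1)/\mathbb{R}$, where $| \cdot |_{1/2}$ is a norm, and the map is bounded by equivalence of norms in finite dimension. The constant depends only on $k$.

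Finally, I would sum over edges and use superadditivity of the Gagliardo seminorm with respect to disjoint subsets. Since the sets $e\times e$ are disjoint subsets of $E\times E$,
\[
\sum_{e\subset E} | w |^2_{1/2,e} \leq | w |^2_{1/2,E}.
\]
Chaining these gives $| \Bubble(w) |^2_{1/2,E} \lesssim \sum_e | w |^2_{1/2,e} \leq | w |^2_{1/2,E}$, which concludes the argument. The zero-mean assumption $\fint_E w = 0$ is not actually needed for the seminorm statement; it only makes it consistent with the $H^{1/2}(E)/\mathbb{R}$ framing. The main point to check carefully is that all constants are independent of the number and of the lengths of the edges in $E$. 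Here this reduces to the scale invariance of the 1D $H^{1/2}$ seminorm and to the uniformity of the constant in \eqref{normbubbles}, which was established in the previous lemma.
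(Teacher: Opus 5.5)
Your proposal is correct and follows essentially the paper's route. You bound $|\Bubble(w)|_{1/2,E}$ via \eqref{normbubbles}, a scale-invariant edgewise estimate $|w - I_e w|_{1/2,e} \lesssim |w|_{1/2,e}$, and $\sum_{e\subset E}|w|^2_{1/2,e} \le |w|^2_{1/2,E}$; you then recover $|\Linear(w)|_{1/2,E}$ and the reverse bound by the triangle inequality, exactly as the paper does. The only difference is cosmetic: the paper proves the edgewise estimate by controlling the linear part through its endpoint values and a sup-norm bound on polynomials, whereas you invoke norm equivalence on $\Poly{k}(0,1)/\mathbb{R}$ directly.
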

\begin{proof} We start by proving that for $w$ linear on $e$ and $b \in \Poly{k}(e)\cap H^1_0(e)$ we have that
	\[
	| w |_{1/2,e} \lesssim | w + b |_{1/2,e}.
	\]
	As the $H^{1/2}$ seminorm is scale invariant, it is sufficient to prove the bound on  the reference interval $I = (0,1)$. By direct computation, for $v \in \Poly 1 (I)$ we have, with $\tilde v = v - \fint v$
	\begin{multline*}
		| v  |_{1/2,I}  =| v(1) - v(0)|  = | [v+b](1) - [v+b](0) | = 
		| [v+b - \fint_I(v+b)](1) - [v+b - \fint_I(v+ b)](0) | 
		\\
		\leq | [v+b  - \fint_I(v+b)](1) | + | [v+b-\fint_I (v+b)](0) | \lesssim \| v+ b - \fint_I(v+b)\|_{\infty,I} \\
		\lesssim \| v+ b - \fint_I(v+b)\|_{1/2,I} \lesssim  | v+ b |_{1/2,I}.
	\end{multline*}
	By triangle inequality, this implies that for $b \in \Poly k(e)\cap H^1_0(e)$  and $w$ linear we have that
	\[
	| b |_{1/2,e} \leq | b + w |_{1/2,e} + | w |_{1/2,e} \lesssim | b + w |_{1/2,e}.
	\]
	Then, using \eqref{normbubbles}, we can write	\begin{equation}\label{stab1}
		| \Bubble(w) |^2_{1/2,E} \lesssim \sum_{e \subset E} | \Bubble(w) |^2_{1/2,e} \lesssim  \sum_{e \subset E} | \Bubble(w) + \Linear(w) |^2_{1/2,e}  =  \sum_{e \subset E} | w |^2_{1/2,e} \lesssim | w |_{1/2,E}^2,
	\end{equation}
	and, 
	by triangle inequality
	\begin{equation}\label{stab2}
		| \Linear(w) |^2_{1/2,E} \lesssim | \Linear (w) + \Bubble(w) |_{1/2,E}^2 + | \Bubble(w) |_{1/2,E} \lesssim | w |_{1/2,E}^2.
	\end{equation}
	The bounds \eqref{stab1} and \eqref{stab2} give us the upper bound. The lower bound is obtained by triangle inequality.
\end{proof}

To obtain the final form of the new \deleted[id=dp]{quasi robust} stabilization term, we finally observe that for all $b \in \bubbles$ it holds that
\[
| b |_{1/2,e} \simeq h_e^{-1/2} \| b \|_{0,e}.
\]
Indeed, the upper bound is an inverse inequality, while the lower bound is a Poincar\'e inequality. Then we have the final form of our new stabilization term, robust with respect to the number of edges:
\begin{equation}\label{stabneumann}
s^K (v,w) = \sum_E \left(s^E_1(\Linear(v),\Linear(w)) + \sum_{e\subset E} h^{-1}_e \int_e \Bubble(w)\Bubble(w) + \fint_E u \fint_E w \right).
\end{equation}
We point out that thanks to the splitting of the  stabilization term as the sum of independent  contributions from the different macro edges, the corresponding matrix will have a block diagonal structure, which will play a crucial role in the implementation, as we will see in the next section.

%
%
%
%
%

\subsection{Elimination of the ``lazy'' degrees of freedom} 
Constructing the domain $\Oh$ in such a way that the condition $\delta_N \leq \tau_0 h^{3/2}$ is satisfied might require to increase the number of boundary nodes per element as $h$ increases. This is the case, for instance, if the domain $\O$ is not convex, or if the domain $\Oh$ is to be built by agglomeration of elements of a structured triangular or quadrilateral grid. To avoid the resulting presence of larger and larger dense blocks in the stiffness matrix, we need to resort to the static condensation technique proposed in \cite{bertoluzza2024virtual}, which allows to eliminate all but few of the degrees of freedom ``living'' on the boundary \replaced[id=mm]{macro edges}{ macroedges}. Let us review the main ideas underlying such a procedure. For $E$ macro edge of the tessellation, we let 
\[
V_E = \left\{v \in V_h : v = 0\ \text{ on } \cup_K \partial K \setminus E, \ \int_K vp = 0, \forall K \in \Th,\ p \in \Poly{k-2}  \right\}.
\]
A necessary and sufficient condition for $\PinablaK v = 0$, $v$ being an element of $V_E$, is
\[
\int_E v p\cdot\nK = 0, \quad \forall p \in \Poly{k-1}^2. 
\]
We then split $V_E$ as
\[
V_E = \widehat V_E \oplus \widecheck V_E, \qquad \text{ with } \qquad \widecheck V_E =\left\{
v \in V_E\ :\  \int_E v p\cdot\nK = 0, \quad \forall p \in \Poly{k-1}^2
\right\},
\]
where $\widehat V_E$ is any subspace of $\VE$ such that the splitting holds. 
It is not difficult to ascertain that, for $v \in V_h$ arbitrary, it holds that
\[
	\ah (\cu,v) =	  \beta
	s_h(\cu,v-\pv).
\]
Then we can easily retrieve $\cu$ as a function of $\hu$ by setting $v = \cv \in \cVE$ as test function in Problem \ref{pb:discrete1}, resulting in the following local equation
	\[
\ah (\hu,\cv) + \beta s_h(\cu,\cv) = 0.
\]
We point out that $\cv$ does not directly depend on the data of the problem. This suggests  to define the space $\widehat V_E$ as
\[
\widehat V_E = \left\{v \in \VE : \ah(v,\cv) = 0\ \text{ for all } \cv \in \cVE\right\}.
\]
With such a definition, we can compute the solution to Problem \ref{pb:discrete1}, by directly solving the equation
\[
\ah (u,v) = \int_{\Oh} f v + \int_{\GhD} \widetilde g^D (\nabla \pv\cdot \nh + \gamma h^{-1} \hE(\pv) ) + \int_{\GN} g^N  \pv, \qquad \text{ for all } v \in \widehat V_E.
\]
We observe that, while the number of degrees of freedom for $V_E$, with $E$ Neumann macro edge, increases as $h$ goes to $0$, the number of degrees of freedom for $\widehat V_E$ stays uniformly bounded, and it is asymptotically comparable to the number of degrees of freedom that one would have if the macro edge $E$, which is the union of a number of small straight edges increasing as $1/\sqrt{h}$, was replaced with a single straight edge.

%
%
%

	\section{Proof of Theorem \ref{thm:main}}\label{sec:proof}
	
	We devote this section to proving Theorem \ref{thm:main}. To this aim we will establish continuity and coercivity of the bilinear form $\ah$, and we will provide an estimate of the consistency error. The error bound will follow by the second Strang Lemma. We let
		\[
		  \tauD = \max_{x\in \Gh} \frac{\delta(x)} h, \qquad \tauN = \max_{x \in \GhN} \frac{\delta(x)}{h^{3/2}}.
\]
Remark that the maximum in the definition of $\tauD$ is taken over the whole boundary $\Gh$, while for the definition of $\tauN$ the maximum is taken only over the Neumann portion of the boundary.

		\subsection{Preliminary bounds}
We start by recalling some known bounds and by proving some preliminary estimates that will be needed later on.  The following inverse inequalities for polynomials hold under our shape regularity assumptions.
	\begin{proposition}
		For all $p \in \Poly{k}(K)$ and for all 
		$m,j$ with  $0 \leq m \leq j$ it holds that
		\begin{gather}\label{inversebase}
			\| p \|_{j,K} \lesssim h^{m-j} \| p \|_{m,K},\\
			\label{tracePoly}
			\| p \|_{0,\partial K} \lesssim \,  h^{-1/2} \| p \|_{0,K}.
		\end{gather}
	\end{proposition}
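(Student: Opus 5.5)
The plan is to derive both bounds from the shape regularity in Assumption~\ref{assmp:shapereg}: $K$ is star shaped with respect to every point of a ball $B_K$ of radius $\rho_K \simeq h$, and $\mathrm{diam}(K)\lesssim h$ by quasi uniformity. The key observation is that both estimates involve only polynomials on $K$. Hence the number of (macro) edges, and in particular the possibly very short edges of length $\simeq\delta$ near the boundary, plays no role. Only the chunkiness $\mathrm{diam}(K)/\rho_K$ enters the constants, and it is uniformly bounded.

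For \eqref{inversebase}, I would first compare norms on $K$ with norms on the inscribed ball $B_K$. Since $B_K\subset K\subset B(x_K, C h)$, with $x_K$ the center of $B_K$, and $C$ independent of $h$, we have
\[
\| p\|_{0,B_K}\leq \|p\|_{0,K}\leq \|p\|_{0,B(x_K,Ch)}\lesssim \|p\|_{0,B_K}.
\]
The last inequality holds for every $p\in\Poly{k}$ because of the equivalence of norms on the finite dimensional space $\Poly{k}$ on the reference configuration (unit ball inside the ball of radius $C$), combined with the affine scaling $x\mapsto x_K+\rho_K \hat x$. The same argument applies to derivatives, which are again polynomials of degree at most $k$. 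The inverse inequality $|p|_{j,B_K}\lesssim \rho_K^{m-j}|p|_{m,B_K}$ is the standard one on a ball, obtained by scaling from the reference ball. Chaining these comparisons gives $|p|_{j,K}\lesssim h^{m-j}\|p\|_{m,K}$, and summing over the seminorms up to order $j$ gives \eqref{inversebase}.

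For \eqref{tracePoly}, the potential obstacle is that $\partial K$ may consist of many small edges, so that I cannot simply sum edgewise trace inequalities over a triangulation of $K$. I would avoid any subtriangulation and instead use the star-shapedness directly, through a divergence theorem argument. Let $x_0=x_K$ and $\mathbf m(x)=x-x_0$. Star-shapedness with respect to the whole ball $B_K$ gives $\mathbf m\cdot\nK\geq \rho_K\simeq h$ almost everywhere on $\partial K$. Then
\[
h\|p\|^2_{0,\partial K}\lesssim\int_{\partial K}p^2\,\mathbf m\cdot \nK=\int_K \mathrm{div}(p^2\mathbf m)=\int_K\left(2p^2+2p\nabla p\cdot \mathbf m\right)\lesssim \|p\|_{0,K}^2+h\|p\|_{0,K}|p|_{1,K}.
\]
Applying \eqref{inversebase} with $j=1$, $m=0$ to bound $|p|_{1,K}\lesssim h^{-1}\|p\|_{0,K}$ yields $\|p\|^2_{0,\partial K}\lesssim h^{-1}\|p\|^2_{0,K}$, which is \eqref{tracePoly}. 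In this argument the constant depends only on $k$ and the chunkiness parameter, and not on the number or length of the edges of $K$.

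I expect the only delicate step to be exactly this independence from the edge count. The multiplier argument above handles it, and it is also the reason I would avoid any edge-by-edge proof.
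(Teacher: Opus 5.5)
Your proof is correct. The paper gives no proof of this proposition at all; it only asserts that these polynomial inverse and trace inequalities hold under the shape regularity of Assumption~\ref{assmp:shapereg}. So your argument supplies what the paper treats as standard, and there is no paper proof to contrast it with in detail.

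\emph{Inverse inequality.} Sandwiching $K$ between the inscribed ball $B_K$ and a concentric ball of radius $Ch$ is the same device the paper itself uses in the proof of Proposition~\ref{prop:2.2}. There it invokes $\|p\|_{0,\infty,B_K}\lesssim h^{-1}\|p\|_{0,K}$ on a ball of diameter $Ch$. Passing from the seminorm bound to the full norm $\|p\|_{j,K}$ tacitly uses $h\lesssim 1$ to absorb the lower-order terms. That is harmless, but it is worth stating.

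\emph{Trace inequality.} Your multiplier argument is a good choice in this setting. Assumption~\ref{assmp:shapereg} bounds only the number of \emph{macro} edges. Elements along $\GhN$ carry a number of small edges growing like $h^{-1/2}$ when $\delta_N\le\tau h^{3/2}$. So an argument whose constants depend on the edge count, or on the shape regularity of sub-triangles built on those tiny edges, would not be uniform in $h$. Yours depends only on $\mathrm{diam}(K)/\rho_K$ and $k$.

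The one step you assert without justification is $\mathbf m\cdot\nu_K\ge\rho_K$ a.e.\ on $\partial K$, and it is easy to supply. At a relative interior point $x$ of an edge, $K$ is locally a half-plane. Star-shapedness with respect to $y=x_0+t\nu_K$, $t<\rho_K$, means the segment $[y,x]$ lies in $K$. This forces $(x-y)\cdot\nu_K\ge 0$, that is, $(x-x_0)\cdot\nu_K\ge t$. Letting $t\to\rho_K$ gives the claim.
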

	
	We next bound,  in term of their $L^2(K)$  norm,  the $L^2(\DK)$ and $L^2(\etax)$ norm of polynomials, $E\subset \partial K$ and $\x$  being a boundary macro edge and a boundary macro vertex of $K\in \Th$, respectively.
	\begin{proposition}\label{prop:2.2}
		For all $p$ in $\Poly{k}$ the following bounds hold, the  implicit constant in the inequalities only depending on the polynomial degree $k$ and on the shape regularity of the element $K$. For $E\subset \partial\Oh$ boundary macro edge and $\x \in \partial\Oh$ boundary macro vertex of $K \in \Th$ it holds that
		\begin{gather}\label{inverseDK}
			\| p \|^2_{0,\DK}  \lesssim \frac \delta h \| p \|^2_{0,K},
			\qquad 
			\| p \|^2_{0,\etax}
			\lesssim \frac \delta {h^2}\| p \|^2_{0,K}. 
	\end{gather}
	
\end{proposition}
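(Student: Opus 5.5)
We will prove both bounds by reducing the problem to a one-dimensional sup-norm estimate along the extrapolation direction $\sigma$, and then applying standard inverse inequalities for polynomials on $K$. The key geometric observation is that $\DK$ is a thin region. It is swept by the segments $\eta_x = (x,\widetilde x)$, $x\in E$, each of length $\delta(x)\le\delta$. Moreover, since $\delta \lesssim h$ (we are in the regime $\delta\le\tau h$), $\DK$ is contained in a ball $B$ of radius $\simeq h$ centered in $K$. The polynomial $p$ is defined on all of $\mathbb{R}^2$; in $\DK$ it coincides with the extension $\pv^{\KE}$.

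\textbf{First bound.} We estimate $\|p\|^2_{0,\DK}\le |\DK|\,\|p\|^2_{\infty,\DK}$. For the area, parametrizing $\DK$ by $(x,t)\mapsto x+t\sigma(x)$ with $x\in E$ and $t\in(0,\delta(x))$, we get $|\DK|\lesssim \delta\,|E|\lesssim \delta h$. This uses the continuity and edgewise smoothness of $\sigma$ (Assumption \ref{ass:smoothness}) to bound the Jacobian, together with $|E|\lesssim h$. The main point is to show $\|p\|_{\infty,B}\lesssim \|p\|_{\infty,K}$ for a ball $B$ of radius $Ch$ containing $K\cup\DK$. This follows from the equivalence of norms on $\Poly{k}$ after scaling: by Assumption \ref{assmp:shapereg}, $K$ contains a ball $B_0$ of radius $\rho h$, and $\|p\|_{\infty,B}\lesssim\|p\|_{\infty,B_0}$ by a scaling argument on the reference configuration of two concentric balls with fixed ratio of radii. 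We then conclude with $\|p\|^2_{\infty,B_0}\lesssim h^{-2}\|p\|^2_{0,B_0}\le h^{-2}\|p\|^2_{0,K}$, which again holds by scaling. Combining, $\|p\|^2_{0,\DK}\lesssim \delta h\cdot h^{-2}\|p\|^2_{0,K}=\frac{\delta}{h}\|p\|^2_{0,K}$.

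\textbf{Second bound.} The segment $\etax$ has length $\delta(\x)\le\delta$ and lies in the same ball $B$. Hence $\|p\|^2_{0,\etax}\le \delta\,\|p\|^2_{\infty,B}\lesssim \delta h^{-2}\|p\|^2_{0,K}$, using the same chain of inequalities as above.

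The only delicate step is justifying that $\DK$ (and hence $\etax$) lies within distance $O(h)$ of the inscribed ball, with constants uniform in $h$. This requires $\delta\lesssim h$, which follows from \eqref{condmesh}, or simply from $\tau$ being bounded. It also requires a uniformly bounded Jacobian of the $\sigma$-parametrization, which gives $|\DK|\lesssim\delta|E|$. If that regularity is unavailable, one can instead bound $|\DK|$ directly: $\DK$ is contained in the $\delta$-neighbourhood of $E$, whose area is $\lesssim \delta(|E|+\delta)\lesssim\delta h$, and this estimate suffices.
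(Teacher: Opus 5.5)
Your proof is correct and follows essentially the same route as the paper. Both bound $\|p\|^2_{0,\DK}\le|\DK|\,\|p\|^2_{0,\infty,\DK}$ and $\|p\|^2_{0,\etax}\lesssim\delta\,\|p\|^2_{0,\infty,\etax}$, and then use the inverse estimate $\|p\|_{0,\infty,B_K}\lesssim h^{-1}\|p\|_{0,K}$ on a ball $B_K$ of radius $\simeq h$ containing $\DK$. You also make explicit two points the paper leaves implicit: the area bound $|\DK|\lesssim\delta h$, and the requirement $\delta\lesssim h$ needed for $\DK\subseteq B_K$ with a uniform constant.
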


\begin{proof} We exploit the inverse inequality of the form
\[	\| p \|_{0,\infty,B_K} \lesssim h^{-1} \| p \|_{0,K},\] where $B_K$ is the ball with center coinciding with the baricenter of $K$ and  diameter $C h$, with $C$ sufficiently big so $\Delta_E \subseteq B_K$. We can write
\begin{equation*}
	\| p \|^2_{0,\DK} \leq | \DK | \| p \|^2_{0,\infty,\DK} \lesssim \delta h \| p \|^2_{0,\infty,K} \lesssim \frac \delta h \| p \|_{0,K}^2.
\end{equation*}
To prove the second bound we write
\[
\| p \|^2_{0,\etax} \lesssim \delta \| p \|^2_{0,\infty,\etax} \lesssim 
\delta \|p \|^2_{0,\infty,B_K} 
\lesssim \frac \delta {h^2}\| p \|^2_{0,K}.
\]
\end{proof}
Applying Proposition \ref{prop:2.2} to the gradient of $p$, which is also a polynomial we get that $| p |_{1,\DK} \lesssim (\delta/h) | p |_{1,K}$, and then  space interpolation yields the following corollary.
\begin{corollary}Under the assumptions of Proposition \ref{prop:2.2} for all $0 \leq s \leq 1$ it holds that
	\[
	\| p \|_{s,\DK} \lesssim \frac \delta h \| p \|_{s,K}.
	\]
\end{corollary}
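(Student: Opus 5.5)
The approach is to prove the bound at the two endpoints $s=0$ and $s=1$ and then interpolate. The interpolation is done on a fixed finite-dimensional space of polynomials, so no extension operator on the thin region $\DK$ is needed.

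\emph{Endpoint $s=0$.} This is the first bound in \eqref{inverseDK} of Proposition \ref{prop:2.2}.

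\emph{Endpoint $s=1$.} Apply Proposition \ref{prop:2.2} componentwise to $\nabla p \in \Poly{k-1}^2$. This bounds $|p|_{1,\DK}$ by $|p|_{1,K}$ with the same $\delta/h$ factor. Adding the $L^2$ part from the case $s=0$ gives the bound for the full norm $\|p\|_{1,\DK}$ in terms of $\|p\|_{1,K}$. Both endpoint constants depend only on $k$ and on the shape regularity, exactly as in the proof of Proposition \ref{prop:2.2}. That proof only uses $\DK\subseteq B_K$ and $|\DK|\lesssim \delta h$, both of which hold because of the shape regularity assumption and $\delta\le \tau h$.

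\emph{Interpolation.} Consider the restriction operator $R: p\mapsto p|_{\DK}$ acting on $X=\Poly{k}$. Equip $X$ with the norms $\|\cdot\|_{0,K}$ and $\|\cdot\|_{1,K}$ on the domain side, and with $\|\cdot\|_{0,\DK}$ and $\|\cdot\|_{1,\DK}$ on the target side. By the endpoint bounds, $R$ is bounded at $s=0$ and $s=1$ with the same constant $C\,\delta/h$. The exact interpolation theorem (K-method) then bounds $R$ between the interpolated spaces $[X,X]_s$ with the same constant.

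The step I expect to be the main obstacle is identifying these interpolated norms with the Sobolev norms $\|\cdot\|_{s,K}$ and $\|\cdot\|_{s,\DK}$, uniformly in $h$ and in the geometry of $\DK$.
\begin{itemize}
\item On the $K$ side: first show, by Pythagoras in the $L^2$ and $H^1$ projections, that $\Poly{k}(K)$ is a uniformly complemented subspace of $H^1(K)$. By reiteration the K-norm on $\Poly{k}$ is then equivalent to the restriction of the $H^s(K)$ norm. The constants are uniform by scaling to a reference element, using the shape regularity of $K$.
\item On the $\DK$ side: we only need $\|p\|_{s,\DK}\lesssim \|p\|_{[X_0,X_1]_s}$. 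This direction follows from the general inequality $\|\cdot\|_{s,\DK}\lesssim\|\cdot\|_{0,\DK}^{1-s}\|\cdot\|_{1,\DK}^{s}$ for the intrinsic Sobolev–Slobodeckij norm. That inequality can be checked directly on the Gagliardo double integral by splitting the domain of integration according to whether $|x-y|$ is smaller or larger than a threshold $t$, and then optimizing over $t$. It needs no extension operator, so it is insensitive to the thinness of $\DK$.
\end{itemize}
Combining the two sides gives the stated bound with constant proportional to $\delta/h$.

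A caveat on the form of the bound. Proposition \ref{prop:2.2} is stated for squared norms: $\|p\|^2_{0,\DK}\lesssim (\delta/h)\|p\|^2_{0,K}$. For unsquared norms it therefore gives the factor $(\delta/h)^{1/2}$, and $p\equiv 1$ shows that this factor cannot be improved at $s=0$. So the statement, with factor $\delta/h$ on unsquared norms, cannot follow from Proposition \ref{prop:2.2} and is false as written at $s=0$ for constant $p$. The argument above proves it only with the factor $(\delta/h)^{1/2}$ on unsquared norms. This is the same convention as Proposition \ref{prop:2.2}, and it is presumably the form the authors intend for the later use in the consistency analysis.
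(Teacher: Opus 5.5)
Your correction of the statement is right. Proposition~\ref{prop:2.2} bounds squared norms, and $p\equiv 1$ shows that $(\delta/h)^{1/2}$ is the sharp factor for unsquared norms. This is also the form in which the estimate is used later, e.g.\ the factor $\sqrt{\delta/h}$ in the consistency bound. Your route (Proposition~\ref{prop:2.2} at $s=0$, Proposition~\ref{prop:2.2} applied to $\nabla p$ at $s=1$, then interpolation) is the paper's one-line argument.

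The step that fails as you justify it is on the $\DK$ side. The bound $\|v\|_{s,D}\lesssim\|v\|_{0,D}^{1-s}\|v\|_{1,D}^{s}$ with a geometry-independent constant is not a general fact. The near-diagonal part of your splitting uses $v(y)-v(x)=\int_0^1\nabla v(x+\theta(y-x))\cdot(y-x)\,d\theta$, which needs the segment $[x,y]$ to lie in $D$. But $\DK$ is not convex: one side is the polygonal chain $E$, possibly a staircase of many tiny edges, and $\DK$ pinches wherever a vertex of $E$ lies on $\Gamma$. For thin non-convex sets the bound genuinely fails. Take two parallel strips of width $w$ at distance at most $w$, joined at one end. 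Let $v$ vanish on one strip and equal $1$ on half of the other, with a transition of unit length. Then $\|v\|_{0,D}^{2-2s}\|v\|_{1,D}^{2s}\simeq w$, while the pairs across the gap alone give $|v|_{s,D}^2\gtrsim w^{1-2s}$ for every $s\in(0,1)$. So thinness is exactly what makes the non-convexity harmful, contrary to your remark.

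There is a second, smaller issue. A multiplicative inequality only yields an embedding of the $(s,1)$ interpolation space, not of the $(s,2)$ space you interpolate in. This is harmless here only because $\dim\Poly{k}$ is fixed, and you should say so.

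The way out is to use that $p$ is a polynomial and compare directly with norms on $K$, which removes interpolation altogether. Since $\DK\subseteq B_K$ and $B_K$ is convex, for $x,y\in\DK$ you have $|p(x)-p(y)|\le |x-y|\,\|\nabla p\|_{0,\infty,B_K}\lesssim |x-y|\,h^{-1}|p|_{1,K}$. This is the inverse inequality from the proof of Proposition~\ref{prop:2.2}, applied to $\nabla p$. Hence, for $0<s<1$,
\[
|p|^2_{s,\DK}\lesssim h^{-2}|p|^2_{1,K}\int_{\DK}\int_{\DK}|x-y|^{-2s}\,dx\,dy\lesssim h^{-2}|p|^2_{1,K}\,|\DK|\,h^{2-2s}\lesssim \frac\delta h\, h^{2-2s}|p|^2_{1,K}\lesssim \frac \delta h\,|p|^2_{s,K}.
\]
The last step is the polynomial inverse inequality $h^{1-s}|p|_{1,K}\lesssim |p|_{s,K}$, which follows from scaling and finite dimensionality since both seminorms vanish exactly on constants. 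Adding the $s=0$ bound gives $\|p\|^2_{s,\DK}\lesssim (\delta/h)\|p\|^2_{s,K}$.

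This also makes your $K$-side identification unnecessary. If you keep it, the right tool is not reiteration but a single projector onto $\Poly{k}$ bounded in both $L^2(K)$ and $H^1(K)$ (e.g.\ $\Pi^0_k$), together with the retraction theorem. Naive scaling there also needs care, because the full unscaled norms are not scale invariant.
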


The last bound that we will need is a bound of the $H^s_0(\DK)$ norm, $s<1/2$,  of a polynomial, in terms of its $L^2(K)$ norm, where, once again, $E$ is a macro edge of $K\in \Th$. Such a bound is a corollary of the following Lemma, the proof of which is the object of Appendix \ref{sec:embedding}.

\begin{lemma}\label{embedding}
Let $\domain\subseteq \mathbb{R}^2$ be a bounded domain with  piecewise smooth boundary $\partial\domain$, and let $\rho_\domain$ denote the radius of the largest inscribed ball. Let $f \in  H^s(\domain) \cap L^\infty(\domain)$. Then for $s \in ]0,1/2[$  it holds that 
\[
\| f \|_{H^s_0(\domain)} \lesssim  \| f \|_{H^{s}(\domain)}  + \rho_\domain^{1/2-s}
\sqrt{| \partial\domain |}\,   \frac 1 {\sqrt{(1-2s)}}  \| f \|_{L^\infty(\domain)}.  \]
\end{lemma}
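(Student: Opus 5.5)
The plan is to compare the $H^s_0(\domain)$ norm with the $H^s(\domain)$ norm through the Hardy-type weighted term that distinguishes them for $s<1/2$. Here $H^s_0(\domain)$ is understood as the space of functions whose extension by zero $\tilde f$ lies in $H^s(\mathbb{R}^2)$, normed by $\|\tilde f\|_{H^s(\mathbb{R}^2)}$; for $s<1/2$ this coincides with $H^s(\domain)$, but the equivalence constant blows up as $s\to 1/2$. Writing the Gagliardo seminorm of $\tilde f$ over $\mathbb{R}^2\times\mathbb{R}^2$ and splitting the domain of integration, we get
\[
|\tilde f|^2_{s,\mathbb{R}^2} = |f|^2_{s,\domain} + 2\int_\domain |f(x)|^2 \int_{\mathbb{R}^2\setminus \domain} \frac{dy}{|x-y|^{2+2s}}\, dx .
\]
The inner integral is bounded by the integral over the exterior of the ball $B(x,d(x))$, with $d(x)=\mathrm{dist}(x,\partial\domain)$, which gives $\lesssim s^{-1} d(x)^{-2s}$. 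Thus the whole task reduces to bounding the weighted term $\int_\domain |f|^2 d(x)^{-2s}\,dx$.

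For this weighted term I would use only the $L^\infty$ bound on $f$:
\[
\int_\domain |f|^2 d^{-2s} \le \|f\|^2_{L^\infty(\domain)} \int_\domain d(x)^{-2s}\,dx .
\]
The geometric integral is then estimated by the coarea formula applied to the distance function, since $|\nabla d|=1$ a.e.:
\[
\int_\domain d^{-2s} = \int_0^{\rho_\domain} t^{-2s}\, \mathcal{H}^1(\{d=t\})\,dt .
\]
Note that $d\le\rho_\domain$ on $\domain$, because a point at distance $t$ from the boundary is the centre of an inscribed ball of radius $t$. The key geometric input is the bound $\mathcal{H}^1(\{d=t\})\lesssim|\partial\domain|$ for $t\le\rho_\domain$. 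Granting it, the integral is
\[
\lesssim |\partial\domain| \,\frac{\rho_\domain^{1-2s}}{1-2s}.
\]
Taking square roots produces exactly the term $\rho_\domain^{1/2-s}\sqrt{|\partial\domain|}\,(1-2s)^{-1/2}\|f\|_{L^\infty}$. The $L^2$ part of the norm is unchanged by extension by zero, and the factor $s^{-1/2}$ stays harmless as long as $s$ is bounded away from $0$. In the application $s$ is close to $1/2$; otherwise one can treat small $s$ directly, since $\|f\|_{H^s_0}\lesssim\|f\|_{L^2}+\dots$.

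The main obstacle is the level-set perimeter bound $\mathcal{H}^1(\{d=t\})\lesssim|\partial\domain|$, uniform in $t\in(0,\rho_\domain)$ and with a constant independent of the shape. This is where the piecewise smoothness is needed. I would try first a comparison argument: the level set $\{d=t\}$ is contained in the image of $\partial\domain$ under the inward parallel map, and the inner parallel sets $\{d>t\}$ are increasingly "rounded". For a planar domain one can use a Steiner/Hadwiger-type inequality, namely that the perimeter of the inner parallel set is at most the original perimeter, at least for simply connected or piecewise smooth domains. This follows by comparing each arc of the level set with the boundary portion it projects onto via the nearest-point map, which is $1$-Lipschitz from the level set onto $\partial\domain$ in the reverse direction, up to the curvature correction. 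If this comparison fails for general shapes, the fallback is to assume it for the domains $\DK$ relevant to the paper. These are thin curvilinear quadrilaterals, for which $\{d=t\}$ can be computed explicitly and has length $\lesssim|\partial\DK|$.
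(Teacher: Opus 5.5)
Your reduction matches the paper's. You split the Gagliardo seminorm of the zero extension, bound the exterior integral by $\pi s^{-1}\mathrm{d}(x)^{-2s}$, pull out $\|f\|_{L^\infty(G)}$, and reduce everything to $\int_G \mathrm{d}(x)^{-2s}\,dx$. The paper also ends with a factor $1/(s(1-2s))$ and drops $s^{-1}$.

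The gap is in the one step that carries the geometry: the uniform bound $\mathcal{H}^1(\{\mathrm{d}=t\})\lesssim|\partial G|$ for $0<t<\rho_G$. You leave it unproven, and the comparison you sketch cannot prove it. The nearest-point map collapses the whole arc of radius $t$ around a reentrant corner onto the corner. Along concave arcs the inner parallel curve has length element $(1+t|\kappa|)\,ds$. So the level set is locally \emph{longer} than the boundary it projects onto; at the reentrant corner of an L-shaped domain it gains $\pi t/2$. No local argument ``up to curvature correction'' can give the bound.

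For simply connected $G$ the bound does hold, but for a global reason: the excess at concave parts is paid for by the losses at convex parts, because the total turning is $2\pi$. This is Sz.-Nagy's inequality $\mathcal{H}^1(\{\mathrm{d}=t\})\le|\partial G|-2\pi t$. Piecewise smoothness alone does not suffice. A disk with many tiny holes violates your bound, and also the paper's strip estimate, so some topological restriction of this kind is implicit in the lemma anyway. Your fallback of computing the level sets of $\Delta_E$ explicitly is harder than it sounds, since $E$ is a polyline made of many short edges, possibly with reentrant vertices. What actually rescues it is that $\Delta_E$ is simply connected.

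The paper never needs lengths of level sets. It covers $G$ by Besicovitch families of balls $B(\sigma,\mathrm{d}(\sigma)/2)$ and groups them by dyadic radius. It then counts the disjoint balls of radius $\sim 2^{-j}$ inside the strip $\{\mathrm{d}\lesssim 2^{-j}\}$, whose \emph{area} is $\lesssim|\partial G|\,2^{-j}$.

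That strip bound, $|\{x\in G:\mathrm{d}(x)<t\}|\lesssim t\,|\partial G|$ for $t\le\rho_G$, is weaker than yours, since it follows from yours by coarea. It is also elementary: cover $\partial G$ by $O(|\partial G|/t)$ balls of radius $t$ centred on it, then double their radii. This works for connected $\partial G$, since $|\partial G|\ge 2\pi\rho_G$.

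You can keep your continuous formalism and close the gap by replacing the coarea formula with the layer-cake identity
\[
\int_G\mathrm{d}(x)^{-2s}\,dx=\rho_G^{-2s}|G|+2s\int_0^{\rho_G}t^{-2s-1}\,\bigl|\{x\in G:\ \mathrm{d}(x)<t\}\bigr|\,dt\lesssim\frac{|\partial G|\,\rho_G^{1-2s}}{1-2s}.
\]
Here $|G|\lesssim\rho_G|\partial G|$ is the strip bound at $t=\rho_G$. This is exactly the continuous counterpart of the paper's dyadic sum.
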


\begin{corollary}\label{cor:embedding}
For all polynomial $p \in \Poly{k}$ and all $s > \frac 1 4$, it holds that
\[
\| p \|_{H^s_0(\DK)} \lesssim \left(
\frac \delta h 
\right)^{1/2-s}
\frac {h^{-s} }{\sqrt{(1-2s)}} \| p \|_{0,K}.
\]
\end{corollary}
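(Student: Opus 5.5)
We apply Lemma \ref{embedding} with $\domain = \DK$ and $f = p$, and then estimate the two resulting terms using the earlier polynomial bounds. First we record the geometry of $\DK$. Its boundary consists of $E$, $\tE$ and the two segments $\eta_{\x}$, $\eta_{\x'}$. Since $E$ has length $\lesssim h$, $\tE$ is a smooth curve close to $E$, and the $\eta$ segments have length $\le \delta$, we get $|\partial\DK| \lesssim h$. The region $\DK$ has width at most $\delta$ in the direction $\sigma$, so its largest inscribed ball satisfies $\rho_{\DK} \lesssim \delta$. We only need these upper bounds, because both quantities enter Lemma \ref{embedding} with positive exponents ($1/2 - s > 0$ for $s<1/2$).

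The first term is $\| p \|_{H^s(\DK)}$. By the Corollary following Proposition \ref{prop:2.2}, $\| p \|_{s,\DK} \lesssim (\delta/h)\| p \|_{s,K}$. The inverse inequality \eqref{inversebase}, extended to fractional $s$ by interpolation, gives $\| p \|_{s,K} \lesssim h^{-s}\| p \|_{0,K}$. Since $\delta \lesssim h$ and $1/2 - s < 1$, we have $\delta/h \le (\delta/h)^{1/2-s}$. Also $1 \le 1/\sqrt{1-2s}$. Hence the first term is bounded by the right-hand side of the claim.

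The second term is $\rho^{1/2-s}\sqrt{|\partial\DK|}\,(1-2s)^{-1/2}\| p \|_{L^\infty(\DK)}$. As in the proof of Proposition \ref{prop:2.2}, we use $\| p \|_{0,\infty,\DK} \le \| p \|_{0,\infty,B_K} \lesssim h^{-1}\| p \|_{0,K}$. Combining this with the geometric bounds gives
\[
\delta^{1/2-s} h^{1/2} h^{-1} \frac{1}{\sqrt{1-2s}} \| p \|_{0,K} = \left(\frac\delta h\right)^{1/2-s} \frac{h^{-s}}{\sqrt{1-2s}} \| p \|_{0,K},
\]
which is exactly the stated form.

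The main obstacle is checking that Lemma \ref{embedding} applies to the thin region $\DK$ with a constant independent of its aspect ratio. $\DK$ is a curvilinear quadrilateral of width about $\delta$ and length about $h$, with piecewise smooth boundary. I expect the hidden constant in Lemma \ref{embedding} to be uniform over such regions. That should be true if the appendix proof only uses a decomposition into a boundary layer of width about $\rho$, combined with a Hardy-type estimate. The restriction $s > 1/4$ in the statement is presumably used there, or to make the fractional inverse and interpolation constants uniform in $s$. If uniformity fails, the fallback is to map $\DK$ onto a rectangle $E \times (0,\delta)$ via the parametrization $(x,t) \mapsto x + t\sigma(x)$, which is bi-Lipschitz uniformly thanks to Assumption \ref{ass:smoothness}, and to apply the lemma on the rectangle.
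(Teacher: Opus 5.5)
Your proposal is correct and is essentially the paper's own proof. You apply Lemma \ref{embedding} on $\DK$ with $\rho_{\DK}\lesssim\delta$ and $|\partial\DK|\lesssim h$, control the $H^s$ term by the bounds of Proposition \ref{prop:2.2} combined with interpolation and an inverse inequality, and control the $L^\infty$ term by $h^{-1}\|p\|_{0,K}$; the restriction $s>1/4$ is used only to absorb the factor $1/\sqrt{s}$ that the Hardy-type term in the appendix actually produces. One small fix: the corollary you quote after Proposition \ref{prop:2.2} should carry the factor $(\delta/h)^{1/2}$ rather than $\delta/h$ (test with $p=1$), and that is what the paper's proof uses; your conclusion is unaffected, since $(\delta/h)^{1/2}\lesssim(\delta/h)^{1/2-s}$ as well.
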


\NOTE{
\begin{proof}[Proof of Corollary]
	\begin{multline}
		\| p \|_{H^s_0(\DK)} \lesssim | p |_{H^{s}(\DK)}  + h^{1/2}
		\delta^{1/2-s} \frac 1 {\sqrt{s(1-2s)}}  \| p \|_{L^\infty(\DK)} \\
		\lesssim \| p \|_{0,\DK}^{1-s} | p |_{1,\DK}^{s} + h^{1/2}
		\delta^{1/2-s} \frac 1 {\sqrt{s(1-2s)}} \| p \|_{L^\infty(\DK)} \\
		\lesssim \left(
		\frac \delta h 
		\right)^{1/2}  \| p \|_{0,K}^{1-s} | p |_{1,K}^{s} +  h^{1/2}
		\delta^{1/2-s} \frac 1 {\sqrt{s(1-2s)}} \| p \|_{L^\infty(K)}
		\\
		\lesssim 
		\left(
		\frac \delta h 
		\right)^{1/2}   h^{-s} | p |_{0,K} +  h^{1/2}
		\delta^{1/2-s} \frac 1 {\sqrt{s(1-2s)}} h^{-1} \| p \|_{0,K}\\
		=
		\left(
		\frac \delta h 
		\right)^{1/2}   h^{-s} | p |_{0,K} +  
		\delta^{1/2-s}  h^{s -1/2}    h^{-s}   \frac 1 {\sqrt{s(1-2s)}}  \| p \|_{0,K} \\
		\lesssim \left(
		\frac \delta h 
		\right)^{1/2-s}
		\frac {h^{-s} }{\sqrt{s(1-2s)}} \| p \|_{0,K}.
	\end{multline}
\end{proof}
}

	\subsection{Continuity} We start the analysis of the method by proving a continuity bound for the bilinear form $\ah $ with respect to the norm $\Norm\cdot$ (which is defined in \eqref{defnorm}). We claim that, letting
	\[
	\Normup v^2 = | \pv |_{1,\Th}^2 + A^*(h) | u - \pv |_{1,\Th}^2 + h^{-1} \| v \|_{0,\GD}^2	\]
	 for all $u, v \in H^1(\Th)$ it holds that
	\begin{equation}\label{continuity}
		\ah(u,v) \lesssim \Normup u \Normup v
	\end{equation}

	To prove \eqref{continuity}, we begin by observing that the following Poincar\'e type inequality holds for the (discontinuous) polynomial component $\pu = \Pinabla u$  of functions $u \in H^1(\Oh)$.
	\begin{proposition}[Discrete Poincar\'e inequality]\label{prop:poincare}
		For all $u \in H^1(\Oh)$	it holds
		\begin{equation}\label{polypoincare}
		\| \pu \|^2_{0,\Oh} \lesssim 	 | \pu - u |_{1,\Th}^2 + | \pu |_{1,\Th}^2 
		+ \sum_{e \subset \GD} \| \pu \|_{0,e}^2.
		\end{equation}
	\end{proposition}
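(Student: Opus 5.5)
We prove \eqref{polypoincare} by applying a broken Poincar\'e--Friedrichs inequality to the discontinuous piecewise polynomial $\pu$. Its interelement jumps are then controlled through the continuity of $u$.

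\textbf{Step 1: broken Poincar\'e inequality.} We use the standard bound for piecewise $H^1$ functions on a shape regular tessellation (Brenner's inequality),
\[
\| w \|^2_{0,\Oh} \lesssim | w |^2_{1,\Th} + \sum_{e\ \text{interior}} h_e^{-1}\| \Pi^0_0 \jump{w} \|^2_{0,e} + \Big(\int_{\GhD} w\Big)^2 ,
\]
applied with $w = \pu$. Only the averages of the jumps over edges enter, or equivalently averages over macro edges, after regrouping. It is therefore convenient to write the jump term per macro edge $E$ shared by $K$ and $K'$ as $|E|^{-1}\big|\int_E(\pu^K-\pu^{K'})\big|^2$. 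By Assumption \ref{assmp:shapereg}, a bounded number of macro edges of length comparable to $h$ suffices for this version of the inequality. This avoids any dependence on the possibly very many tiny edges, which is exactly the issue in the Neumann setting.

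\textbf{Step 2: control of the jumps.} Since $u\in H^1(\Oh)$ is continuous across $E$, we have $\int_E (\pu^K-\pu^{K'}) = \int_E (\pu^K-u) - \int_E(\pu^{K'}-u)$. On each element we use $\int_K (u-\PinablaK u)=0$ together with a scaled trace inequality and a Poincar\'e inequality on the star-shaped element $K$:
\[
|E|^{-1}\Big|\int_E (u-\pu^K)\Big|^2 \lesssim h^{-1}\| u-\pu^K\|_{0,\partial K}^2 \lesssim h^{-2}\|u-\pu^K\|^2_{0,K} + |u-\pu^K|^2_{1,K} \lesssim |u-\pu^K|^2_{1,K}.
\]
Summing over the macro edges gives a bound on the jump contribution by $|u-\pu|^2_{1,\Th}$.

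\textbf{Step 3: boundary term.} The boundary term is bounded by Cauchy--Schwarz, since $(\int_{\GD}\pu)^2 \lesssim \sum_{e\subset\GD}\|\pu\|^2_{0,e}$ with a constant depending only on $|\Oh|$. Combining Steps 1--3 gives \eqref{polypoincare}.

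The main obstacle is Step 1: the broken Poincar\'e inequality has to be stated with jump terms only on macro edges, so that it holds uniformly despite edges of length $\hh\ll h$. My first attempt is to prove it by duality. We take $\phi$ solving $-\Delta\phi=\pu$ with $\phi=0$ on the Dirichlet part and natural conditions elsewhere, integrate by parts elementwise, and bound the jump terms via $\int_E \jump{\pu}\,\nabla\phi\cdot n = \int_E \jump{\pu}\,\Pi^0_0(\nabla\phi\cdot n) + \int_E\jump{\pu}(\nabla\phi\cdot n-\Pi^0_0\nabla\phi\cdot n)$. The second term is handled by the local $H^1$ approximation of the gradient on $K$ and uses $|\pu-u|_{1,K}$ again. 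If elliptic regularity is lacking, an averaging (Oswald-type) interpolant of $\pu$ built on the macro-vertex structure is the fallback.
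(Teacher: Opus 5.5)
The gap is Step 1, on which the whole argument rests. You need a broken Poincar\'e--Friedrichs inequality for $\pu$ with jump terms only on macro edges. It must hold uniformly even though Neumann elements may have a number of edges growing as $h\to 0$ and edge lengths $\widehat h\ll h$. That is not Brenner's inequality, which is established for shape regular simplicial-type meshes whose faces are comparable in size to the neighbouring elements. Subtriangulating your polygons produces degenerate triangles, so it does not apply directly.

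You leave this step unproven. The duality sketch would need $H^2$ regularity of the dual mixed problem on $\Oh$, with constants uniform in $h$. That fails at the many re-entrant corners of agglomerated approximate domains and at Dirichlet--Neumann junctions. The averaging interpolant is not specified.

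The supporting details are also off. Assumption \ref{assmp:shapereg} bounds the number of macro edges but does not make each of them of length comparable to $h$. The weight you write, $|E|^{-1}|\int_E \jump{w}|^2=\|\Pi^0_0\jump{w}\|^2_{0,E}$, has lost the factor $h^{-1}$ of your first display. The unweighted version is false: for $w$ elementwise equal to the values of a fixed smooth function compactly supported in $\Omega$, the right-hand side is $O(h)$ while $\|w\|_{0,\Oh}$ does not tend to zero. Your Step 2 bound is consistent with the weighted form, so this is likely a slip.

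What you are missing is that no broken inequality is needed at all. Since $u\in H^1(\Oh)$, it is itself a conforming function that is elementwise close to $\pu$, which is precisely the object your Oswald-type fallback would have to build. This is the paper's route. Apply the standard Poincar\'e inequality to $u$,
\[
\|u\|^2_{0,\Oh}\lesssim |u|^2_{1,\Oh}+\sum_{e\subset\GhD}\|u\|^2_{0,e}.
\]
Split $|u|_{1,\Oh}\le|\pu|_{1,\Th}+|u-\pu|_{1,\Th}$ and $\|u\|_{0,e}\le\|\pu\|_{0,e}+\|u-\pu\|_{0,e}$, and bound
\[
\sum_{e\subset\GhD}\|u-\pu\|^2_{0,e}\lesssim\sum_{K}\bigl(h^{-1}\|u-\pu\|^2_{0,K}+h\,|u-\pu|^2_{1,K}\bigr)\lesssim h\,|u-\pu|^2_{1,\Th}.
\]
This uses exactly the two tools of your Step 2: the trace inequality on an element star-shaped with respect to a ball of radius $\simeq h$, which holds independently of the number of edges, and $\int_K(u-\PinablaK u)=0$. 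Conclude with $\|\pu\|_{0,\Oh}\le\|u\|_{0,\Oh}+\|u-\pu\|_{0,\Oh}$ and $\|u-\pu\|_{0,K}\lesssim h\,|u-\pu|_{1,K}$. None of these steps depends on the number or length of the edges, so Step 1 and the jump bookkeeping can simply be dropped.
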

	
	\begin{proof} We start by observing that a Poincar\'e bound holds, of the form
		\begin{multline}\label{poinc1}
		\| u \|^2_{0,\Oh}  \lesssim | u |^2_{1,\Oh} + |  \fint_{\GDh} 	u |^2 \leq | u |^2_{1,\Oh} + \fint_{\GDh} |
		u |^2 \lesssim | u |^2_{1,\Oh} + \sum_{e\in \DEdges}  \| u \|_{0,e}^2\\
		\lesssim 
 | \pu |^2_{1,\Oh} + | u - \pu |^2_{1,\Oh}  	+ \sum_{e \subset \GD}  \| \pu \|_{0,e}^2 + 
 \sum_{e \subset \GD}  \| \pu - u  \|_{0,e}^2\\
\lesssim   
 | \pu - u |_{1,\Th}^2 + | \pu |_{1,\Th}^2 
 + \sum_{e \subset \GD} \| \pu \|_{0,e}^2 
		\end{multline}
	where the last bound is obtained by combining a trace inequality with an Aubin-Nitsche argument allowing to bound the $L^2(K)$ norm of $u - \pu$ with $h$ times its $H^1(K)$ semi norm.
		Now conclude by observing 
		\begin{gather*}
			\| \pu \|^2_{0,\Th} \lesssim \| \pu - u \|^2_{0,\Th} + \| u \|^2_{0,\Oh}   \lesssim h | \pu - u |_{1,\Th}^2 +\| u \|^2_{0,\Oh},
		\end{gather*}
	which, combined with \eqref{poinc1}, allows to obtain \eqref{polypoincare}.
	\end{proof}

	Using the same arguments as \cite[Lemma 2.2]{VEM_curvo}, we can write, for all $u, v \in H^1(\Oh)$,
	\begin{multline*}
		\ah (u,v) \lesssim \Normup{u} \Normup{v}	 	- \int_{\Gh} \dnh{\pu}\,(v - \pv ) \\-\underbrace{ \int_{\Gh} \dnh{\pu}\, \pv	}_{\star}+ \int_{\GN} \dn{\pu} \, \pv + \switch \sum_{\x  \in \XN} \int_{\etax}  [\nabla \pu\cdot \taux ]  \{ \pv \},
	\end{multline*}
where we added and subtracted $\star$.	To bound the three last terms on the right hand side we resort to the following lemma, which deals with the case $\switch = 1$.
	
	\begin{lemma}\label{lem:boundI} For $u,v \in V_h$ it holds that
		\begin{multline}\label{contpezzoNeumann}
			- \int_{\GhN} \dnh{\pu} 
			\, \pv	+ \int_{\GN} \dn{\pu}\,  \pv + \sum_{\x  \in \XN} \int_{\etax}  [\nabla \pu\cdot \taux ]  \{ \pv \}
			\\ \lesssim \tau  | \pu |_{1,\Th} | \pv |_{1,\Th} +  \tauN | \pu |_{1,\Th} \| \pv \|_{0,\Oh}
			.
		\end{multline}
	\end{lemma}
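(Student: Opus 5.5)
The plan is to rewrite the three Neumann terms, one macro edge at a time, using the divergence theorem on the thin regions $\DK$. Because $\pu$ and $\pv$ are extended to $\DK$ as the single polynomials $\pu^{\KE}$ and $\pv^{\KE}$, the identity holds exactly. The outward normal to $\DK$ is $-\nh$ on $E$, $\n$ on $\tE$ and $\taux^\pm$ on $\etax$, so for every Neumann macro edge $E$
\[
\int_{\tE} \dn{\pu}\,\pv - \int_E \dnh{\pu}\,\pv + \sum_{\x \in \partial E}\int_{\etax} \nabla\pu^{E}\cdot\taux^{\pm}\,\pv^{E}
= \int_{\DK} \left(\Delta\pu\,\pv + \nabla\pu\cdot\nabla\pv\right).
\]
Summing over the Neumann macro edges, the first two terms reproduce $-\int_{\GhN}\dnh{\pu}\,\pv+\int_{\GN}\dn{\pu}\,\pv$. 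At an interior Neumann macro vertex the two one-sided $\etax$ contributions combine, through the elementary identity $a^+b^+ + a^-b^- = [a]\{b\} + \{a\}(b^+-b^-)$, with $a^\pm=\nabla\pu^\pm\cdot\taux^\pm$ and $b^\pm=\pv^\pm$, into $[\nabla\pu\cdot\taux]\{\pv\}$ plus a remainder.

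The $[\nabla\pu\cdot\taux]\{\pv\}$ part cancels exactly against the correction term (this is the reason for the choice $\switch=1$). What is left of the left-hand side of \eqref{contpezzoNeumann} is therefore:
\begin{itemize}
\item the volume terms over the $\DK$;
\item the jump terms $-\int_{\etax}\{\nabla\pu\cdot\taux\}\,(\pv^+-\pv^-)$;
\item at the Dirichlet/Neumann separating macro vertexes, where only $E^+$ is Neumann, a one-sided term $\int_{\etax}\nabla\pu^+\cdot\taux\,\pv^+$. This is handled like the jump terms, or cancels against the correction term if $\XN$ is meant to include these vertexes.
\end{itemize}

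The volume terms are routine. By Proposition \ref{prop:2.2}, applied to $\pv$ and to the polynomials $\nabla\pu$, $\nabla\pv$, $\Delta\pu$, and by \eqref{inversebase}:
\begin{itemize}
\item $\int_{\DK}\nabla\pu\cdot\nabla\pv \lesssim (\delta/h)\,|\pu|_{1,K}|\pv|_{1,K}\le \tau\,|\pu|_{1,K}|\pv|_{1,K}$;
\item $\int_{\DK}\Delta\pu\,\pv \lesssim (\delta/h)\,h^{-1}|\pu|_{1,K}\|\pv\|_{0,K}\lesssim \tauN h^{1/2}|\pu|_{1,K}\|\pv\|_{0,K}$.
\end{itemize}
Summing over $K$ with Cauchy--Schwarz then yields the first term and part of the second term on the right-hand side of \eqref{contpezzoNeumann}.

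The main obstacle is the jump term $\int_{\etax}\{\nabla\pu\cdot\taux\}(\pv^+-\pv^-)$. The crude bound through the second estimate in \eqref{inverseDK} gives $(\delta/h^2)\,|\pu|_{1,K}\,\|\pv\|_{0,K\cup K'}$. Since $\delta/h^2\le \tauN h^{-1/2}$, this is off by a factor $h^{-1/2}$. To recover that factor I will exploit that $v$ is continuous at $\x$, which gives $(\pv^+-\pv^-)(\x) = (\pv^+-v)(\x)-(\pv^--v)(\x)$. Along $\etax$ one then has $|\pv^+-\pv^-|\le |(\pv^+-\pv^-)(\x)| + \delta\,\|\nabla\pv^\pm\|_{\infty}$:
\begin{itemize}
\item The $\delta\|\nabla\pv^\pm\|_\infty$ contribution gives a factor $\delta^{3/2}/h^{2}\lesssim \tauN^{3/2}h^{1/4}$, which is harmless.
\item The vertex value $(\pv-v)(\x)$ is genuinely nonlocal. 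I would first try to control it by $\|\pv\|_{0}$ together with a factor $\delta^{1/2}$ gained from integrating only over the short segment $\etax$, so that $\|\nabla\pu\|_{0,1,\etax}\lesssim \delta h^{-1}|\pu|_{1,K}$. The condition $\delta_N\le\tau h^{3/2}$ is designed to make this product, combined with an $L^\infty$ inverse estimate $\|\pv\|_{\infty}\lesssim h^{-1}\|\pv\|_{0,K}$, end up of order $\tauN\,|\pu|_{1,K}\|\pv\|_{0,K}$ up to powers of $h$.
\end{itemize}
If that accounting falls short, the fallback is to avoid the pointwise splitting altogether. Instead I would apply the divergence theorem to the whole union $\DK\cup\Delta_{E'}$ of two adjacent gap regions, extending $\pv$ by zero and invoking Corollary \ref{cor:embedding} with $s$ close to $1/2$ to absorb the discontinuity of the extension across $\etax$ into $H^s_0$ norms, at the price of a logarithmic factor.
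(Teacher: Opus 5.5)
Your skeleton is exactly the paper's: the divergence theorem on each $\DK$, then regrouping the one-sided $\etax$ integrals so that $[\nabla\pu\cdot\taux]\{\pv\}$ cancels the correction term. However, neither of the two estimates that carry the proof is in place.

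\textbf{The volume term.} The term you call routine is not. Your arithmetic is off: $(\delta/h)\,h^{-1}=\delta/h^{2}\le\tauN h^{-1/2}$, not $\tauN h^{1/2}$. So $\int_{\DK}\Delta\pu\,\pv$ is short by the same factor $h^{-1/2}$ as the jump term. It is also precisely the term on which the paper spends its effort: it interpolates $\scrG(w)=\int_{\DK}\Delta\pu\,w$ between $L^2(\DK)$ and $H^1_0(\DK)$, then applies Corollary \ref{cor:embedding} with $s=1/2-1/|\log(\delta/h)|$.

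Be aware that no elementwise estimate can deliver $\tauN$ here. Take $E\subset\{x_2=0\}$ of length $h$, $\DK=E\times(0,\delta)$, $\pu=x_2^2$, $\pv=1$. Then $\int_{\DK}\Delta\pu\,\pv=2\delta h\simeq(\delta/h^2)\,|\pu|_{1,K}\|\pv\|_{0,K}$. Tracking the exponents in the interpolation chain gives $(\delta/h)^{1/2+\varepsilon}h^{-1}$, which is no better than Cauchy--Schwarz, so that step cannot simply be imported.

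The missing $h^{1/2}$ has to come from the fact that the Neumann elements form a layer of width $O(h)$:
\begin{enumerate}
\item sum the crude local bounds into $(\delta_N/h^2)|\pu|_{1,\ThN}\|\pv\|_{0,\ThN}$;
\item write $\pv=v+(\pv-v)$, with $\|\pv-v\|_{0,K}\lesssim h|v-\pv|_{1,K}$;
\item use a layer estimate $\|v\|_{0,\cup_{K\in\ThN}K}\lesssim h^{1/2}\|v\|_{1,\Oh}$.
\end{enumerate}
This yields $(\tau+\tauN)|\pu|_{1,\Th}(|v|_{1,\Oh}+|v-\pv|_{1,\Th})+\tauN|\pu|_{1,\Th}\|\pv\|_{0,\Oh}$. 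That is not literally the right-hand side of \eqref{contpezzoNeumann}, but it is all the coercivity argument uses once combined with Proposition \ref{prop:poincare}.

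\textbf{The jump term.} Your pointwise splitting stalls on $(\pv-v)(\x)$, which nothing available controls, and you are aiming at the wrong target ($\tauN\|\pv\|_{0}$). The missing idea is to estimate the jump of the polynomial parts in $L^2$ on the patch. Since $\pv^+-\pv^-$ is a polynomial, \eqref{inverseDK} gives $\|\pv^+-\pv^-\|^2_{0,\etax}\lesssim(\delta/h^2)\|\pv^+-\pv^-\|^2_{0,K^+\cup K^-}$. Because $v\in H^1(K^+\cup K^-)$ is single valued, $\pv^+-\pv^-=(\pv^+-v)-(\pv^--v)$ on $K^+\cup K^-$, and each piece is $O(h|v|_{1,K^+\cup K^-})$ in $L^2(K^+\cup K^-)$. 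Hence $\|[\pv]\|_{0,\etax}\lesssim\delta^{1/2}|v|_{1,K^+\cup K^-}$, and the jump term is $\lesssim(\delta/h)\,|\pu|_{1,K^+\cup K^-}|v|_{1,K^+\cup K^-}$. It goes into the $\tau$ term (with $|v|_1$ rather than $|\pv|_1$), with no $\tauN$, no logarithm, and no need for your $H^s_0(\DK\cup\Delta_{E'})$ fallback. Your pointwise idea survives if you bound $\|\pv^+-\pv^-\|_{\infty,\etax}\lesssim h^{-1}\|\pv^+-\pv^-\|_{0,K^+\cup K^-}$ instead of evaluating $v$ at $\x$.

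\textbf{Dirichlet/Neumann vertices.} Nothing cancels completely there. With $\x\in\XN$, as in the paper, the correction term leaves the Dirichlet-side term $\int_{\etax}\nabla\pu^-\cdot\taux^-\pv^-$. This is controlled through $\|\pv\|_{0,K^-}\lesssim h|\pv|_{1,K^-}+h^{1/2}\|\pv\|_{0,E^-}$, i.e.\ by the Dirichlet boundary part of $\Norm{\cdot}$, not by a jump argument.
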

	
	\begin{proof}
We start by observing that
		\begin{multline*}
			\sum_{\x  \in \XN} \int_{\etax}  [\nabla \pu\cdot \taux ]  \{ \pv \} = \sum_{\x  \in \XN} \int_{\etax} 
			(\nabla \pu^+ \cdot \taux^+ + \nabla\pu^- \cdot \taux^- )(\pv^+ + \pv^-)/2 \\=  \sum_{\x  \in \XN}  \left(
			\int_{\etax} \nabla \pu^+ \cdot \taux^+ \pv^+ 
			+ \int_{\etax} \nabla \pu^- \cdot \taux^- \pv^- 
		-  \int_{\etax} \nabla \{\pu \} \cdot \taux \jump{\pv}  	\right),
		\end{multline*}
		where $\pi_u^+$ (resp. $\pi_u^-$) are the restrictions of the discontinuous polynomial function $\pi_u$ to, respectively, $\Delta_{E^+}$ and $\Delta_{E^-}$ (see Figure \ref{fig:geometry}).

		\NOTE{
		\begin{multline*}		 
			(\nabla \pu^+ \cdot \taux^+ + \nabla\pu^- \cdot \taux^- )(\pv^+ + \pv^-)/2 \\
		= 	\nabla \pu^+ \cdot \taux^+ \pv^+  + \nabla\pu^- \cdot \taux^- \pv^+/2  
		+ 	\nabla \pu^+ \cdot \taux^+ \pv^-/2 + \nabla\pu^- \cdot \taux^- \pv^-
		- \nabla \pu^+ \cdot \taux^+ \pv^+/2  
-		\nabla\pu^- \cdot \taux^- \pv^-/2 \\
=
\nabla \pu^+ \cdot \taux^+ \pv^+  
 + \nabla\pu^- \cdot \taux^- \pv^-
+ \nabla\pu^- \cdot \taux^- \pv^+/2  
+ 	\nabla \pu^+ \cdot \taux^+ \pv^-/2
- \nabla \pu^+ \cdot \taux^+ \pv^+/2  
-		\nabla\pu^- \cdot \taux^- \pv^-/2 
		\end{multline*}	
We can write
\begin{multline*}
-	 \nabla\pu^- \cdot \taux \pv^+/2  
	+ 	\nabla \pu^+ \cdot \taux^+ \pv^-/2
	- \nabla \pu^+ \cdot \taux^+ \pv^+/2  
	+	\nabla\pu^- \cdot \taux \pv^-/2 \\
	= 
	+ 	\nabla \pu^+ \cdot \taux (\pv^- - \pv^+)/2
	+	\nabla\pu^- \cdot \taux (\pv^- - \pv^+)/2 \\
	=\nabla\left(\frac{\pu^- + \pu^+}2\right) \cdot \taux (\pv^- - \pv^+)
\end{multline*}
	}
		We also observe that, letting $\x$ and $\x'$ be the two macro vertexes of $E$, and assuming, to fix the ideas, that both $\taux^+$ and $\nu_{\x'}^+$ point outwards from $\DK$, we have
		\begin{multline*}
		-\int_E \dnh{\pu} \pv + \int_{\widetilde E} \dn{\pu} \pv +  \int_{\etax} \nabla \pu^+ \cdot \taux^+ \pv^+ + \int_{\eta_{\x'}}  \nabla \pu^+ \cdot \nu^+_{\x'} \pv^+  \\
		= 
		 \int_{\DK} \Delta \pu \pv + \int_{\DK} \nabla \pu \cdot \nabla \pv.	\end{multline*}
		
		Then we have that
		\begin{multline}\label{boundI1}
			- \int_{\GhN} \dnh{\pu} \,
			\pv	+ \int_{\GN} \dn{\pu}\,  \pv + \sum_{\x  \in \XN} \int_{\etax}  [\nabla \pu\cdot \taux ]  \{ \pv \}
		\\	=\underbrace{ \sum_{E\subseteq \GhN} \left(\int_{\DK} \nabla \pu \cdot \nabla \pv + \int_{\DK} \Delta \pu \pv \right)}_{B_1}
			- \underbrace{\sum_{\x \in \XN}  \int_{\etax} \nabla \{\pu \} \cdot \taux \jump{\pv} }_{B_2}
			+ \underbrace{\sum_{\x \in \DNnodes} \int_{\etax }\nabla\pu^- \cdot \taux^- \pu^-  }_{B_3}
		\end{multline}
	where $\DNnodes$ is the set of nodes shared by a Dirichlet and a Neumann macro edge.
	
	\
	
				We bound the three terms separately, starting with $B_1$. 
We  observe that the linear operator $\scrG: L^2(\DK) \to \mathbb{R}$ defined as
			\begin{equation*}
			\scrG(v) = \int_{\DK} \Delta \pu v 
			\end{equation*}
			satisfies
			\begin{gather*}
			| \scrG(v) | \leq \| \Delta \pu \|_{0,\DK} \| v \|_{0,\DK}, \ \forall v \in L^2(\DK), \\	| \scrG(v) | \leq \| \Delta \pu \|_{-1,\DK} \| v \|_{1,\DK} \leq \| \nabla \pu \|_{0,\DK}  \| v \|_{1,\DK} \forall v \in H^1_0(\DK).
			\end{gather*}
		\NOTE{
	The constants are one with the unscaled norms: indeed we have that 
	\[
	\int_{\DK} \Delta u v = \int_{\DK} \nabla u \cdot \nabla v \leq | u |_{1,\DK} | v |_{1,\DK} \leq | u |_{1,\DK} | v |_{1,\DK}.
	\]	
	The definition of the $H^s_0$ norm is the one compatible with interpolation, as it is defined by extension.
	}
		
			By interpolation this implies that for all $v \in H^s_0(\DK) = [L^2(\DK),H^1_0(\DK)]_s$, it holds that
			\[
			|  \scrG(v) | \leq \| \nabla \pu \|^s_{0,\DK} \| \Delta \pu \|^{1-s}_{0,\DK} \| v \|_{H^s_0(\DK)}.
			\]
			
			\
			
 We can now combine this bound with Corollary \ref{cor:embedding}, and, recalling that $\pv \in H^s_0(\DK)$ for all $s<1/2$, we can write
			\begin{multline*}
			 \int_{\DK} \Delta \pu \pv = \scrG(\pv) \lesssim \| \nabla \pu \|^s_{0,\DK} \| \Delta \pu \|^{1-s}_{0,\DK} \| \pv \|_{H^s_0(\DK)}
	\\	\lesssim  \left(
	\frac \delta h 
	\right)^{1/2}  \| \nabla \pu \|^s_{0,\DK} 
	\| \Delta \pu \|^{1-s}_{0,K} \frac{1}{\sqrt{\varepsilon}} \left(
		\frac \delta h 
		\right)^{\varepsilon} 
	 {h^{-s}  } \| \pv \|_{0,K} \\
		\lesssim 
		 \left(
		\frac \delta h 
		\right)^{1/2}  | \pu |_{1,K} h^{-1} 
	\frac{1}{\sqrt{\varepsilon}} \left(
\frac \delta h  
\right)^{\varepsilon} \| \pv \|_{0,K} 
\\= \frac 1{\sqrt{\varepsilon}} \left(
\frac \delta h  
\right)^{\varepsilon}  	 \left(
\frac \delta {h^{3/2}} 
\right)^{1/2}  | \pu |_{1,K}  \| \pv \|_{0,K},
	\end{multline*}
with $\varepsilon = 1/2 - s$.
 I then choose $\varepsilon = 1/| \log(\delta/h) |$, which yields
	\[
	\frac 1{\sqrt{\varepsilon}} \left(
	\frac \delta h  
	\right)^{\varepsilon}  \lesssim \sqrt{| \log(\delta/h) |}  \lesssim 1.
	\]
Then
	\begin{equation}
		 \int_{\DK} \Delta \pu \pv \lesssim  \left(
		 \frac \delta {h^{3/2}} 
		 \right)^{1/2}  | \pu |_{1,K}  \| \pv \|_{0,K},
	\end{equation}
and this gives us
	\begin{equation*}
		\int_{\DK}  \nabla {\pu^K} \cdot \nabla {\pv^K} + \int_{\DK} \Delta \pu^K \pv^K
		\lesssim \	\frac{\delta}{h} |  \pu |_{1,K} |  \pv |_{1,K} + \frac \delta {h^{3/2}} |  \pu |_{1,K}  \| \pv \|_{0,K}.
	\end{equation*}
		Adding up the contributions from the different Neumann macro edges we then can write
		\begin{gather*}
			B_1
			\lesssim \frac \delta h | \pu |_{1,\ThN} |  \pv |_{1,\ThN} +  \frac \delta {h^{3/2}} | \pu |_{1,\ThN} \| \pv \|_{0,\ThN}.
		\end{gather*}

To bound $B_2$ we observe that using \eqref{inverseDK}
		we can write
		\begin{multline}\label{boundjumppv}
			\| \jump{\pv} \|^2_{0,\etax}
			\lesssim \frac \delta {h^2}\| \jump{\pv} \|^2_{0,K^+ \cup K^-} 
			= \frac \delta  {h^2} \| \jump{\pv - v} \|_{0,K^+ \cup K^-}\\
			\lesssim \frac \delta  {h^2} \| \pv^+ - v \|^2_{0,K^+ \cup K^-} + \frac \delta  {h^2}  \| \pv^- - v \|_{0,K^+ \cup K^-} \lesssim \delta | v |^2_{1,K^ + \cup K^-}
		\end{multline}
		Then, we can write
		\[
		\int_{\etax} \nabla \{ \pu \} \cdot \taux \jump{\pv} \lesssim  \| \nabla \{ \pu \} \cdot \taux \|_{0,\etax}  \| \jump{\pv} \|_{0,\etax} \lesssim  \frac {\delta} {h} \left(\| \nabla \pu^+  \|_{0,K^+} + \| \nabla \pu^-  \|_{0,K^-}  \right)  | v |_{1,K^+ \cup K^-} 
		\]
		whence
		\[
		B_2 \lesssim\frac {\delta} {h} | \pu |_{1,\ThN} | v |_{1,\ThN}.
		\]



		\NOTE{Poincar\'e type bounds on Dirichlet boundary elements elements
			\begin{multline*}
				\| u \|^2_{0,K} \leq \| u - \bar u^E \|^2_{0,K} + \| \bar u^E \|^2_{0,K} \lesssim h^2 | u |^2_{1,K} + | K | \, | E |^{-2} | \int_E u |^2 \\
				\lesssim h^2 | u |^2_{1,K} + | K | \, | E |^{-1}  \int_E | u |^2 \simeq h^2 | u |_{1,K}^2 + h \| u \|_{0,E}^2
			\end{multline*}
			whence
			\[
			\| u \|_{0,K} \lesssim h | u |_{1,K} + h^{1/2} \| u \|_{0,E}.
			\]
		}
	
	In order to bound $B_3$ we observe that
		\begin{multline}
			\int_{\etax} \nabla \pu^- \cdot \taux^- \pv^- \leq \| \nabla \pu^- \cdot \taux^- \|_{0,\etax} \| \pv^- \|_{0,\etax} \lesssim 
			\frac \delta {h^2}  \| \nabla \pu  \|_{0,K^-} 
			\| \pv \|_{0,K^-}\\  \lesssim 
			\frac \delta {h^2} | \pu |_{1,K^-} (h | \pv |_{1,K^-} 
			+  h^{1/2}   \| \pv \|_{0,E^-}
			),
		\end{multline}
where	$K^-  \in \ThD$ is the element on the Dirichlet side with macro vertex $\x$, and  $E^-$ is the boundary macro edge of $K^-$ that has $\x$ as a macro vertex. 		This gives 
		\[
		B_3 \lesssim  \sum_{\x \in \DNnodes} \frac \delta {h} | \pu |_{1,K^-}  \left(| \pv |_{1,K^-} 
		+   h^{-1/2}   \| \pv \|_{0,E^-}\right).
		\]
Combining all the bounds we obtain	\eqref{contpezzoNeumann}.		\end{proof}

	The following corollary deals with the case $\chi = 0$.
	\begin{corollary} For $u,v \in V_h$ it holds that
		\begin{equation}	- \int_{\GhN} \dnh{\pu} 
			\, \pv	+ \int_{\GN} \dn{\pu}  \, \pv	
			\lesssim
			\tau \Norm{ u } \Norm{ v } + \tauN  | \pu |_{1,\Th} \| \pv \|_{0,\Oh} 
			.
		\end{equation}
	\end{corollary}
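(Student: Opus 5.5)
The plan is to derive the corollary from Lemma \ref{lem:boundI} by subtracting the vertex correction term. Writing
\[
- \int_{\GhN} \dnh{\pu}\,\pv + \int_{\GN} \dn{\pu}\,\pv = \Big(\text{left-hand side of \eqref{contpezzoNeumann}}\Big) - \sum_{\x\in\XN}\int_{\etax} [\nabla\pu\cdot\taux]\{\pv\},
\]
the first group is already bounded by $\tau |\pu|_{1,\Th}|\pv|_{1,\Th} + \tauN |\pu|_{1,\Th}\|\pv\|_{0,\Oh}$. Since $|\pu|_{1,K}\le |u|_{1,K}$ by definition of $\PinablaK$, this first group is in the desired form. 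It therefore suffices to bound the vertex term by $\tau\Norm{u}\Norm{v}$.

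For the vertex term I would use the splitting already computed in the proof of Lemma \ref{lem:boundI}:
\[
\int_{\etax} [\nabla\pu\cdot\taux]\{\pv\} = \int_{\etax}\nabla\pu^+\cdot\taux^+\,\pv^+ + \int_{\etax}\nabla\pu^-\cdot\taux^-\,\pv^- - \int_{\etax}\nabla\{\pu\}\cdot\taux\,\jump{\pv}.
\]
The jump piece is bounded exactly as $B_2$, giving $(\delta/h)|\pu|_{1,\Th}|v|_{1,\Th}\le \tau\Norm{u}\Norm{v}$.

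The one-sided pieces are the main obstacle. Estimating them directly with Proposition \ref{prop:2.2} gives $(\delta/h^2)|\pu|_{1,K}\|\pv\|_{0,K}$, which is not of order $\tau$ times a norm. I would handle them as follows.

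\begin{itemize}
\item \emph{Pair the contributions at each Neumann–Neumann vertex.} The two one-sided terms combine to $\int_{\etax}(\nabla\pu^+-\nabla\pu^-)\cdot\taux\,\pv^+$ plus a jump term $\int_{\etax}\nabla\pu^-\cdot\taux^-\jump{\pv}$. The jump term is treated like $B_2$.
\item \emph{Bound the gradient jump through $u$.} For the remaining term I would write $\nabla\pu^+-\nabla\pu^-$ as the difference of two polynomials approximating the same $H^1$ function across neighbouring elements. Using \eqref{inverseDK} on the gradient together with the Poincaré-type inequality \eqref{polypoincare}, I would aim for a bound of the form $(\delta/h)\big(|\pu|_{1,\Th}+|u-\pu|_{1,\Th}\big)\big(h^{-1/2}\|\pv\|_{0,\etax}\cdots\big)$.
\item \emph{Absorb the $\tauN$ contribution.} Where the pairing only closes in the form $\delta h^{-3/2}|\pu|_{1,K}\|\pv\|_{0,K}$, I would keep that contribution in the $\tauN |\pu|_{1,\Th}\|\pv\|_{0,\Oh}$ term, which is allowed by the statement.
\item \emph{Dirichlet–Neumann vertices.} At nodes in $\DNnodes$, the $B_3$ estimate already yields $(\delta/h)|\pu|_{1,K^-}\big(|\pv|_{1,K^-}+h^{-1/2}\|\pv\|_{0,E^-}\big)$, which is $\lesssim\tau\Norm{u}\Norm{v}$.
\end{itemize}

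Finally I would sum over vertices using Assumption \ref{assmp:shapereg} to control the overlap, and use $|\pv|_{1,\Th}\le|v|_{1,\Th}$ to conclude.
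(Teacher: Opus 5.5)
\textbf{There is a genuine gap: the step meant to handle the one-sided pieces does not work.} Your reduction to Lemma~\ref{lem:boundI} is fine, and so is your treatment of the $\jump{\pv}$ pieces.

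The pairing is a round trip. Since $\{\pv\}=\pv^+-\tfrac12(\pv^+-\pv^-)$, the leftover term $\int_{\etax}(\nabla\pu^+-\nabla\pu^-)\cdot\taux\,\pv^+=\int_{\etax}[\nabla\pu\cdot\taux]\,\pv^+$ is, up to a jump term, the vertex term you set out to remove.

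Nothing makes $\nabla\pu^+-\nabla\pu^-$ small. Here $u\in V_h$ is an arbitrary discrete function, and $\pu^\pm$ are projections on different elements. Already for $k=1$, a continuous piecewise linear $u$ with a kink between $K^+$ and $K^-$ has $|\nabla\pu^+-\nabla\pu^-|\simeq|\nabla u|$. Also, \eqref{polypoincare} controls $\|\pu\|_{0,\Oh}$, not gradient jumps. The best you can get is $\|\nabla(\pu^+-\pu^-)\|_{0,\etax}\lesssim\delta^{1/2}h^{-1}(|\pu|_{1,K^+}+|\pu|_{1,K^-})$ from \eqref{inverseDK}. That brings you back to $(\delta/h^2)\,|u|_{1,K^+\cup K^-}\|\pv\|_{0,K^+}$, exactly where you started.

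So your third bullet has nothing to act on. No step produces the factor $\delta h^{-3/2}$; on $\GhN$ one only has $\delta/h^2\le\tauN h^{-1/2}$. The Neumann-side piece at the nodes of $\DNnodes$ has the same problem: the $B_3$ argument covers only the Dirichlet side, where $h^{-1/2}\|\pv\|_{0,E^-}$ is available.

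\textbf{How the paper proceeds, and what closes the step.} The paper does not pair. It applies \eqref{inverseDK} to each one-sided piece, obtains $(\delta/h^2)\|\nabla\pu^\pm\|_{0,K^\pm}\|\pv\|_{0,K^\pm}$, and books this under the $\tauN$ term. Your remark that this estimate is too weak as it stands is correct. The missing factor $h^{1/2}$ comes from the fact that these terms live only on the layer of boundary elements, not from any smallness of $\pu$.

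On each such $K$, split $\pv=(\pv-\fint_K v)+\fint_K v$, recalling that $\fint_K\pv=\fint_K v$. The oscillating part gives $(\delta/h)|\pu|_{1,K}|v|_{1,K}$. The constant part gives $(\delta/h)|\pu|_{1,K}|\fint_K v|$. A scaled trace/Poincar\'e inequality then gives $\sum_K|\fint_K v|^2\lesssim h^{-1}\|v\|^2_{0,\Gh}+|v|^2_{1,\Oh}$, with the sum over elements adjacent to Neumann macro vertices. Summing,
\[
\Bigl|\sum_{\x\in\XN}\sum_{\pm}\int_{\etax}\nabla\pu^\pm\cdot\taux^\pm\,\pv^\pm\Bigr|\lesssim\tau\,|\pu|_{1,\Th}|v|_{1,\Th}+\tauN\,|\pu|_{1,\Th}\|v\|_{0,\Gh}.
\]
Moreover, $\|v\|_{0,\Gh}\lesssim\Norm{v}$ by a trace inequality and the Poincar\'e inequality at the start of \eqref{poinc1}. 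This yields a bound $(\tau+\tauN)\Norm{u}\Norm{v}$, which is what the continuity and coercivity arguments need. Note that the $\tauN$ term then carries a boundary norm of $v$ rather than $\|\pv\|_{0,\Oh}$.
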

	
	\begin{proof}
		We start by observing that we have the bound
		\[
		\int_{\etax}\nabla \pu^+ \cdot \etax^+ \pv^{\pm} \leq \| \nabla \pu^+ \cdot \etax^+ \|_{0,\etax} \| \pv^\pm \|_{0,\etax} \lesssim \frac \delta {h^2}
		\| \nabla \pu^+ \|_{0,K^+} \| \pv \|_{0,K^\pm}.
		\]
	Then we can write
	\begin{multline*}
		- \int_{\GhN} \dnh{\pu} 
		\, \pv	+ \int_{\GN} \dn{\pu}  \, \pv \\ 
		\lesssim 
		\tau \Norm{ u } \Norm{ v } + \max_{x \in \GhN} \tauN | \pu |_{1,\Th} \| \pv \|_{0,\Oh} + | \pu |_{1,\ThN} | v - \pv |_{1,\ThN} \\ + \Big| \sum_{\x  \in \XN} \int_{\etax}  [\nabla \pu\cdot \taux ]  \{ \pv \} \big| \\
		\lesssim 	\tau  \Norm{ u } \Norm{ v } + \tauN | \pu |_{1,\Th} \| \pv \|_{0,\Oh} + | \pu |_{1,\ThN} | v - \pv |_{1,\ThN} .
	\end{multline*}
		\end{proof}

	To conclude, we observe that we have
	\begin{multline*}
		\int_{\GhN} \dnh{\pu}  \,(v - \pv) \leq \| \dnh{\pu}  \|_{0,\GhN} \| v - \pv \|_{0,\GhN} \\
		\lesssim
		h^{1/2} \| \dnh{\pu}  \|_{0,\GhN} (h^{-1} \| v - \pv \|_{0,\ThN} +  | v - \pv |_{1,\ThN}) \lesssim | \pu |_{1,\ThN} | v - \pv |_{1,\ThN}.
	\end{multline*}
Using \eqref{condmesh} and \eqref{polypoincare} we then obtain the continuity bound \eqref{continuity}.

%
%
%
%
%
%
%
%
%
	
	\subsection{Coercivity}  
	
Let us now deal with the coercivity of the bilinear form $\ah $. 
The following lemma holds.	
\begin{lemma}\label{lem:coercivity} There exist $\beta_0$ and $\gamma_0$ such that, if $\beta > \beta_0$ and $\gamma > \gamma_0$ then the following holds: there exist $\tau_0$ such that if $\tauD<\tau_0$ and $\tauN<\tau_0$, then 
	\begin{equation}\label{coercivity}
	\ah (v,v) \gtrsim \Norm{v}^2 \qquad \forall v \in \Vh.
	\end{equation}
\end{lemma}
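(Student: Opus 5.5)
We set $u=v$ in the definition of $\ah$ and split the resulting expression into a Dirichlet part and a Neumann part.

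The \emph{Dirichlet part} consists of the volume term, the stabilization, the boundary term $-\int_{\GhD}\dnh{\pv}\,v$ and the two Nitsche-type terms on $\GhD$. We treat it exactly as in the Dirichlet analysis of \cite{VEM_curvo,VEM_weakly}. Writing $\E(\pv)=\pv+(\E(\pv)-\pv)$, the consistency term $\int_{\GhD}\E(\pv)\dnh\pv$ nearly cancels $-\int_{\GhD}\dnh\pv\,v$. What remains are
\begin{itemize}
\item the term $-\int_{\GhD}\dnh\pv\,(v-\pv)$,
\item Taylor remainder terms bounded, via \eqref{inversebase} and \eqref{tracePoly}, by $\tau|\pv|_{1,\Th}^2$,
\item the penalty $\gamma h^{-1}\int_{\GhD}\E(\pv)\hE(\pv)$, which is bounded below by $\gamma h^{-1}\|\pv\|^2_{0,\GhD}$ minus $\tau$-small terms.
\end{itemize}
We bound $|\int_{\GhD}\dnh\pv\,(v-\pv)|\lesssim|\pv|_{1,\Th}|v-\pv|_{1,\Th}$ using the trace inequality and the Aubin--Nitsche estimate used in the continuity proof. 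Young's inequality absorbs it, with $\beta>\beta_0$ (using $s^K(v-\pv,v-\pv)\ge\alpha_*|v-\pv|^2_{1,K}$) and $\gamma>\gamma_0$. This yields
\[
\frac12\big(|\pv|^2_{1,\Th}+\beta\alpha_*|v-\pv|^2_{1,\Th}+\gamma h^{-1}\|\pv\|^2_{0,\GhD}\big)-C\tau|\pv|^2_{1,\Th}.
\]
Finally, $h^{-1}\|v\|^2_{0,\GhD}\lesssim h^{-1}\|\pv\|^2_{0,\GhD}+|v-\pv|^2_{1,\Th}$ recovers $\Norm{v}^2$, since $|v|_{1,\Th}\le|\pv|_{1,\Th}+|v-\pv|_{1,\Th}$.

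The \emph{Neumann part} is
\[
-\int_{\GhN}\dnh\pv\,v+\int_{\GN}\dn\pv\,\pv+\switch\sum_{\x\in\XN}\int_{\etax}[\nabla\pv\cdot\taux]\{\pv\}.
\]
We replace $v$ by $\pv$ in the first integral, which costs $|\pv|_{1,\ThN}|v-\pv|_{1,\ThN}$ and is absorbed as above. We then apply Lemma \ref{lem:boundI} with $u=v$ (or the corollary following it when $\switch=0$). This bounds the remainder by $\tau|\pv|^2_{1,\Th}+\tauN|\pv|_{1,\Th}\|\pv\|_{0,\Oh}$. The only non-standard quantity is $\|\pv\|_{0,\Oh}$, which we control with the discrete Poincar\'e inequality, Proposition \ref{prop:poincare}:
\[
\|\pv\|_{0,\Oh}\lesssim |\pv-v|_{1,\Th}+|\pv|_{1,\Th}+\Big(\sum_{e\subset\GhD}\|\pv\|^2_{0,e}\Big)^{1/2},
\]
and $\|\pv\|_{0,\GhD}\le h^{1/2}\,h^{-1/2}\|\pv\|_{0,\GhD}$. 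Hence the Neumann perturbation is bounded by $C(\tau+\tauN)\big(|\pv|^2_{1,\Th}+|v-\pv|^2_{1,\Th}+h^{-1}\|\pv\|^2_{0,\GhD}\big)$.

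Collecting the two parts, with $\beta,\gamma$ fixed above their thresholds, gives
\[
\ah(v,v)\ge c(\beta,\gamma)\Normup{v}^2-C(\tau+\tauN)\Normup{v}^2,
\]
where $\Normup{\cdot}$ here means the same combination with constant $1$ in place of $A^*(h)$. Choosing $\tau_0$ small enough that $C(\tau+\tauN)\le c/2$ then yields \eqref{coercivity}.

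The main obstacle is the bookkeeping in the Dirichlet block. One must check that the Taylor remainders $\E(\pv)-\pv$ and $\hE(\pv)-\pv$ contribute only $\tau$-small multiples, not multiples of $\gamma\tau$ that could defeat the smallness. This requires $\tau_0$ to depend on $\gamma$, as the statement allows. In the Neumann block, the corresponding delicate point is that the dependence on $\beta$ enters only through absorption; no constant may blow up with $\beta$.
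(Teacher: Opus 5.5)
Your proposal is correct and follows the paper's proof: Lemma~\ref{lem:boundI} (or its corollary when $\switch=0$) together with the discrete Poincar\'e inequality of Proposition~\ref{prop:poincare} handles the Neumann block, the Dirichlet block is treated as in \cite{VEM_curvo}, and one fixes $\beta$ and $\gamma$ first and then $\tau_0$ depending on them. The differences are only bookkeeping, and yours is, if anything, cleaner. You use the exact cancellation $-\int_{\GhD}\dnh{\pv}\,v+\int_{\GhD}\E(\pv)\dnh{\pv}=-\int_{\GhD}\dnh{\pv}\,(v-\pv)+\int_{\GhD}(\E(\pv)-\pv)\dnh{\pv}$, absorb the first term through $\beta$, and measure the boundary part with $h^{-1}\|\pv\|^2_{0,\GhD}$. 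The paper instead carries a term $B=\int_{\GhD}\dnh{\pv}\,\pv$, absorbed through $\gamma>\gamma_0$, and works with the equivalent quantity $h^{-1}\|\hE(\pv)\|^2_{0,\GhD}$.
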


\begin{proof} We
observe that  Proposition \ref{inversebase} implies that
\begin{gather}
h^{-1/2}	\| \E(\pu) - \hE(\pu) \|_{0,\GhD} \lesssim \frac \delta h\, | \pu |_{1,\ThD}, \qquad h^{-1/2}	\| \pu - \hE(\pu) \|_{0,\GhD} \lesssim \frac \delta h\, | \pu |_{1,\ThD},
\end{gather}
which yields the following
 norm equivalence
\[
\Norm{u}^2\simeq | u |^2_{1,\Oh} +  h^{-1} 
\| \hE(\pu) \|^2_{0,\GhD}.
\]
It is convenient to prove coercivity with respect to the norm on the \replaced[id=mm]{right hand}{ righthand} side. We have
\begin{multline}\ah (u,u) \geq	
	\int_{\Th} | \nabla \pu |^2   
	+ \beta \alpha_* | u-\pu |^2_{1,\Th}  + \gamma h^{-1} 
	\| \hE(\pu) \|^2_{0,\GhD}  \\ \underbrace{- \int_{\GhN} \dnh{\pu}\, u 
		+ \int_{\GN} \dn{\pu}\,  \pu 
		+  \sum_{\x  \in \XN}  \int_{\etax}  [\nabla \pu\cdot \taux  ]  \{ \pu \} }_A
	\\-\underbrace{\int_{\GhD} \dnh{\pu}  \pu}_B + \underbrace{\int_{\GhD}  (\E(\pu) - \pu) \dnh{\pu}  }_C
	\\ + \underbrace{\gamma h^{-1} \int_{\GhD} (\E(\pu) - \hE(\pu)) \hE(\pu).}_D
\end{multline}


\NOTE{
\begin{multline}A = - \int_{\GhN} \dnh{\pu}\, u 
	+ \int_{\GN} \dn{\pu}\,  \pu 
	+  \sum_{\x  \in \XN}  \int_{\etax}  [\nabla \pu\cdot \taux  ]  \{ \pu \} \\=
	- \int_{\GhN} \dnh{\pu}\, \pu  - \int_{\GhN} \dnh{\pu}\, (u-\pu)
	+ \int_{\GN} \dn{\pu}\,  \pu 
	+  \sum_{\x  \in \XN}  \int_{\etax}  [\nabla \pu\cdot \taux  ]  \{ \pu \} \\
	\lesssim  | \pu |_{1,\ThN} | v - \pu |_{1,\ThN} + \tau  | \pu |^2_{1,\Th}  +  \tauN | \pu |_{1,\Th} \| \pu \|_{0,\Oh}\\
	\frac \varepsilon 2 | \pu |_{1,\ThN} + \frac 1 {2\varepsilon} | v - \pu |^2_{1,\ThN} +  \tau  | \pu |^2_{1,\Th}  + \frac \varepsilon 2 |\pu |_{1,\Th}^2 
	+ \frac {\tauN^2} {2\varepsilon} 
	\| \pu \|^2_{0,\Oh}
	\\
	\lesssim 
	\frac \varepsilon 2 | \pu |_{1,\ThN} + \frac 1 {2\varepsilon} | v - \pu |^2_{1,\ThN} +  \tau  | \pu |^2_{1,\Th}  + \frac \varepsilon 2 |\pu |_{1,\Th}^2 
	+ \frac {\tauN^2} {2\varepsilon} 
	( | \pu - u |_{1,\Th}^2 + | \pu |_{1,\Th}^2 
	+ \| \pu \|_{0,\GhD}^2 ) \\
	= \left(
	\varepsilon + \tau + \frac {\tauN^2}{2 
		\varepsilon}
	\right) | \pu |_{1,\Th}^2  + 
	\frac {1 + \tauN^2} {2\varepsilon} 
	| \pu - u |_{1,\Th}^2  + \frac{\tauN^2}{2\varepsilon}
	\| \hE(\pu) \|_{0,\GhD}^2+ \underbrace{\frac{\tauN^2}{2\varepsilon}  h 
\tau^2	| \pu \|_{1,\Th}^2 }_{\lesssim \tauN^2 | \pu |_{1,\Th}/(2\varepsilon)}
	\\
\lesssim	
\left(
\varepsilon + \tau + \frac {\tauN^2}{2 
	\varepsilon}
\right) | \pu |_{1,\Th}^2  + 
\frac {1 + \tauN^2} {2\varepsilon} 
| \pu - u |_{1,\Th}^2  + \frac{\tauN^2}{2\varepsilon}
\| \hE(\pu) \|_{0,\GhD}^2
\end{multline}
}

%
%

Combining Lemma \ref{lem:boundI} and Proposition \ref{prop:poincare} we can bound
\[| A | \leq C_A \left[\left(
\varepsilon + \tau + \frac {\tauN^2}{2 
	\varepsilon}
\right) | \pu |_{1,\Th}^2  + 
\frac {1 + \tauN^2} {2\varepsilon} 
| \pu - u |_{1,\Th}^2  + \frac{\tauN^2}{2\varepsilon}
\| \hE(\pu) \|_{0,\GhD}^2 \right]. 
\]
$| B |$, 
$| C |$ and $| D |$ can be bound as in \cite{VEM_curvo,bertoluzza2024virtual} . More precisely, we can 
bound $| B|$ as

\NOTE{
\begin{multline*}
	| B | \leq  \| \dnh \pu \|_{0,\GhD}  \| \pu \|_{0,\GhD} \lesssim \frac {\varepsilon'} 2 h \| \dnh \pu \|_{0,\GhD}^2 + \frac 1 {2\varepsilon'} h^{-1} \| \pu \|_{0,\GhD}^2\\
	 \lesssim
	\frac {\varepsilon'} 2 | \pu |_{1,\Th}^2 + \frac 1 {2\varepsilon'} h^{-1} \| \pu - \hE(\pu)\|_{0,\GhD}^2 + \frac 1 {2\varepsilon'} h^{-1}\| \hE(\pu)\|_{0,\GhD}^2\\
	\lesssim \frac {\varepsilon'} 2 | \pu |_{1,\Th}^2 +  \frac 1 {2\varepsilon'} \tauD | \pu |_{1,\Th}^2 + \frac 1 {2\varepsilon'} h^{-1} \| \hE(\pu)\|_{0,\GhD}^2 \\
	\lesssim \left(\frac {\varepsilon'} 2  +  \frac \tauD {2\varepsilon'}  \right) | \pu |_{1,\Th}^2 + \frac 1 {2\varepsilon'} h^{-1}\| \hE(\pu)\|_{0,\GhD}^2 
	\end{multline*}
}

\[
| B | \leq C_B \left[
\left(\frac {\varepsilon'} 2  +  \frac \tauD {2\varepsilon'}  \right) | \pu |_{1,\Th}^2 + \frac 1 {2\varepsilon'} h^{-1} \| \hE(\pu)\|_{0,\GhD}^2
\right].
\]

\NOTE{
\begin{equation*}
	C \leq  \| \E(\pu) - \pu  \|_{0,\GhD} \| \dnh{\pu} \|_{0,\GhD}  = h^{-1/2} \| \E(\pu) - \pu  \|_{0,\GhD}  h^{1/2}\| \dnh{\pu} \|_{0,\GhD} \lesssim
	\tau | \pu |_{1,\Th}^2	
	\end{equation*}
}

Moreover we have
\[
| C | \leq  \| \E(\pu) - \pu  \|_{0,\GhD} \| \dnh{\pu} \|_{0,\GhD} \leq 
C_C
\tauD
| \pu |_{1,\Th}^2
\]
and 
\[
| D | \leq \gamma h^{-1} \| \E(\pu) - \hE(\pu) \|_{0,\GhD} \| \hE(\pu) \|_{0,\GhD} \leq \frac{\gamma} 2  h^{-1} \| \hE(\pu) \|^2_{0,\GhD} + C_D \tauD\gamma | \pu |^2_{1,\Th}.\]

\newcommand{\dN}{\delta^N}
\newcommand{\dD}{\delta^D}

Using such bounds we obtain
\begin{multline*}
	\ah (u,u) \geq	
| \pu |_{1,\Th}^2
	+\beta \alpha_* | u-\pu |^2_{1,\Th}  + \gamma h^{-1} 
	\| \hE(\pu) \|^2_{0,\GhD}  
	\\
	- \underbrace{\left[C_A\left(
\varepsilon + \tau + \frac {\tauN^2}{2 
	\varepsilon}
\right)  + C_B 
\left( {\varepsilon}  +   \frac \tauD {4\varepsilon}  \right)	+ C_C
\tauD
 + C_D \tauD\gamma 
\right]}_{=I}| \pu |^2_{1,\Th}
\\
 - \underbrace{\left[C_A \frac{\tauN^2}{2\varepsilon}
+ C_B  \frac 1 {4\varepsilon} 
	+ \frac{\gamma} 2 \right]}_{II} h^{-1} \| \hE(\pu) \|^2_{0,\GhD} 
	- \underbrace{C_A
	\frac {1 + \tauN^2} {2\varepsilon} }_{III}
	| \pu - u |_{1,\Th}^2
\end{multline*}
We can now take $\varepsilon$ such that $(C_A + C_B) \varepsilon = 1/2$. Then we take $\gamma$ such that $C_B / (4\varepsilon) < \gamma/2$ (that is $\gamma < \gamma_0 = 4 C_B / (C_A + C_B)$). Then $\gamma - \gamma/2 - C_B/(4\varepsilon) = \gamma/2 -  C_B/(4\varepsilon) = c_1 > 0$. Provided $C_A \tauN^2 / 2\varepsilon < c_1/2$ we have that $\gamma - II > 0$. Provided $\tau < \tau_0  = (C_A + C_B/(4\varepsilon) + C_C + C_D \gamma)^{-1}/6$ and that $\tauN < \widehat{\tau}_0 = \sqrt{2\varepsilon/(6C_A)}$, we have that $I < 5/6$. Choosing $\beta > \beta_0$ with $\beta_0 = C_A (1 + \widehat{\tau}_0)/(2 \varepsilon \alpha_*)$ yields that $III < \beta \alpha_*$. 
Then \eqref{coercivity} holds with an implicit constant independent of $h$.
\end{proof}

\newcommand{\puI}{\pi_u^I}
\newcommand{\puh}{\pi_u^h}
\newcommand{\pe}{\pi_e}

\subsection{Consistency error}
We have the following Lemma

\begin{lemma}Let $u$ be the solution to problem \eqref{contpb} and assume that $u \in H^{k+1}(\O)$. Then we have that
	\[
	| \ah(u,v) - \fh(v) | \lesssim \left( A^*(h) h^k | u |_{k+1,\O} + h^{-1/2} \delta^{k+1} \| u \|_{k+1,\infty,\O} \right)  \Norm{v}.
	\]
\end{lemma}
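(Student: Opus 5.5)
The consistency error is a residual: we want to understand how far the exact solution $u$ is from satisfying the discrete equations. Since $u\notin V_h$ in general, I read $\ah(u,v)$ with $\pu$ replaced by a computable polynomial approximation. Concretely, I take $\pu := \Pi^\nabla u$ elementwise, extended polynomially to the strips $\DK$ and to $\G$ as in the definition of $\ah$, and keep the exact $u$ in the non-polynomial slots. The plan is to write $\fh(v)$ using the equation $-\Delta u = f$, the data $g^D = u$ on $\GD$ and $g^N = \nabla u\cdot \n$ on $\GN$, and to subtract term by term so that the difference $\ah(u,v)-\fh(v)$ becomes a sum of pieces each involving either $u-\pu$ or the Taylor remainder $u(\tx)-\E(u)(x)$.

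\textbf{Volume part.} I integrate by parts elementwise on $\Oh$:
\[
\int_{\Oh} f v=\sum_K\Big(\int_K \nabla u\cdot\nabla v-\int_{\partial K}\nabla u\cdot n_K\, v\Big).
\]
Interior fluxes cancel by continuity of $\nabla u$ and of $v$, leaving $-\int_{\Gh}\dnh{u}\,v$. Replacing $f\Pi^0_{k-2}v$ by $fv$ costs $\|f-\Pi^0_{k-2}f\|_{0}\,\|v-\Pi^0_0 v\|$, which is $\lesssim h^{k}|u|_{k+1}\Norm{v}$. The difference between $\int\nabla\pu\cdot\nabla\pv + \beta s_h(u-\pu, v-\pv)$ and $\int\nabla u\cdot\nabla v$ is handled by the standard VEM argument: polynomial consistency plus the upper bound $A^*(h)$ of the stabilization gives $A^*(h)h^k|u|_{k+1}\,|v|_{1,\Th}$. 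The term $\int_{\Gh}\dnh{(u-\pu)}\,v$ is controlled by trace inequalities together with the Dirichlet part of the norm and the Poincar\'e inequality of Proposition \ref{prop:poincare}. On $\GhD$ I replace $\widetilde g^D$ by $u(\tx)$; the difference $\widetilde g^D-\E(\pu)$ then equals $[u(\tx)-\E(u)]+\E(u-\pu)$. The first bracket is bounded by $\delta^{k+1}\|u\|_{k+1,\infty}$, which after the $h^{-1/2}$ weights and the $L^2(\GhD)$ norms yields the $h^{-1/2}\delta^{k+1}$ contribution. The second is an interpolation error, bounded exactly as in \cite{VEM_curvo,bertoluzza2024virtual}.

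\textbf{Neumann part.} Here the remaining quantity is
\[
-\int_{\GhN}\dnh{u}\,v+\int_{\GhN}\dnh{\pu}\,v-\int_{\GN}\dn{\pu}\,\pv+\int_{\GN}g^N\pv-\switch\sum_{\x\in \XN}\int_{\etax}[\nabla\pu\cdot\taux]\{\pv\}.
\]
Substituting $g^N=\dn{u}$ and regrouping, this becomes $\int_{\GN}\dn{(u-\pu)}\,\pv$ minus the same combination of strip boundary terms as in Lemma \ref{lem:boundI}, now applied to $u-\pu$ on the $\Gh$ side. I would then apply the identity from the proof of Lemma \ref{lem:boundI} on each $\DK$ to the function $w=u-\pu^{\KE}$. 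This function is smooth on $\DK$ because $u\in H^{k+1}(\O)$, and $\pu$ is extended polynomially. The identity gives volume terms $\int_{\DK}\nabla w\cdot\nabla\pv+\Delta w\,\pv$ plus vertex terms on $\etax$. Each of these is estimated using polynomial approximation of $u$ on a ball containing $K\cup\DK$ (Bramble--Hilbert on $B_K$) together with the bounds \eqref{inverseDK} for $\pv$. These produce factors $(\delta/h)^{1/2}$ or $\delta/h^{3/2}$ that are at most $1$, so the result is $h^k|u|_{k+1}\,(|\pv|_{1,\Th}+\|\pv\|_{0,\Oh})$, which Proposition \ref{prop:poincare} converts to $\Norm{v}$.

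\textbf{Main obstacle.} The delicate step is the Neumann part. It mixes the non-polynomial $v$ on $\Gh$ with $\pv$ on $\G$, and it requires estimating $\Delta(u-\pu)=-f-\Delta\pu$ on the thin strips. The mismatch $\int_{\GhN}\dnh{(u-\pu)}\,(v-\pv)$ is handled by trace inequalities on $K$ and costs $h^k|u|_{k+1}|v-\pv|_{1,\Th}$. The strip volume terms require controlling $u-\pu$ outside $K$, i.e., on $\DK$. Since the strip has width $\delta\le h$ and $\DK\subset B_K$, I would use the $H^{k+1}(B_K\cap\O)$ Bramble--Hilbert estimate for $\pu$ as a polynomial extension. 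If $\Pi^\nabla$ does not directly enjoy this property, I would first replace $\pu$ by an averaged Taylor polynomial of $u$ on $B_K$ and absorb the difference using stability of $\Pi^\nabla$.
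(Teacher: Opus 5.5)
Your decomposition matches the paper's. You split off the $\int_{\Oh} f(v-\Pi^0_{k-2}v)$ term, handle the VEM consistency term through $A^*(h)$, and take the Dirichlet terms from \cite{VEM_curvo}. You then rewrite the Neumann group in terms of $w=u-\pu$, using $g^N=\partial_n u$, $[\nabla u\cdot\taux]=0$ and the Green identity on each $\DK$ from the proof of Lemma \ref{lem:boundI}. The gap is in your final estimate of that Neumann group. You claim that Bramble--Hilbert on $B_K$ plus \eqref{inverseDK} only produces factors $(\delta/h)^{1/2}$ or $\delta/h^{3/2}$. That holds for the pairings in which $\pv$ enters through its gradient, but not for those in which it enters in $L^2$.

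For the strip term, your ingredients give $\|\Delta w\|_{0,\DK}\|\pv\|_{0,\DK}\lesssim h^{k-1}|u|_{k+1,B_K}(\delta/h)^{1/2}\|\pv\|_{0,K}=h^k(\delta/h^3)^{1/2}|u|_{k+1,B_K}\|\pv\|_{0,K}$. Under \eqref{condmesh}, $(\delta/h^3)^{1/2}$ can be as large as $\tau^{1/2}h^{-3/4}$. Even with a boundary-layer bound $\|\pv\|_{0,\ThN}\lesssim h^{1/2}\Norm{v}$ you remain short by $h^{-1/4}$. The vertex terms produce the same bad factor if you only use \eqref{inverseDK}, since $\|\nabla w\|_{0,\etax}\lesssim h^{k-1/2}|u|_{k+1}$ and $\|\pv\|_{0,\etax}\lesssim\delta^{1/2}h^{-1}\|\pv\|_{0,K}$.

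The vertex terms need two extra ingredients, which the paper uses. At Neumann--Neumann vertices, use $[\pv]=[\pv-v]$ as in \eqref{boundjumppv}, which gives $\|[\pv]\|_{0,\etax}\lesssim\delta^{1/2}|v|_{1,K^+\cup K^-}$. At Dirichlet/Neumann vertices, use the Poincar\'e bound on the Dirichlet-side element, $\|\pv\|_{0,K^-}\lesssim h|\pv|_{1,K^-}+h^{1/2}\|\pv\|_{0,E^-}$; the last term is absorbed by the $h^{-1}\|v\|_{0,\GhD}^2$ part of $\Norm{\cdot}$. Both give $(\delta/h)^{1/2}h^k$.

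For the strip term you must not enlarge $\DK$ to $B_K$. Instead, apply a thin-layer inequality $\|g\|_{0,\DK}\lesssim\delta^{1/2}\|g\|_{0,E}+\delta|g|_{1,\DK}$ twice:
\begin{itemize}
\item With $g=D^2w$, a trace inequality on $K$ gives $\|D^2 w\|_{0,\DK}\lesssim\delta^{1/2}h^{k-3/2}|u|_{k+1}$ locally. This needs $k\ge 2$; for $k=1$ you need slightly more than $H^2$, e.g.\ $\|u\|_{2,\infty}$.
\item With $g=\pv$, combine it with $\sum_{E\subset\GhN}\|\pv\|_{0,E}^2\lesssim\Norm{v}^2$, which follows from a trace inequality on $\Oh$ and the Poincar\'e bound.
\end{itemize}
The product, summed over $E$, is $(\delta/h^{3/2})\,h^k|u|_{k+1,\O}\Norm{v}$. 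This is where $\delta_N\le\tau h^{3/2}$ actually enters the consistency estimate. The paper's displayed chain is equally terse at this point: it passes through $\sqrt{\delta/h}\,|\pu-u|_{2,\Kt}\|\pv\|_{0,K}$, which by itself does not yield $h^k$. The same repair is needed there.
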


\begin{proof}
	We recall that $\int_{\Th} \nabla \pu\cdot \nabla \pv = \int_{\Th} \nabla \pu\cdot \nabla v$. Then
	
	\begin{multline}\ah (u,v) =	\underbrace{\int_{\Th} \nabla u \cdot \nabla v - \int_{\Gh} \dnh{u} \, v }_{= -\int_{\Th} \Delta u v = \int_{\Th} f v} + \int_{\Th} \nabla (\pu - u) \cdot \nabla v +  \beta
		s_h(u-\pu,v-\pv) \\- \int_{\Gh} \dnh{(\pu - u)} \, v  + \underbrace{\int_{\GhD}  \widetilde u \Big(\dnh{\pv} + \gamma h^{-1} \hE(\pv) \Big) }_{=\int_{\GhD} \widetilde g (\dnh{\pv} + \gamma h^{-1} \hE(\pv))} +  \int_{\GhD}  (\E(\pu) - \widetilde u) \dnh{\pv}  
		\\
		+ \gamma h^{-1} \int_{\GhD} (\E(\pu) - \widetilde u) \hE (\pv)\\
    + \underbrace{\int_{\GN} \dn{u}  \pv }_{\int_{\GN} g^N \pv}
		+ \int_{\GN} \dn{(\pu - u)}  \pv 
		+ \switch \sum_{\x  \in \XN}\int_{\etax}  [\nabla( \pu - u)\cdot \taux ]  \{ \pv \}.
	\end{multline}
	Then
	\begin{multline}\label{boundABC}
    A_h(u,v) - F_h(v) =  \int_{\Oh} f v - f \Pi^0_{k-2} v +
		\underbrace{ \int_{\Th} \nabla (\pu - u) \cdot \nabla v +  \beta
			s_h(u-\pu,v-\pv)}_A \\  \underbrace{- \int_{\GhD} \dnh{(\pu - u)} \, v +  \int_{\GhD}  (\E(\pu) - \widetilde u) \dnh{\pv}  
			+ \gamma h^{-1} \int_{\GhD} (\E(\pu) - \widetilde u) \hE (\pv)}_B\\
		\underbrace{- \int_{\GhN} \dnh{(\pu - u)} \, v	+ \int_{\GN} \dn{(\pu - u)}  \pv 
			+ \switch \sum_{\x  \in \XN}\int_{\etax}  [\nabla( \pu - u)\cdot \taux ]  \{ \pv \}}_C.\\
	\end{multline}
	The terms $A$ and $B$ have been bounded in \cite{VEM_curvo}: we have
\begin{equation}\label{boundA}
A \leq A^*(h) h^k | u  |_{k+1,\O} \Norm v
\end{equation}
	and 
	\begin{equation}\label{boundB}
	| B | \lesssim  (h^k | u |_{k+1,\O}  + h^{-1/2} \delta^{k+1} \| u \|_{k+1,\infty,\Oh} )(| v |_{1,\O} + h^{-1/2}\| \pv \|_{0,\GhD}).
	\end{equation}

	Let us bound $| C |$. As in the proof of Lemma \ref{lem:boundI} we have that 
	\begin{multline}
		C = - \int_{\GhN} \dnh{(\pu - u)} \, (v - \pv)	 \\
		- \int_{\GhN} \dnh{(\pu - u)} \, \pv	+ \int_{\GN} \dn{(\pu - u)}  \pv 
		\\+  \sum_{\x  \in \XN}  \left(
		\int_{\etax} \nabla (\pu^+ - u^+)\cdot \taux^+ \pv^+ 
		+ \int_{\etax} \nabla (\pu^- - u^-) \cdot \taux^- \pv^- 
		-  \int_{\etax} \nabla \{\pu - u \} \cdot \taux \jump{\pv}  	\right) \\
		= - \int_{\GhN} \dnh{(\pu - u)} \, (v - \pv) + \sum_E 
		\left(\int_{\DK}\nabla (u - \pu) \cdot \nabla \pv + \int_{\DK} \Delta (u-\pu) \pv \right)
		\\+ \sum_{\x \in \XN} \int_{\etax }\nabla \{\pu - u \}\cdot\taux \jump{\pv} + \sum_{\x \in \DNnodes} \int_{\etax} \nabla (\pu^- - u^-) \cdot \taux^- \pv^-\\
		\lesssim
		\| \dnh{(\pu - u)} \|_{0,\Gh} \| v - \pv \|_{0,\Gh}  + \sum_E \Big(| \pu - u |_{1,\DK} | \pv |_{1,\DK} + | \pu - u |_{2,\DK} \| \pv \|_{0,\DK} \Big) \\
		+ \sum_{\x\in \XN}  \| \nabla \{ \pu - u\} \|_{0,\etax} \| \jump \pv \|_{0,\etax}  + \sum_{\x \in \DNnodes} \| \nabla(\pu^- - u^-)\cdot \etax^- \|_{0,\etax} \| \pv^- \|_{0,\etax}\\
		\lesssim 
		\| \dnh{(\pu - u)} \|_{0,\Gh} \| v - \pv \|_{0,\Gh}  + \sum_E \Big(| \pu - u |_{1,\DK} | \pv |_{1,\DK} + | \pu - u |_{2,\DK} \| \pv \|_{0,\DK} \Big)\\
		+ \sum_{\x\in \XN} \| \nabla \{\pu - u \} \|_{0,\etax} \| \jump{\pv} \|_{0,\etax} + \sum_{\x \in \DNnodes} \| \nabla (\pu-u) \|_{0,\etax} \| \pv^- \|_{0,\etax}
		\\	\lesssim 
		\Big(
		\| \nabla (\pu - u) \|_{0,\ThN} + h \| \nabla (\pu - u) \|_{1,\ThN}
		\Big) | v - \pv |_{1,\ThN}
		\\
		+  \sqrt{\frac \delta h} \sum_{K\in \ThN} \Big(| \pu - u |_{1,\Kt} | \pv |_{1,\KE } + | \pu - u |_{2,\Kt} \| \pv \|_{0,K} \Big)\\
		+ \Big(h^{-1/2}\| \nabla (\pu-u) \|_{0,\ThN} + h^{1/2}\| \nabla (\pu-u) \|_{1,\ThN}\Big) \sqrt{\delta} | v |_{1,\ThN}  \\
		+ \sqrt{\frac\delta h } \sum_{\x \in \DNnodes}    \Big(
		| \pu - u |_{1,K_\x^-} + h | \nabla (\pu - u ) |_{1,K_\x^-}   \Big)
		\Big(
		| \pv |_{1,K_\x^-}  + h^{-1/2} \| \pv\|_{0,E_\x^-}
		\lesssim 
		h^k | u |_{k+1,\O} \Norm{ \pv }.
	\end{multline}
	That is
	\begin{equation}\label{boundC}
	| C  | \lesssim h^k | u |_{k+1,\O} \Norm{ \pv } \lesssim h^k | u |_{k+1,\O} \Norm{ v }.
	\end{equation}
	Using \eqref{boundA}, \eqref{boundB} and \eqref{boundC} in \eqref{boundABC} yields the desired result.
\end{proof}

\NOTE{
	Proof of the corollary. We have
	\begin{multline*}
		\Norm{u_h - v_h} \lesssim \ah(u_h - v_h,u_h - v_h)
		= \ah(u_h - u,u_h - v_h) + a(u - v_h,u_h - v_h) \\=
		\fh(u_h - v_h) - \ah(u,u_h - v_h) + \Norm{u - v_h} \Norm{u_h - v_h}	\end{multline*}
	
}

\NOTE{
	We have (see \eqref{boundjumppv})
	\[	\| \jump{\pv} \|_{0,\etax}
	\lesssim \sqrt{\delta} | v |_{1,K^ + \cup K^-}\]
	Moreover, recalling that for $\x \in \DNnodes$, $E^- \in \partial K^-$ is on the Dirichlet side, we have
	\begin{multline*}
		\| \nabla(\pu^--u^-) \|_{0,\etax} \| \pv \|_{0,\etax} \lesssim
		\| \nabla(\pu^--u^-) \|_{0,\partial \Kt^-} \sqrt{\frac \delta {h^2} } \| \pv \|_{0,K^-}\\
		\lesssim  \sqrt{\frac\delta h }     \Big(
		| \pu^- - u^- |_{1,K^-} + h | \nabla (\pu^- - u^- ) |_{1,K^-}   \Big)
		\Big(
		| \pv |_{1,K^-}  + h^{-1/2} \| \pu \|_{0,E^-}
		\Big)
	\end{multline*}
	
}

\NOTE{
	I look at $\PinablaK$ as  $\PinablaK : H^1(\Kt) \to \Poly{k}(\tK)$.  I have
	\[
	\| \PinablaK(u) \|_{1,\Kt} \lesssim \| \PinablaK(u) \|_{1,K} \lesssim \| u \|_{1,K} \lesssim \| u \|_{1,\Kt}.
	\]
	Then $\PinablaK$ is bounded from $H^1(\Kt)$ to $H^1(\Kt)$ and preserves polynomials. The
	\[
	\| u - \PinablaK u \|_{1,\Kt} = \| u - p - \PinablaK(u - p) \|_{1,\Kt} \lesssim \| u - p \|_{1,\Kt} \quad \Longrightarrow 
	\quad		\| u - \PinablaK u \|_{1,\Kt} \lesssim \inf_{p\in \Poly{k}} \| u - p \|_{1,\Kt}.
	\]
	
	\begin{multline*}
		\|  \nabla(u - \PinablaK u) \|_{1,\Kt}  = 
		\| \nabla  u - \mathbf{p} \|_{1,\Kt} + \| \mathbf{p}- \nabla \PinablaK u \|_{1,\Kt}
		\\[4mm]
		\lesssim h^{k-1} | u |_{k+1,\Kt} + h^{-1}\| \mathbf{p}- \nabla \PinablaK u \|_{0,K}	\\[4mm]
		\lesssim h^{k-1} | u |_{k+1,\Kt} + h^{-1}\| \mathbf{p} - \nabla  u \ \|_{0,K}	+ h^{-1}\|  \nabla(u - \PinablaK u)  \|_{0,K} 
		\\[4mm]
		\lesssim h^{k-1} | u |_{k+1,\Kt} + h^{-1}\| \mathbf{p} - \nabla  u \ \|_{0,\Kt}	+ h^{-1}\|  \nabla(u - \PinablaK u)  \|_{0,K} \lesssim h^{k-1} | u |_{k+1,\Kt}.
	\end{multline*}
	where I used Aubin-Nitsche on $\Kt$.Everything should work, and yield
	\[
	\|  \nabla(u - \PinablaK u) \|_{1,\Kt}  \lesssim h^{k-1} | u |_{k+1,\Kt}
	\]
	
}

%
%
%
%
%
%
%
%
%

\section{Numerical tests}\label{sec:numerical}
 
We tested the proposed method on problem \eqref{contpb}, with $\Omega$ being the circle of center $(0.5,0.5)$ and radius $0.5$. The boundary $\partial \Omega$ is split as $\partial \Omega = \bar\Gamma^D \cup \bar \Gamma^N$ with $\GD = \{ (x,y) \in \partial \Omega: y < 0.5\}$ and $\GN = \{ (x,y) \in \partial \Omega: y > 0.5\}$. We take $f$, $g^D$ and $g^N$ in such a way that the solution $u$ is the Franke function \cite{franke1979critical}
\begin{multline}
	u(x,y):=\frac 3 4 e^{-\left(
		(9x-2)^2+(9y-2)^2
		\right)/4} + \frac 3 4 e^{-
		\left(
		(9x+1)^2/49 + (9y+1)/10\right)}\\
	+ \frac 1 2 e^{-
		\left(
		(9x-7)^2 + (9y-3)^2	
		\right)/4
	}
	+\frac 1 5 e^{-
		\left(
		(9x-4)^2 + (9y-7)^2
		\right)	
	} . \label{franke}
\end{multline} 
We tested the method proposed with $\delN = \delD = h / 2^{n_\text{ref}}$, for different values of $n_\text{ref}$, $\beta$ and $\gamma$.
\added[id=mm]{We used the orthonormal polynomial basis described in \cite{Mascotto_illcond}.}
Letting $u_h$ denote the discrete solution obtained by the order $\p$ VEM method proposed in the previous section, for all the tests we consider the  relative error in the $H^1(\Th)$ seminorm, as well as in the $L^2(\Oh)$ norm. For the VEM case, these are, as usual, approximated as
\begin{align}\label{eS}
	e^u_1 &:= \frac{
		\| \nabla u - \Pi^0_{k-1}(\nabla u_h) \|_{0,\Oh}
	}{| u |_{1,\Oh}}, & e^u_0 &:= \frac{\| u - \Pi^0_{k}  u_h\|_{0,\Oh}}{\| u \|_{0,\Oh}}.
\end{align}
For both test cases, we consider values of $k$ between $1$ and $6$.

\subsection{DOFs vs Mesh Size}
\label{sec:dofs}
We start by showing, 
in Figure~\ref{fig:dofs-per-k}, the relationship between the number of active degrees of
freedom (DOFs) and the mesh size $h$ for different polynomial order $k$ and different refinement levels $n_\text{ref}$.  As we can see, we consistently have $\text{DOFs} \sim h^{-2}$ and increasing the refinement level (or equivalently, decreasing $\delta$) does not imply an increase of the number of degrees of freedom.

\begin{figure}[htbp]
	\centering
	\includegraphics[width=\textwidth]{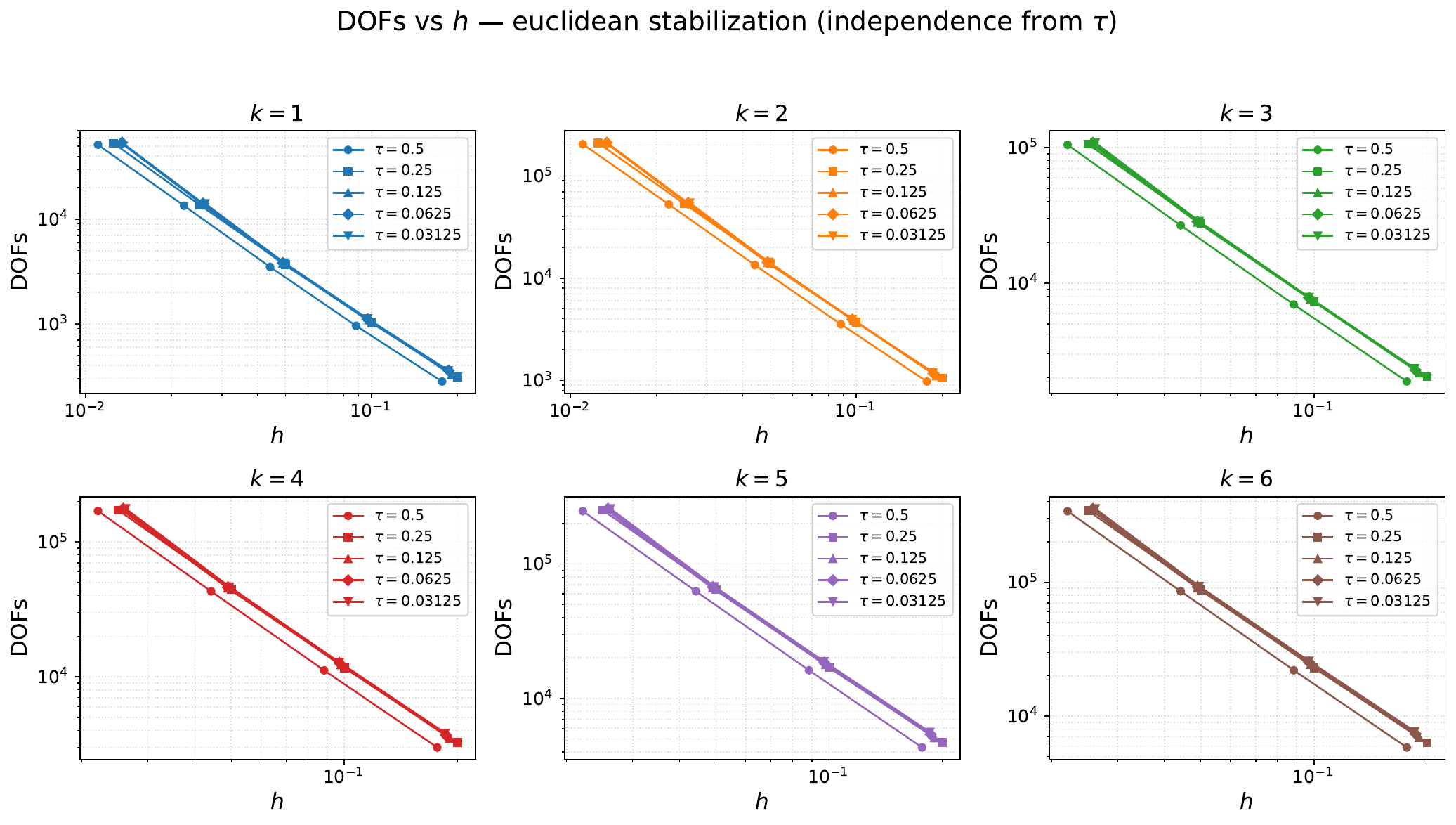}
	\caption{DOFs vs $h$ for each $k$, with different $\tau$ values
		overlaid (euclidean stabilization, $\gamma=100$, $\beta=1$).
		The curves for different $\tau$ collapse, confirming that the
		number of DOFs is independent of $\tau$.}
	\label{fig:dofs-per-k}
\end{figure}

\subsection{Convergence for fixed $n_\text{ref}$}
In Figures~\ref{fig:H1-comparison-stab} and~\ref{fig:L2-comparison-stab}, we plot the $H^1$ and $L^2$ errors as a function of the meshsize $h$ for different walues of $\tau = \delta/h$. We display the results for two values of the VEM stabilization parameter $\beta$: $\beta = 1$ and $\beta = 1000$.  In both cases two stabilizations strategies are considered: the standard {\em dofi-dofi} (euclidean) stabilization \added[id=mm]{\cite{basicVEM}} and the new robust stabilization proposed in Section \ref{sec:robust-stabilization}. The numerical results are in agreement with the theoretical ones. If $\tau$ is not small enough we observe a degradation in the convergence of the method. We also observe that, for $\beta = 1$, the two stabilization strategies yield fundamentally similar results.

\begin{figure}[htbp]
	\centering
	\begin{subfigure}[b]{\textwidth}
		\includegraphics[width=\textwidth]{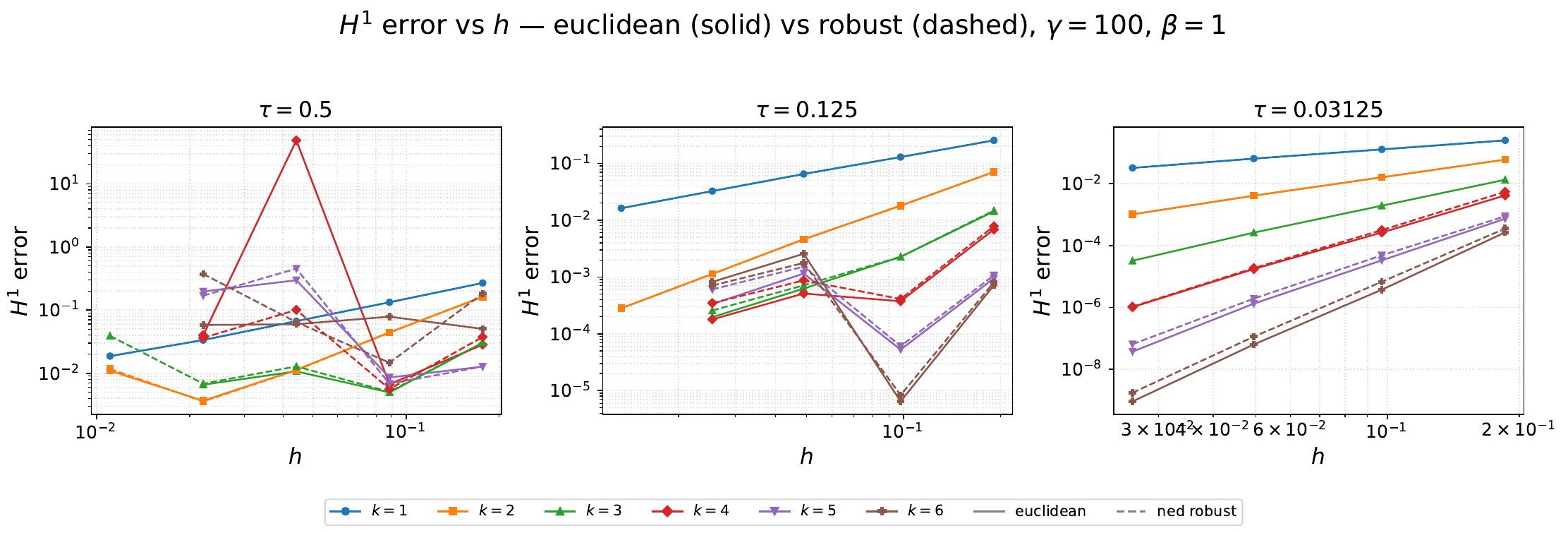}
	\end{subfigure}\\[6pt]
	\begin{subfigure}[b]{\textwidth}
		\includegraphics[width=\textwidth]{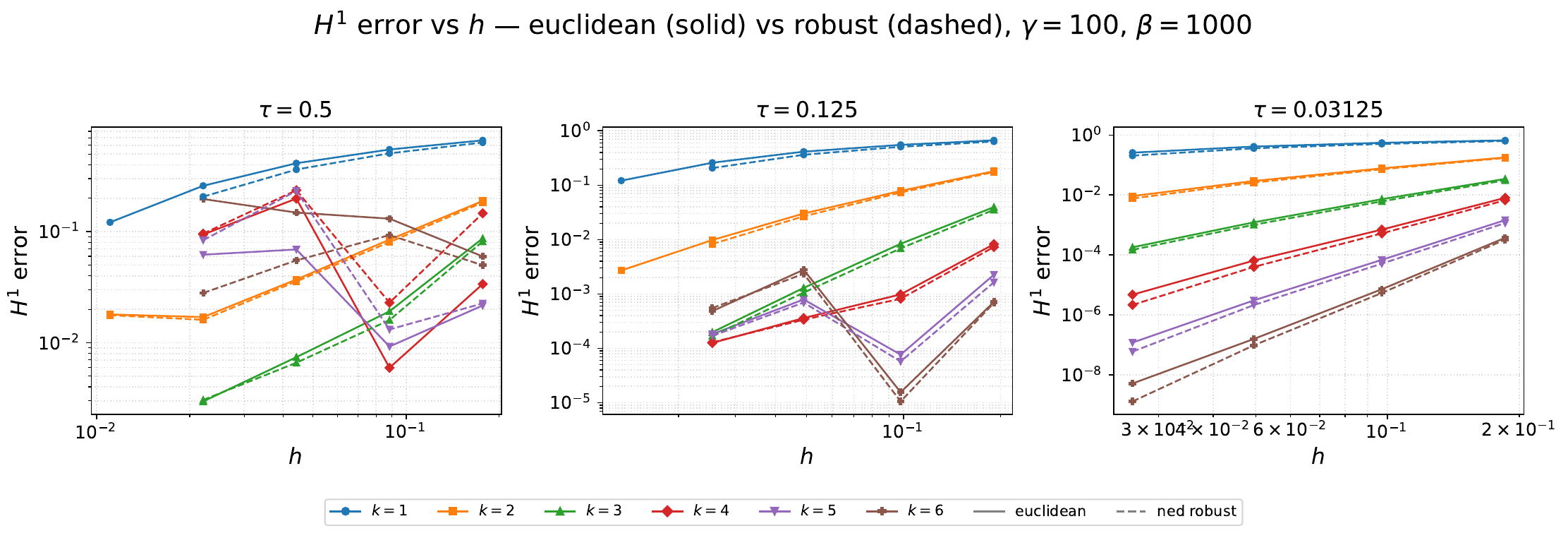}
	\end{subfigure}
	\caption{$H^1$ error: euclidean (solid) vs robust (dashed), subplots
		by $\tau$.  The advantage of the robust stabilization is clearly
    visible in the small-$\tau$ panels for $\beta=1000$ \added[id=dp]{as $h$ decreases}.}
	\label{fig:H1-comparison-stab}
\end{figure}

\begin{figure}[htbp]
	\centering
	\begin{subfigure}[b]{\textwidth}
		\includegraphics[width=\textwidth]{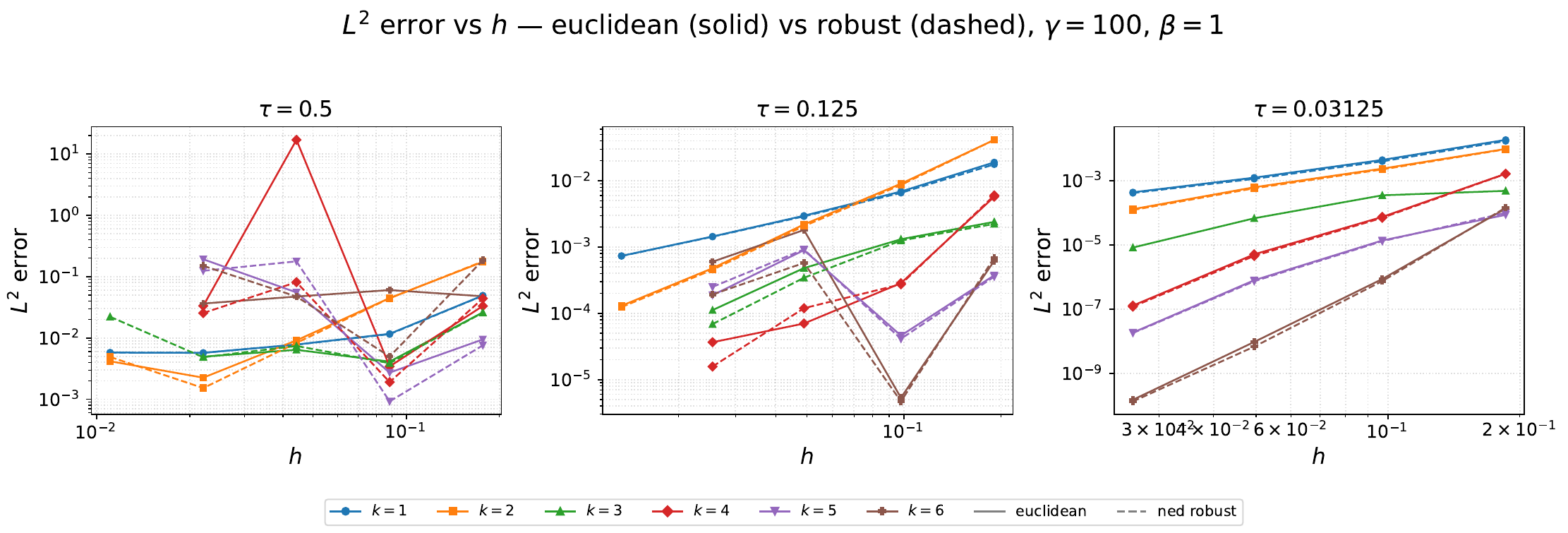}
	\end{subfigure}\\[6pt]
	\begin{subfigure}[b]{\textwidth}
		\includegraphics[width=\textwidth]{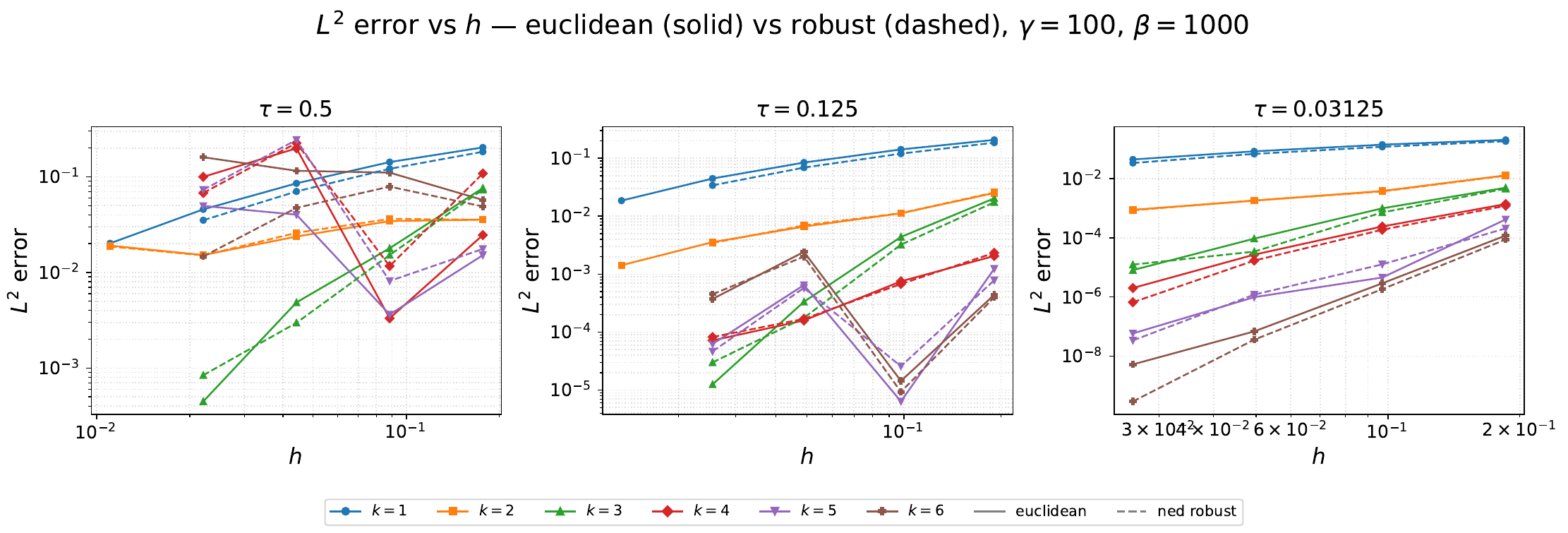}
	\end{subfigure}
	\caption{{$L^2$ error: euclidean (solid) vs robust (dashed), subplots
		by $\tau$.  For  $\beta=1000$, we generally observe a higher stability of the robust stabilization.}}
	\label{fig:L2-comparison-stab}
\end{figure}

\subsection{Convergence for optimal choice of $\tau$}

In Figure~\ref{fig:H1-ovl-sig05}, we finally plot the results obtained by choosing a sequence of meshes satisfying a condition of the form \eqref{condmesh}. To this aim, as $h$ decreases we  choose elements of diameter $H$ which are agglomerates of squared elements of meshes of mesh size $h$ with $h = H / 2^{N_\text{ref}}$, $N_\text{ref}$ chosen in such a way that
\[
\tau = \frac{h}{H^{3/2}} = \frac{2^{-N_\text{ref}}}{H^{1/2}} \leq \sigma.
\]
Once again we report the resulting convergence plots for $k =1,\dots,6$, and two values of the stabilization parameter $\beta$ ($\beta = 1$ and $\beta=1000$), with $\sigma=0.5$. While the higher stability obtained for the robust stabilization \eqref{stabneumann} is apparent for higher polynomial degrees and $\beta = 1000$, for $\beta = 1$ the two stabilizations essentially behave comparably, making the simpler euclidean stabilization preferable.
\begin{figure}[htbp]
	\centering
	\begin{subfigure}[b]{\textwidth}
		\includegraphics[width=\textwidth]{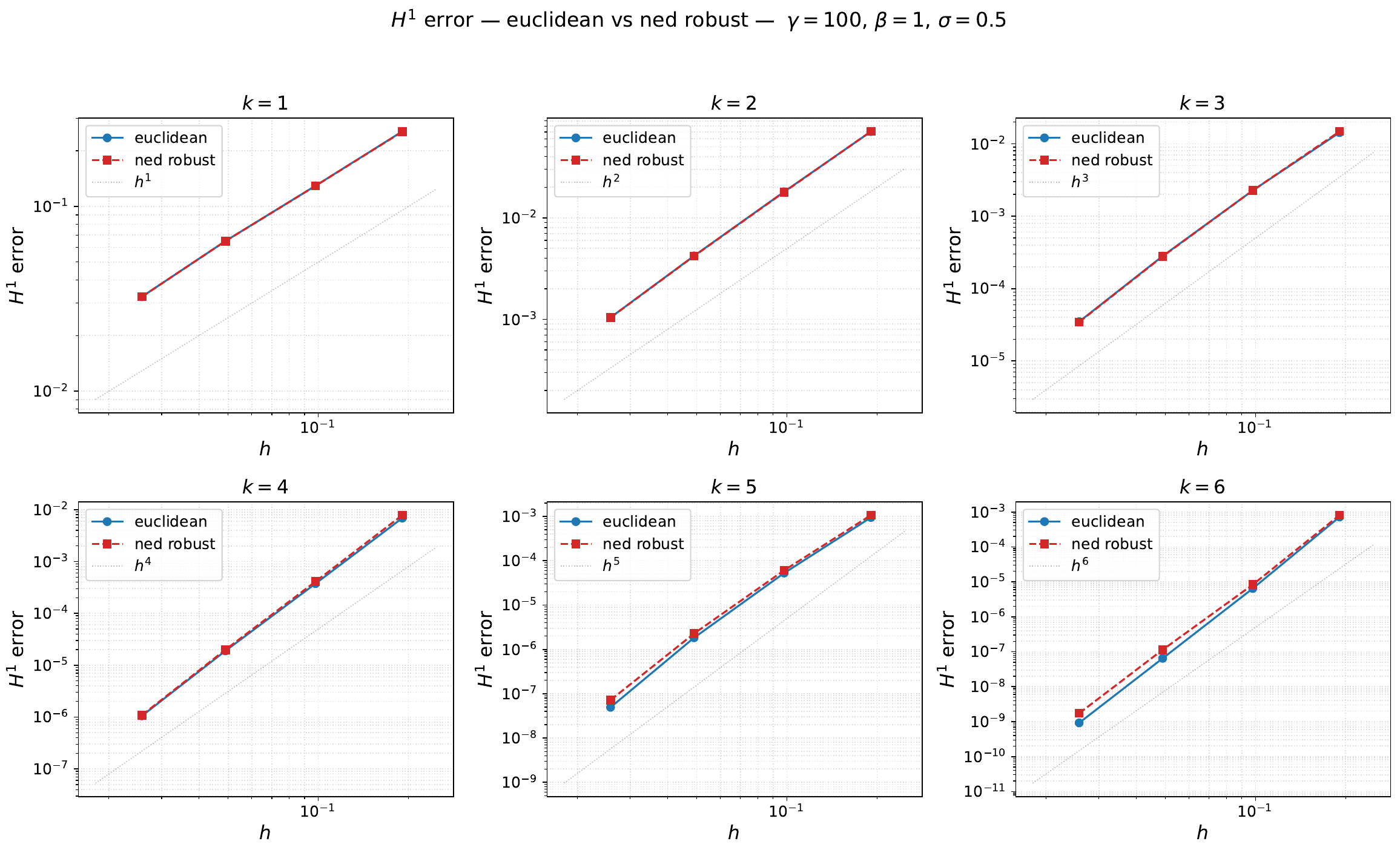}
	\end{subfigure}\\[6pt]
	\begin{subfigure}[b]{\textwidth}
		\includegraphics[width=\textwidth]{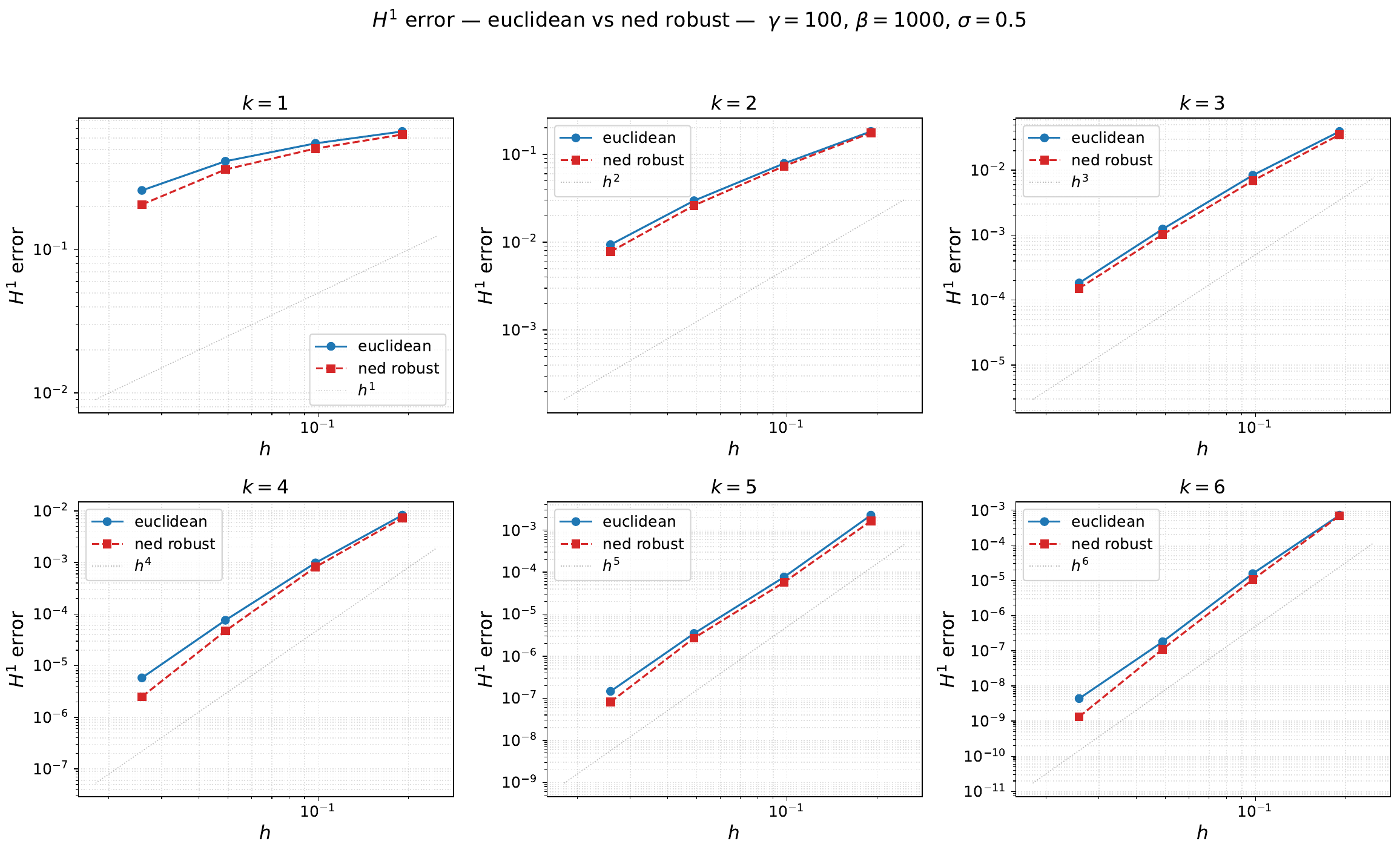}
	\end{subfigure}
	\caption{$H^1$ error, $\sigma=0.5$ — overlay:
		euclidean (blue, solid) vs ned robust (red, dashed).}
	\label{fig:H1-ovl-sig05}
\end{figure}

\NOTE{Le figure di Fig \ref{fig:H1-ovl-sig05} sono generate col codice /Users/bertoluzza/Dropbo
	x/ARTICOLI/TrabecularBone/DominiSquadrettati/Neumann/Figure/generate\_plots\_stab\_comparison.py}

\subsubsection{Condition numbers}
We conclude our experiments by comparing the condition numbers of the stiffness matrices arising in our method. Here we clearly see the lack of stability for higher order method when $\tau = h/H$ is too large. For $k=6$ and $\beta = 1$ the condition number for $\tau=.5$ is of the order of $10^{18}$ while it decreases to $10^9\sim 10^{10}$ for $\tau = 0.03125$. A similar behaviour can be observed for $\beta = 1000$ where, however, the superiority of the robust stabilization \eqref{stabneumann} can be observed. For $k=6$, both stabilizations result in a condition number of the order of $10^{18}$ for $\tau = 0.5$, while for $\tau = 0.03125$ the condition number for the robust stabilization is around $10^{11}$, one order of magnitude lower than the condition number for the euclidean stabilization.
\begin{figure}[htbp]
	\centering
	\begin{subfigure}[b]{\textwidth}
		\includegraphics[width=\textwidth]{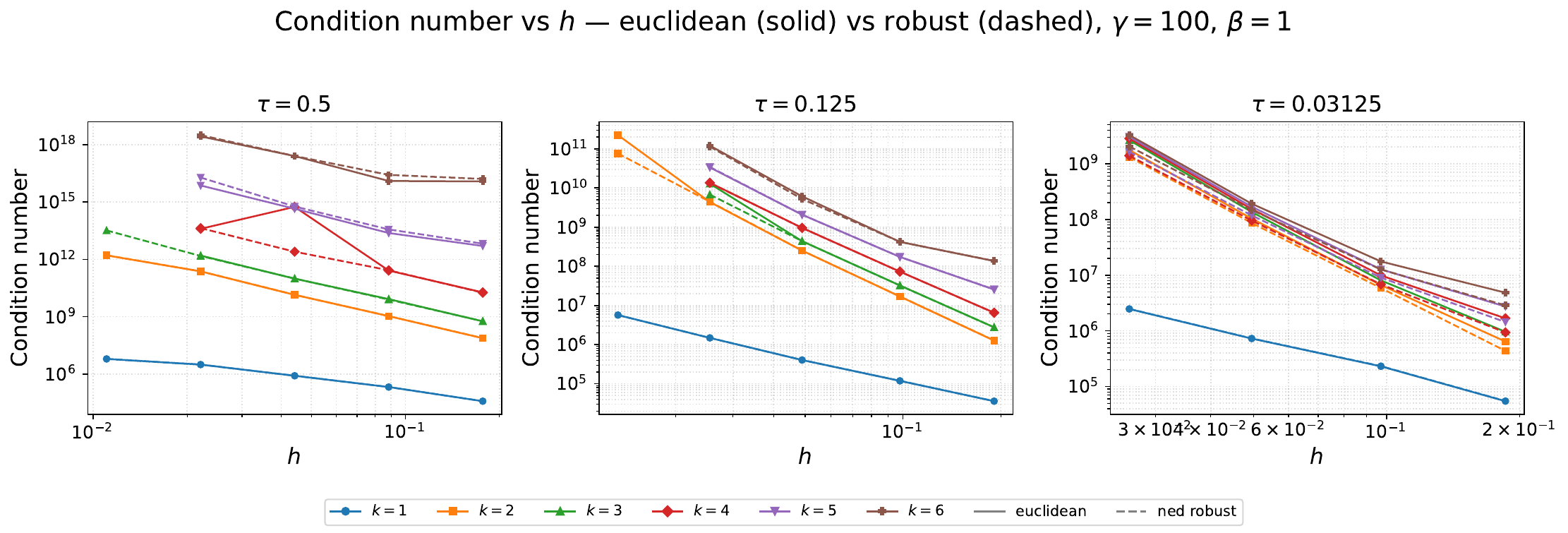}
	\end{subfigure}\\[6pt]
	\begin{subfigure}[b]{\textwidth}
		\includegraphics[width=\textwidth]{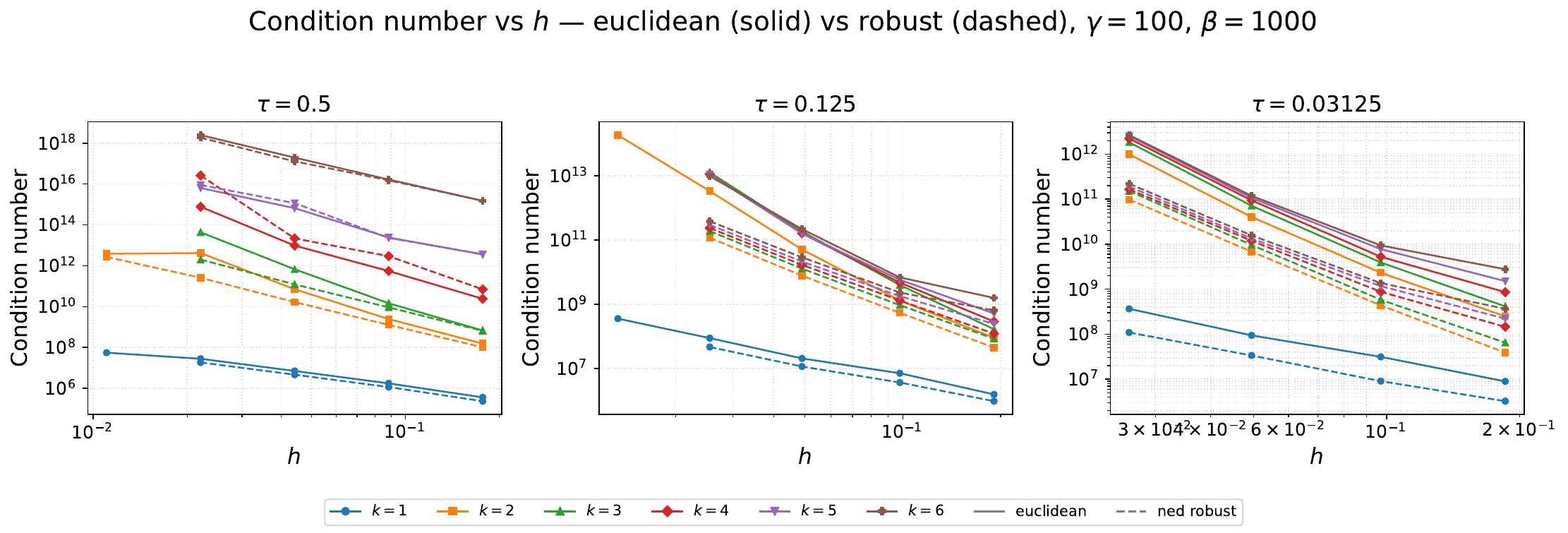}
	\end{subfigure}
	\caption{Condition number: euclidean (solid) vs robust (dashed),
		subplots by $\tau$.  This directly shows the conditioning advantage
		of the robust stabilization, especially for large values of $\beta$.}
	\label{fig:cond-comparison-stab}
\end{figure}

\bibliographystyle{amsplain}

\bibliography{biblio}

\appendix
\section{Proof of Lemma \ref{embedding}}\label{sec:embedding}
We recall that we have
\[
\| f \|_{H^s_0(\domain)}^2 \lesssim  \| f \|^2_{H^{s}(\domain)}  +   \frac 1 s \int_\domain \,d\sigma \frac{| f(\sigma) |^2}{\dist(\sigma,\partial\domain)^{2s}}, \]
where the constant in the inequality is independent of $\domain$. 
\NOTE{It is easy to see that the constant in the above inequality is independent of $\domain$ and on $s$: indeed, we have that $s > 0$ $\lra$ $a^s > 1$ ($a > 1$) $\lra$ $(1/a)^s = 1/a^s < 1$ 
	\begin{multline*}
		\int_\domain \,d\sigma \int_{\mathbb{R}^2\setminus\domain} \,d\tau \frac{| f (\sigma) |^2} {| \sigma - \tau|^{2s+2}} \leq 
		\int_\domain \,d\sigma | f (\sigma) |^2 \int_{\mathbb{R}^2\setminus B(\sigma,\dist(\sigma,\partial\domain))} \,d\tau \frac{1} {| \sigma - \tau|^{2s+2}}		\\
		= 	\int_\domain \,d\sigma | f (\sigma) |^2 \int_{\mathbb{R}^2\setminus B(0,\dist(\sigma,\partial\domain))} \,d\tau \frac{1} {|  \tau|^{2s+2}}	\\
		= 	\int_\domain \,d\sigma | f (\sigma) |^2 \int_{\dist(\sigma,\partial\domain)}^{\infty}\,d r \int_0^{2\pi} r d\theta r^{-(2s+2)}\\
		= 	\int_\domain \,d\sigma | f (\sigma) |^2 \int_0^{2\pi}  d\theta  \int_{\dist(\sigma,\partial\domain)}^{\infty}\,d r r^{-2s-1}
		\\	= \frac{\pi}s 	\int_\domain \,d\sigma | f (\sigma) |^2 \dist(\sigma,\partial\domain)^{-2s}.
\end{multline*}}
%
%
%

\NOTE{We have
	\[
	\lim_{s\to 0} \frac{a^s}{s} =
	\lim_{s\to 0} \frac {(e^{\log a})^s}s = \lim_{s\to 0} \frac {(e^{s \log a})}s 
	\]}

We then need to bound the second term on the  right hand side. We easily see that
\[
\int_\domain \,d\sigma \frac{| f(\sigma) |^2}{\dist(\sigma,\partial\domain)^{2s}} \leq \| f \|^2_{L^\infty(\domain)} C(s,\domain)
\]
with 
\[
C(s,\domain) =	\int_\domain \,d\sigma \frac{1}{\dist(\sigma,\partial\domain)^{2s}}
\]
being a constant depending on $s$ and on $\domain$. To bound such a constant, we introduce a Besicovitch cover of $\domain$ defined as follows
\[
\Besicovich = \{B(\sigma,r(\sigma)), \sigma \in \domain, \text{ with } r(\sigma) = \dist(\sigma,\partial\domain)/2\},
\]
where $B(\sigma,r(\sigma))$ is the ball centered at $\sigma$ with radius $r(\sigma)$. By the Besicovitch covering theorem there exist countable subsets $\Besk$, $k = 1, \dots, N$, $N$ being a constant only depending on the space dimension (which in our case is $2$), such that for all $k$ the elements of $\Besk$ are disjoint and 
\(
\domain \subseteq \bigcup_k \bigcup_{B\in \Besk} B
\). This implies that
\[
\one_\domain \leq \sum_k 
\sum_{B\in \Besk} \one_B.
\]
Then we can write
\[
C(s,\domain) \leq \sum_k \sum_{B\in \Besk}	\int_B \,d\sigma \frac{1}{\dist(\sigma,\partial\domain)^{2s}}.
\]
We now observe that, by construction of $\Besicovich$, for all $\sigma \in B$,  $B \in \Besicovich$ we have that
\[
\dist(\sigma,\partial\domain) \simeq r_B.
\]
Then we can write
\[
C(s,\domain) \lesssim \sum_k \sum_{B\in \Besk}	\int_B \,d\sigma \frac{1}{r_B^{2s}} \lesssim  \sum_k \sum_{B\in \Besk}	r_B^{2-2s}.
\]
Let us now split the family $\Besk$ as $\Besk = \bigcup_{j=j_0} \Beskj$ with 
\[
  \Beskj = \{ B \in \Besk: \  2^{-j-1} \leq r_B < 2^{-j}\},
\]
$j_0$ being the first index for which $\bigcup_k \Beskj \ne\emptyset$. We see that $j_0 \simeq |\log_2(\rho_G)|$.
We have
\[
\sum_{B\in \Besk}	r_B^{2-2s} \lesssim  \sum_{j=j_0 }^\infty\#(\Beskj) 2^{-j(2-2s)}.
\]
To estimate the cardinality of $\Beskj$ we observe that all the (disjoint) balls in $\Beskj$ are contained in a strip $S_j$ of thickness $\sim 2^{-j}$ adjacent to the boundary of $\domain$. We see that $| S_j | \simeq | \partial\domain| 2^{-j}$. Then
\[
  \#(\Beskj) 2^{\added[id=dp]{-}2j} \lesssim | \bigcup_{B \in \Beskj}B | \leq | S_j | \lesssim  | \partial\domain| 2^{-j}.
\]
which yields
\[
\#(\Beskj) \lesssim  | \partial\domain|  2^j.
\]
Then
\[
C(s,\domain) \lesssim N  | \partial\domain|  \sum_{j=j_0}^
\infty 2^j 2^{-j(2-2s)} \lesssim  | \partial\domain | 2^{-j_0(1-2s)} \sum_{j=0}^\infty 2^{-(1-2s)j} \lesssim  | \partial\domain |  \rho_\domain^{1-2s} \frac 1 {1-2s},
\]
and
\[
\| f \|_{H^s_0(\domain)}^2 \lesssim  \| f \|^2_{H^s(\domain)}  + 
| \partial\domain |  \rho_\domain^{1-2s} \frac 1 {s(1-2s)}  \| f \|^2_{L^\infty(\domain)}.  \]

\NOTE{
	\[
	\|p \|_{H^{1/2-\varepsilon}_0(\DK)}^2 \lesssim  | p |^2_{H^{1/2-\varepsilon}(\DK)}  + 
	| \partial\domain |  \rho_\domain^{2\varepsilon} \frac 1 {1-2s}  \| p \|^2_{L^\infty(\DK)}  
	\leq \frac \delta h | p |^2_{H^{1/2-\varepsilon}(K)}  + h \delta^{1 - 2\varepsilon} \frac 1 {\varepsilon} \| p \|^2_{L^\infty(K)}
	\]
}


\end{document}